\newtheorem{theorem}{Theorem}[section]
\newtheorem{lemma}[theorem]{Lemma}
\newtheorem{lemma-def}[theorem]{Lemma-Definition}
\newtheorem{cor}[theorem]{Corollary}
\theoremstyle{definition}
\newtheorem{question}[theorem]{Question}
\newtheorem{notation}[theorem]{Notation}
\newtheorem{remark}[theorem]{Remark}
\newtheorem{defn}[theorem]{Definition}
\numberwithin{equation}{theorem}
\newcommand{\CC}{\mathbb{C}}
\newcommand{\QQ}{\mathbb{Q}}
\newcommand{\ZZ}{\mathbb{Z}}
\newcommand{\calE}{\mathcal{E}}
\newcommand{\calF}{\mathcal{F}}
\newcommand{\calO}{\mathcal{O}}
\newcommand{\calP}{\mathcal{P}}
\newcommand{\bv}{\mathbf{v}}
\newcommand{\bw}{\mathbf{w}}
\newcommand{\bx}{\mathbf{x}}
\newcommand{\frakm}{\mathfrak{m}}
\newcommand{\frako}{\mathfrak{o}}
\newcommand{\dual}{\vee}
\DeclareMathOperator{\Ext}{Ext}
\DeclareMathOperator{\FIsoc}{\mathbf{F-Isoc}}
\DeclareMathOperator{\Gal}{Gal}
\DeclareMathOperator{\Hom}{Hom}
\DeclareMathOperator{\rank}{rank}
\DeclareMathOperator{\Spec}{Spec}
\begin{document}

\title{A cut-by-curves criterion for overconvergence of $F$-isocrystals}
\author{Thomas Grubb, Kiran S. Kedlaya, and James Upton}
\thanks{Kedlaya was supported by NSF (DMS-1802161, DMS-2053473) and UC San Diego (Warschawski Professorship).}

\maketitle

Throughout, let $k$ be a finite field of characteristic $p>0$ and let $X$ be a smooth, geometrically irreducible $k$-scheme.
For each prime $\ell \neq p$, one can consider \'etale cohomology of $X$ with $\overline{\QQ}_\ell$-coefficients, for which the category of lisse Weil sheaves 
provides a natural theory of coefficient objects of locally constant rank.
For $\ell = p$, one gets a satisfactory analogue only by replacing \'etale cohomology with rigid cohomology, which has \emph{two} different but related analogues of the category of lisse Weil sheaves: the category of \emph{convergent $F$-isocrystals} and the subcategory of \emph{overconvergent $F$-isocrystals}. Only the overconvergent category admits a cohomology theory with good finiteness properties, and it is these objects that stand in for lisse Weil sheaves in the theory of companions and compatible systems
\cite{kedlaya-companions1, kedlaya-companions2}. However, the convergent category is in many ways easier to work with, and the additional structures that it provides (e.g.,
the theory of slope filtrations) are themselves of great interest.

We are thus led to the question of determining whether a given convergent $F$-isocrystal is actually overconvergent. We prove a partial result towards \cite[Conjecture~5.17]{kedlaya-isocrystals}, which asserts that a convergent $F$-isocrystal is overconvergent if and only if its restriction to every smooth curve in $X$ is overconvergent (see also Question~\ref{Q:cut by curves}).
A result in this direction was previously given by Shiho \cite{shiho-curves} under an extra hypothesis not of geometric nature: it is required that when the isocrystal is realized as a vector bundle with integrable connection on the Raynaud generic fiber of some smooth lift of $X$, the data of the vector bundle and connection can be extended to some strict neighborhood of this rigid analytic space. The content of the theorem is then to establish that the result is indeed an overconvergent isocrystal, which is to say that the formal Taylor isomorphism converges on some strict neighborhood of the tube of the diagonal. (Shiho also assumes that $k$ is uncountable rather than finite, but this restriction is not essential to the method of proof; see the discussion in \cite[\S 5]{kedlaya-isocrystals}.)

Our main result in this direction (Theorem~\ref{T:extend}) imposes an extra hypothesis of a more geometric nature: we assume that that the restrictions to curves all have tame ramification after pullback from $X$ along some single dominant morphism. This condition is inherited from the study of \emph{companions} of \'etale and crystalline coefficient objects, where it is needed in order to control the cohomology groups of coefficient objects on curves
(see the introductions of \cite{kedlaya-companions1, kedlaya-companions2} for background).
Using this result, we first obtain a weaker form of our main result (Lemma~\ref{L:apply companions}) in which the overconvergent $F$-isocrystal does not necessarily restrict to the original convergent $F$-isocrystal, but does have matching characteristic polynomials. To upgrade this result, we make a careful study of the restrictions of convergent and overconvergent $F$-isocrystals to curves; a sample result is that two convergent $F$-isocrystals are isomorphic if and only if this is true after restriction to a ``large enough'' family of curves 
(Corollary~\ref{C:restriction to enough curves}).

\section{Convergent isocrystals}

We first recall some definitions and properties of convergent $F$-isocrystals, largely in order to set notation.

\begin{notation}
Throughout the paper, let $k$ be a finite field of characteristic $p>0$ and let $X$ be a smooth, geometrically irreducible $k$-scheme (as in the introduction).
Let $K$ denote the fraction field of $W(k)$, which is a finite extension of $\QQ_p$.
Let $X^\circ$ be the set of closed points of $X$.
Let $T_X$ be the tangent bundle of $X$, also viewed as a $k$-scheme via the structure morphism to $X$.
We typically write $U$ to denote an indeterminate open dense subscheme of $X$.
By a \emph{smooth curve in $X$}, we will always mean a locally closed subscheme of $X$ which is smooth of dimension 1 and geometrically irreducible over $k$.
\end{notation}

\begin{defn} \label{D:convergent F-isocrystal}
Let $\FIsoc(X)$ denote the category of convergent $F$-isocrystals on $X$; it is a $K$-linear abelian tensor category with unit object denoted by $\calO$.
As per  \cite[Definition~2.1]{kedlaya-isocrystals}, when $X$ is affine we may fix a smooth affine formal scheme $P$ over $W(k)$ with $P_k \cong X$ and a Frobenius lift $\sigma$ on $P$,
and then identify $\FIsoc(X)$ with the category of finite projective $\Gamma(P,\calO)[p^{-1}]$-modules equipped with an integrable $K$-linear connection and a horizontal
isomorphism with the $\sigma$-pullback.

For $\calE \in \FIsoc(X)$, let $H^i(X, \calE)$ denote the de Rham cohomology of $\calE$, equipped with its Frobenius action; 
we view $H^i(X, \calE)$ as a $K$-vector space with no specified topology. Define also
\[
H^0_F(X, \calE) := \Hom_{\FIsoc(X)}(\calO, \calE), \qquad H^1_F(X, \calE) := \Ext^1_{\FIsoc(X)}(\calO, \calE). 
\]
We then have a natural identification
\[
H^0_F(X, \calE) \cong H^0(X, \calE)^{\varphi}
\]
and a natural short exact sequence (Hochschild--Serre)
\[
H^0(X, \calE)_{\varphi} \to H^1_F(X, \calE) \to H^1(X, \calE)^{\varphi}.
\]
\end{defn}

\begin{lemma} \label{L:fully faithful}
For $\calE \in \FIsoc(X)$, the natural map $H^0_F(X, \calE) \to H^0_F(U, \calE)$ is an isomorphism. Consequently (by considering internal Homs in $\FIsoc(X)$),
the restriction functor $\FIsoc(X) \to \FIsoc(U)$ is fully faithful.
\end{lemma}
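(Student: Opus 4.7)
The plan is to establish the $H^0_F$ isomorphism by separately handling injectivity and surjectivity, and then deduce full faithfulness via internal Homs.

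First I would fix a smooth affine chart of $X$ with formal lift $P$ and Frobenius lift $\sigma$, identifying $\calE$ with a finite projective $B$-module $M$ (where $B = \Gamma(P, \calO)[1/p]$) equipped with a connection and Frobenius $\varphi$, and $\calE|_U$ with the base change $M_U$ to the analogous ring $B_U$. Injectivity is immediate: since $M$ is projective over the integral domain $B$ and $B \to B_U$ is injective, any horizontal Frobenius-invariant section of $\calE$ over $X$ that restricts to zero on $U$ is already zero.

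For surjectivity, given a nonzero $s \in H^0_F(U, \calE|_U)$, the associated morphism $\calO|_U \to \calE|_U$ in $\FIsoc(U)$ has image a rank-one sub-$F$-isocrystal $\calI \cong \calO|_U$ of $\calE|_U$. My plan is to form the ``closure'' $\calI' := \calI \cap M$ inside $M_U$, a rank-one torsion-free sub-$B$-module of $M$; passing to its reflexive hull $\calJ := (\calI')^{\vee\vee} \subseteq M$ produces an invertible sub-$B$-module of $M$ (since $B$ is regular Noetherian) that inherits stability under $\nabla$ and $\varphi$ by a direct closure argument and restricts to $\calI$ on $U$. Thus $\calJ$ is a rank-one sub-$F$-isocrystal of $\calE$ on $X$ extending $\calI$.

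Since $\calJ$ has rank one, its Frobenius slope is constant on the connected $X$; as $\calJ|_U \cong \calO|_U$ is unit-root, so is $\calJ$ itself. By Katz's equivalence between rank-one unit-root convergent $F$-isocrystals on $X$ and continuous characters of $\pi_1^{\et}(X)$, combined with surjectivity of $\pi_1^{\et}(U) \to \pi_1^{\et}(X)$ (since $U$ is open dense in the irreducible $X$), the character corresponding to $\calJ$ must already be trivial on $\pi_1^{\et}(X)$, whence $\calJ \cong \calO$ in $\FIsoc(X)$. Composing this isomorphism with the inclusion $\calJ \hookrightarrow \calE$ yields a morphism $\calO \to \calE$ whose restriction to $U$ recovers $s$ up to a scalar in $\QQ_p^\times$, which can be absorbed.

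Full faithfulness then follows by applying the $H^0_F$ isomorphism to the internal Hom $\calE^\vee \otimes \calF$: for $\calE, \calF \in \FIsoc(X)$ we have $\Hom_{\FIsoc(X)}(\calE, \calF) = H^0_F(X, \calE^\vee \otimes \calF)$, which is identified with $H^0_F(U, (\calE^\vee \otimes \calF)|_U) = \Hom_{\FIsoc(U)}(\calE|_U, \calF|_U)$. The main obstacle is verifying that $\calJ$ is genuinely a sub-$F$-isocrystal—i.e., that the reflexive hull is preserved by both $\nabla$ and $\varphi$—and then invoking Katz's theorem correctly; once the reduction to rank one is in place, the character-theoretic argument is routine.
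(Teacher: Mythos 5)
The paper's proof of this lemma is just a citation to \cite[Theorem~5.3]{kedlaya-isocrystals}, so there is no internal argument to compare against; nonetheless, your proposed argument has a genuine gap that would prevent it from serving as a substitute.

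Your plan hinges on the claim that $\calI' := \calI \cap M$ is a \emph{rank-one} $B$-submodule of $M$, after which you take its reflexive hull. This step is exactly where the difficulty lies, and you do not justify it. The issue is that $B \to B_U$ is a \emph{completed} localization (e.g., $\Gamma(P,\calO)[1/p] \to \Gamma(P,\calO)\{1/\tilde f\}[1/p]$), not an ordinary algebraic localization. For an ordinary localization $B \to B[1/\tilde f]$, one could clear denominators and conclude that any $B[1/\tilde f]$-line in $M_U$ meets $M$ nontrivially. But elements of $B_U$ are $p$-adic limits of such fractions, not fractions themselves, so a generic $B_U$-line $\calI \subseteq M_U$ satisfies $\calI \cap M = 0$. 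Thus the assertion that $\calI \cap M$ is nonzero --- which your whole construction rests on --- is in effect asserting that the Frobenius-invariant horizontal section $s$ has a nonzero multiple with "coefficients in $B$." That is essentially the content of the lemma itself; the horizontality and $\varphi$-invariance constraints are precisely what must be used to rule out non-algebraic behavior, and your argument never brings them to bear on this point. (Your injectivity step, the constancy of slopes for rank-one objects via the Grothendieck specialization theorem, and the Katz correspondence plus $\pi_1^{\et}(U)\twoheadrightarrow\pi_1^{\et}(X)$ are all sound and would close the argument \emph{if} the extension $\calJ$ existed; the obstruction you flagged, namely stability of the reflexive hull under $\nabla$ and $\varphi$, is the lesser problem and can be handled by localizing at height-one primes and using flatness of $\sigma$.)

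A proof that does not simply cite the reference would instead have to use the $\varphi$-structure quantitatively: for instance, via a slope filtration argument reducing to the unit-root case (where Katz's correspondence applies directly and the $\pi_1$ surjectivity finishes), or via an estimate in the spirit of Katz's sharp slope estimate showing that a $\nabla$-flat, $\varphi$-fixed element of $M_U$ automatically lies in $M$. Your argument, as written, takes the hard analytic fact for granted by smuggling it into the unjustified non-vanishing of $\calI \cap M$.
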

\begin{proof}
See \cite[Theorem~5.3]{kedlaya-isocrystals}. 
\end{proof}
\begin{cor} \label{C:fully faithful}
For $\calE \in \FIsoc(X)$, the natural map $H^1_F(X, \calE)  \to H^1_F(U, \calE)$ is injective.
\end{cor}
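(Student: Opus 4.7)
The plan is to interpret elements of $H^1_F(X,\calE) = \Ext^1_{\FIsoc(X)}(\calO, \calE)$ as Yoneda extension classes and then exploit the full faithfulness from Lemma~\ref{L:fully faithful}. Specifically, a class in the kernel of $H^1_F(X,\calE) \to H^1_F(U,\calE)$ is represented by a short exact sequence
\[
0 \to \calE \to \calF \to \calO \to 0
\]
in $\FIsoc(X)$ whose restriction to $U$ admits a splitting $s \colon \calO|_U \to \calF|_U$. My goal is to show that $s$ lifts to a splitting on $X$.

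First, I would observe that restriction $\FIsoc(X) \to \FIsoc(U)$ is exact: under the local description in Definition~\ref{D:convergent F-isocrystal}, restricting to an open corresponds to a flat base change on the rings $\Gamma(P,\calO)[p^{-1}]$, which preserves kernels and cokernels of horizontal maps of finite projective modules. Hence the restricted sequence $0 \to \calE|_U \to \calF|_U \to \calO|_U \to 0$ is indeed exact and $s$ makes sense as a morphism in $\FIsoc(U)$.

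Next, applying Lemma~\ref{L:fully faithful} to the internal Hom $\calO^\dual \otimes \calF = \calF$, the restriction map $\Hom_{\FIsoc(X)}(\calO, \calF) \to \Hom_{\FIsoc(U)}(\calO|_U, \calF|_U)$ is a bijection, so $s$ is the restriction of a unique morphism $\tilde{s} \colon \calO \to \calF$ in $\FIsoc(X)$. The composite $\calO \xrightarrow{\tilde s} \calF \to \calO$ restricts on $U$ to the identity of $\calO|_U$, and applying full faithfulness once more to $\Hom_{\FIsoc(X)}(\calO,\calO)$ forces this composite to be the identity on $X$. Thus $\tilde{s}$ splits the original extension, so its class in $H^1_F(X, \calE)$ is zero, proving injectivity.

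The only step requiring care is the exactness of the restriction functor, and this is essentially formal from the flat-base-change description of $\FIsoc$; once that is in hand, the argument is a direct diagram chase powered by Lemma~\ref{L:fully faithful}.
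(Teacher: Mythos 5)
Your proposal is correct and takes the same route as the paper, which simply records that by Lemma~\ref{L:fully faithful} an extension in $\FIsoc(X)$ splits if and only if its restriction to $U$ does; your paragraph just unpacks that one-liner into its Yoneda-extension details (lifting the splitting via full faithfulness and checking the composite is the identity).
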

\begin{proof}
This follows from Lemma~\ref{L:fully faithful}: an exact sequence in $\FIsoc(X)$ splits if and only if its restriction to $\FIsoc(U)$ does so.
\end{proof}

\begin{defn}
Using the Dieudonn\'e--Manin decomposition of convergent $F$-isocrystals over a geometric point, we may associate to $\calE \in \FIsoc(X)$ its \emph{Newton polygon function};
this function assigns to each (not necessarily closed) point of $X$ the graph of a convex, piecewise affine function on the interval $[0, \rank(\calE)]$. See \cite[\S 3]{kedlaya-isocrystals} for more information.

An important case is when the Newton polygon of $\calE$ is a constant function whose value is a polygon with a single slope $\mu$; in this case we say that $\calE$ is
\emph{isoclinic} of slope $\mu$. In the special case where $\calE$ is isoclinic of slope $0$, we also say that $\calE$ is \emph{unit-root}.
\end{defn}

\begin{lemma} \label{L:slope filtration}
For $\calE \in \FIsoc(X)$, there exists a choice of $U$ (depending on $\calE$) on which $\calE$ has constant Newton polygon. Moreover, in $\FIsoc(U)$ there is a unique filtration
\[
0 = \calE_0 \subset \cdots \subset \calE_l = \calE|_U
\]
in which each successive quotient $\calE_i/\calE_{i-1}$ is isoclinic of slope $\mu_i$
and $\mu_1 < \cdots < \mu_l$; this filtration is called the \emph{slope filtration} of $\calE$.
\end{lemma}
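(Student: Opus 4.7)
The plan splits into two independent tasks: (i) exhibit a non-empty open $U \subset X$ on which $\calE$ has constant Newton polygon, and (ii) on such a $U$, construct and characterize the slope filtration.

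For (i) I would invoke the semicontinuity theorem of Grothendieck--Katz, extended to convergent $F$-isocrystals (see the discussion preceding \cite[\S 3]{kedlaya-isocrystals}): the Newton polygon of $\calE$, viewed as a function on $X$, is upper semicontinuous and takes only finitely many values. Hence the locus where it attains its generic (minimum) value is a non-empty open subscheme, and one takes $U$ to be any geometrically irreducible affine open inside this locus.

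For (ii), uniqueness follows from the general vanishing $\Hom_{\FIsoc(U)}(\calF,\calG) = 0$ whenever $\calF$ is isoclinic of slope $\mu$ and $\calG$ is isoclinic of slope $\mu' < \mu$; given two filtrations of the required form, comparing their terms of smallest slope and then inducting on the rank forces them to coincide. For existence, the plan is to invoke the slope filtration theorem for convergent $F$-isocrystals, producing inductively the smallest-slope subobject $\calE_1 \subset \calE|_U$ and then repeating the construction on the quotient $\calE|_U / \calE_1$, which still has constant Newton polygon (with the smallest slope removed). The standard construction of $\calE_1$ reduces, via a Tate twist, to the unit-root case, and then uses the equivalence between unit-root convergent $F$-isocrystals on $U$ and continuous $K$-linear representations of $\pi_1^{\et}(U)$ to extract $\calE_1$ as the subobject corresponding to the unit-root part of the stalk at a geometric generic point.

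The main obstacle is existence: on a single geometric point the filtration is given by Dieudonn\'e--Manin, but showing that these pointwise filtrations descend to a genuine subobject in $\FIsoc(U)$ requires the convergence hypothesis on the connection (formal flatness of Frobenius alone is not enough). For the purposes of this paper, however, the whole lemma is essentially bookkeeping, and I would simply cite the slope filtration theorem as recorded in \cite[\S 3]{kedlaya-isocrystals}, using Lemma~\ref{L:fully faithful} to confirm uniqueness when comparing $\FIsoc(X)$ to $\FIsoc(U)$.
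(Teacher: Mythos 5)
Your proposal is correct and takes essentially the same route as the paper: the paper's proof is simply a citation to \cite[Theorem~3.12, Corollary~4.2]{kedlaya-isocrystals}, and you likewise defer to that reference (the cited material in fact spans \S\S3--4, not just \S 3). Your expanded sketch --- Grothendieck--Katz specialization to produce $U$, $\Hom$-vanishing between isoclinic objects of distinct slopes for uniqueness, iterated extraction of the minimal-slope subobject via the unit-root case and the \'etale correspondence for existence --- is consistent with the content of the cited result, though note that the final invocation of Lemma~\ref{L:fully faithful} is unnecessary here since the filtration and its uniqueness both live entirely in $\FIsoc(U)$.
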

\begin{proof}
See \cite[Theorem~3.12, Corollary~4.2]{kedlaya-isocrystals}.
\end{proof}

\begin{defn}
In the notation of Lemma~\ref{L:slope filtration}, if $\mu_1 = 0$ then we refer to $\calE_1$ as the \emph{unit root} of $\calE$.
\end{defn}

\section{Exhaustive sequences of curves}

We next specify a key property of infinite sequences of curves to the effect that they ``fill out $X$''.

\begin{defn} \label{D:exhaustive sequence}
Let $C_1,C_2,\dots$ be a sequence of smooth curves in $X$. 
We say that this sequence is \emph{exhaustive} if it satisfies the following condition.
\begin{enumerate}
\item[(a)]
For each closed point $x \in X$, there exists $N>0$ such that $x \in C_i$ for all $i \geq N$.
Equivalently, for each finite set $S$ of closed points, there exists $N>0$ such that $S \subseteq C_i$ for all $i \geq N$.
\item[(b)]
For each finite set $S$ of closed points of $T_X$ which maps injectively to $X$,
there exist infinitely many indices $i$ such that $S \subseteq T_{C_i}$. (Note that we cannot require this for all sufficiently large $N$ because some of these conditions
are incompatible.)
\end{enumerate}
Such sequences always exist; see Corollary~\ref{C:existence of exhaustive sequence}.
\end{defn}

The condition on tangent vectors in Definition~\ref{D:exhaustive sequence} is included to make the following statement true.
\begin{lemma} \label{L:detect p-th power}
Suppose that $X = \Spec R$ is affine and let $C_1,C_2,\dots$ be an exhaustive sequence of smooth curves in $X$.
If $f \in R$ restricts to a $p$-th power on every $C_i$, then $f \in R^p$.
\end{lemma}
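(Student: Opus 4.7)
The plan is to reduce the claim to showing that $df = 0$ in $\Omega^1_{R/k}$. Once this is established, the lemma follows from the identity $R^p = \ker(d\colon R \to \Omega^1_{R/k})$, which holds because $X$ is smooth over the perfect field $k$: by étale localization, this reduces to the case of a polynomial $k$-algebra, where it follows from the fact that $k = k^p$ together with the description of the kernel of $d$ in terms of monomials $t_1^{a_1}\cdots t_n^{a_n}$ with all $a_j$ divisible by $p$.

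Next, since $\Omega^1_{R/k}$ is a finite projective $R$-module and $R$ is a Jacobson ring, to prove $df = 0$ it suffices to prove that $df(x) = 0$ in $\Omega^1_{R/k} \otimes_R \kappa(x) = T^*_{X,x}$ for every closed point $x \in X$. Dualizing, this amounts to showing that $v(f) = 0$ for every $v \in T_{X,x}$; and since $T_{X,x}$ is spanned over $\kappa(x)$ by its $\kappa(x)$-rational points, it is enough to treat those, which are precisely the closed points of $T_X$ with residue field $\kappa(x)$ lying over $x$.

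Fix such a pair $(x,v)$, viewed as a closed point of $T_X$. The singleton $S = \{(x,v)\}$ trivially maps injectively to $X$, so condition (b) of Definition~\ref{D:exhaustive sequence} supplies (infinitely many, hence at least one) index $i$ with $(x,v) \in T_{C_i}$; in other words, $x \in C_i$ and $v$ is tangent to $C_i$ at $x$. The derivation of $R$ at $x$ determined by $v$ therefore factors through the restriction $R \to \Gamma(C_i, \calO)$, giving $v(f) = v(f|_{C_i})$. By hypothesis $f|_{C_i} = g^p$ for some $g \in \Gamma(C_i, \calO)$, so $d(f|_{C_i}) = p g^{p-1}\, dg = 0$ and hence $v(f|_{C_i}) = 0$. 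Thus $v(f) = 0$, as required.

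The only nontrivial ingredient is the identification $R^p = \ker(d)$ for smooth $X$ over a perfect field, which is where smoothness of $X$ itself (rather than merely that of each $C_i$) enters. Everything else is a direct unwinding of the tangent-vector clause (b) in the definition of an exhaustive sequence, whose raison d'être (as noted in the remark preceding the lemma) is precisely to support arguments of this shape.
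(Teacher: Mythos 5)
Your proof is correct and follows exactly the same route as the paper's (the paper's entire proof is the one-line remark that it suffices to show $df = 0$, tested on tangent vectors at closed points); you have simply supplied all the details — the Cartier-type identity $R^p = \ker(d)$ for $R$ smooth over a perfect field, the reduction to fibers via finiteness of $\Omega^1_{R/k}$ and the Jacobson property, and the invocation of clause (b) of Definition~\ref{D:exhaustive sequence}.
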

\begin{proof}
It is equivalent to show that $df = 0$, which may be tested at the level of tangent vectors at closed points.
\end{proof}

\begin{lemma} \label{L:refine exhaustive sequence}
Let $\calP$ be a subset of the set of curves in $X$ (i.e., a ``property'' of curves in $X$).
Let $C_1,C_2,\dots$ be a sequence of curves in $X$.
Suppose that for every finite subset $S$ of $T_X$ which maps injectively to $X$, 
every cofinite subsequence of $C_1,C_2,\dots$ contains a curve $C \in \calP$
with $S \subset T_C$.
Then there exists a sequence of indices $i_1 < i_2 < \cdots$ such that $C_{i_j} \in \calP$ for all $j$ and 
 $C_{i_1}, C_{i_2},\dots$ is an exhaustive sequence of smooth curves in $X$.
\end{lemma}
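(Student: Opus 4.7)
The plan is to extract the required subsequence by a single diagonal argument that simultaneously arranges condition (a) (closed points eventually contained) and condition (b) (each finite tangent configuration hit infinitely often). The key point is that both conditions can be fed into the hypothesis as constraints on one enlarged finite subset of $T_X$.

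First I would set up enumerations. Since $k$ is finite and $X$ (hence $T_X$) is of finite type over $k$, the set of closed points of $X$ and the set of finite subsets of closed points of $T_X$ that map injectively to $X$ are both countable; enumerate them as $x_1,x_2,\dots$ and $S_1,S_2,\dots$ respectively. Fix any function $\phi\colon\NN\to\NN$ such that $\phi^{-1}(j)$ is infinite for every $j$, so that each $S_j$ gets ``scheduled'' at infinitely many stages.

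Now I would build $i_1<i_2<\cdots$ inductively. Having chosen $i_1,\dots,i_{n-1}$, form $S^{(n)}$ by starting from $S_{\phi(n)}$ and adjoining, for each $k\le n$ whose point $x_k$ is \emph{not} already in the image of $S_{\phi(n)}$ in $X$, one arbitrarily chosen closed point of $T_X$ lying over $x_k$. By construction $S^{(n)}$ is a finite set of closed points of $T_X$ mapping injectively to $X$, and its image in $X$ contains $\{x_1,\dots,x_n\}$. Applying the hypothesis to $S^{(n)}$ and the cofinite tail $C_{i_{n-1}+1},C_{i_{n-1}+2},\dots$ produces an index $i_n>i_{n-1}$ with $C_{i_n}\in\calP$ and $S^{(n)}\subset T_{C_{i_n}}$.

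Finally I would verify exhaustiveness. For (a): given a closed point $x_k$, for every $n\ge k$ the set $S^{(n)}$ contains a tangent vector above $x_k$, so $x_k\in C_{i_n}$, giving containment for all sufficiently large $n$. For (b): given $j$, at every stage $n$ with $\phi(n)=j$ we have $S_j\subset S^{(n)}\subset T_{C_{i_n}}$, and there are infinitely many such $n$ by choice of $\phi$.

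I do not expect any deep obstacle here; the only subtlety is preserving the ``maps injectively to $X$'' constraint when merging the point requirements of (a) with the tangent requirements of (b), which is why the adjunction step above tests whether $x_k$ already appears in the image of $S_{\phi(n)}$ before adding a new tangent vector above it. Otherwise the argument is a routine diagonalization that uses the hypothesis exactly once per stage.
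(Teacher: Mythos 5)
Your proof is correct and follows essentially the same diagonalization strategy as the paper's: enumerate the closed points $x_1,x_2,\dots$ and the finite tangent configurations with repetition (your $\phi$ plays the role of the paper's enumeration of $V$ ``including each element infinitely many times''), then at each stage invoke the hypothesis once on a cofinite tail. The one place where you are more explicit than the paper is the construction of the enlarged set $S^{(n)}$ by adjoining tangent vectors over the $x_k$ not already covered while preserving injectivity to $X$ --- the paper simply asserts ``by hypothesis we can find'' a curve containing $\{x_1,\dots,x_{j+1}\}$ and $S_{j+1}$, leaving that merging step implicit, so your spelling it out is a genuine (if small) improvement in clarity rather than a different route.
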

\begin{proof}
Since $X$ is of finite type over a finite field, $X^\circ$ is countable;
let $x_1, x_2,\dots$ be an enumeration of $X^\circ$.
By the same token, $(T_X)^\circ$ is countable,
as is the set $V$ of finite subsets of $(T_X)^\circ$ which map injectively to $X$; let $S_1, S_2, \dots$ be a sequence of elements of $V$
including each element of $V$ infinitely many times.

We choose the indices $i_1,i_2,\dots$ as follows. Given $i_1,\dots,i_j$, by
hypothesis we can find $i_{j+1} \notin \{i_1,\dots,i_j\}$ such that:
\begin{itemize}
\item
$\{x_1,\dots,x_{j+1}\} \subset C_{i_{j+1}}$;
\item
$S_{j+1} \subset T_{C_{i_{j+1}}}$;
\item
$C_{i_{j+1}} \in \calP$.
\end{itemize}
The resulting sequence $C_{i_1}, C_{i_2},\dots$ is then an exhaustive sequence of smooth curves in $X$.
\end{proof}
\begin{cor} \label{C:existence of exhaustive sequence}
If $X$ is quasiprojective, then there exists an exhaustive sequence of smooth curves in $X$.
\end{cor}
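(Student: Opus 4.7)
The plan is to apply Lemma~\ref{L:refine exhaustive sequence} with $\calP$ taken to be the set of all smooth curves in $X$ and with initial sequence $C_1, C_2, \dots$ an enumeration of this same set. Such an enumeration exists because $X$ is of finite type over the countable field $k$, so that its set of locally closed subschemes (and \emph{a fortiori} its set of smooth curves) is countable. With these choices every term of the initial sequence already lies in $\calP$, and the hypothesis of Lemma~\ref{L:refine exhaustive sequence} collapses to a single existence claim: for every finite subset $S \subset T_X$ which maps injectively to $X$, there are infinitely many smooth curves $C$ in $X$ with $S \subset T_C$.

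To establish this claim, I would fix a quasiprojective embedding $X \hookrightarrow \PP^N$ and, given $S = \{v_1,\dots,v_m\}$ based at distinct closed points $x_1,\dots,x_m$, encode the requirements $f(x_i) = 0$ and $df_{x_i}(v_i) = 0$ as a system of $2m$ linear constraints on the coefficients of a degree-$d$ hypersurface $\{f = 0\} \subset \PP^N$. For each sufficiently large $d$, a Bertini-type theorem over finite fields in the style of Poonen (with imposed local conditions at the $x_i$), combined with a Bertini irreducibility statement, should produce $\dim X - 1$ such hypersurfaces of degree $d$ whose intersection with $X$ is a smooth, geometrically irreducible curve; this curve automatically contains each $x_i$ with $v_i$ in its tangent line. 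Letting $d$ range over sufficiently large integers then yields infinitely many distinct smooth curves meeting the incidence conditions, completing the verification.

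The hard part is producing even a \emph{single} smooth, geometrically irreducible curve realizing the prescribed tangent data. Over an infinite base field this is routine Bertini plus a dimension count, but over the finite field $k$ one cannot simply pick a ``general'' hyperplane and must instead invoke a density-type Bertini theorem to guarantee smoothness and a separate Bertini irreducibility result to guarantee geometric irreducibility. Once a single such curve is granted for each large $d$, infinitude is automatic by varying $d$, and Lemma~\ref{L:refine exhaustive sequence} then extracts the desired exhaustive sequence.
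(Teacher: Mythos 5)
Your proposal is correct and follows essentially the same route as the paper: apply Lemma~\ref{L:refine exhaustive sequence} with $\calP$ the full set of smooth curves and $C_1,C_2,\dots$ an enumeration of that set, reducing everything to the claim that any finite set of tangent vectors with distinct base points is realized by infinitely many smooth geometrically irreducible curves, which follows from Poonen's Bertini theorem over finite fields. The paper's proof is a two-line version of exactly this; your extra detail about encoding point and tangent conditions as local constraints on hypersurface sections and invoking a Bertini irreducibility statement for geometric irreducibility is the content hidden behind the paper's citation of \cite{poonen}.
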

\begin{proof}
We apply Lemma~\ref{L:refine exhaustive sequence} with $\calP$ being the set of all curves in $X$,
and $C_1, C_2,\dots$ being an enumeration of $\calP$.
The remaining hypothesis of Lemma~\ref{L:refine exhaustive sequence} holds by Poonen's Bertini theorem \cite{poonen}.
\end{proof}

\begin{lemma} \label{L:admits a section}
Let $C_1,C_2,\dots$ be an exhaustive sequence of smooth curves in $X$.
Let $f: Y \to X$ be a finite \'etale covering. Then $f$ admits a section if and only if the induced map $Y \times_X C_i \to C_i$ admits a section for each $i$.
\end{lemma}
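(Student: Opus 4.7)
The forward direction is immediate: restricting a section of $f$ to any $C_i$ yields a section of $Y \times_X C_i \to C_i$. For the converse, my plan is to reduce to the case of a connected cover and then invoke Chebotarev density together with the classical fact that a proper subgroup of a finite group is never the union of its conjugates.

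For the reduction, I would decompose $Y = Y_1 \sqcup \cdots \sqcup Y_r$ into its finitely many connected components. Since each $C_i$ is connected, any section of $Y \times_X C_i \to C_i$ factors through a unique $Y_{j(i)} \times_X C_i$; pigeonhole on $j$ then produces one component $Y_j$ for which $Y_j \times_X C_i \to C_i$ admits a section for infinitely many $i$. Because any section of $Y_j \to X$ is \emph{a fortiori} a section of $f$, it suffices to prove that the finite \'etale morphism $Y_j \to X$ has degree $1$.

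Next, I would argue that every closed fiber of $Y_j \to X$ has a rational point. Evaluating a section of $Y_j \times_X C_i \to C_i$ at a closed point $x \in C_i$ produces a $k(x)$-rational point of $(Y_j)_x$. Condition~(a) of Definition~\ref{D:exhaustive sequence} guarantees that every closed point of $X$ lies in $C_i$ for all sufficiently large $i$, hence in some $C_i$ from the infinite subset produced above; this forces $(Y_j)_x$ to carry a $k(x)$-rational point for every $x \in X^\circ$.

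To finish, I would pass to the Galois closure $\widetilde Y \to X$ of $Y_j \to X$, with Galois group $G$, and write $Y_j = \widetilde Y / H$ for a subgroup $H \le G$. A $k(x)$-rational point of $(Y_j)_x$ corresponds to a fixed point of $\mathrm{Frob}_x$ acting on the coset space $G/H$, equivalently to a $G$-conjugate of $\mathrm{Frob}_x$ lying in $H$. Chebotarev density applied to $\widetilde Y \to X$ shows that every conjugacy class of $G$ is realized by some $\mathrm{Frob}_x$, so $G = \bigcup_{g \in G} g H g^{-1}$; Jordan's lemma then yields $H = G$, i.e.\ $Y_j \to X$ is an isomorphism. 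The reductions are purely formal, so the only conceptually nontrivial step---and hence the main obstacle---is identifying Chebotarev density together with Jordan's lemma as the right input.
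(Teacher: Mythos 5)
Your proof is correct and rests on the same two ingredients as the paper's: the Chebotar\"ev density theorem and Jordan's lemma (that a finite group is not the union of the conjugates of a proper subgroup, equivalently that a transitive permutation group of degree $n>1$ contains a fixed-point-free element). The paper phrases this in terms of the permutation representation $\rho:\pi_1(X,\overline{x})\to S_n$ attached to a connected $Y$ and observes that the subgroups $\rho(\pi_1(C_i,\overline{x}))$ fill out the image of $\rho$ as $i$ varies, whereas you pass to the Galois closure $\widetilde Y/H$ and track Frobenius elements at individual closed points, with a pigeonhole argument in place of the paper's reduction ``we may assume $Y$ connected''; these are formally equivalent repackagings of the same argument.
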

\begin{proof}
We may assume that $Y$ is connected and not isomorphic to $X$, in which case we wish to show that $Y \times_X C_i \to C_i$ fails to admit a section for some $i$.
Choose a geometric point $\overline{x} \to X$; then $f$ corresponds to a continuous homomorphism $\rho: \pi_1(X, \overline{x}) \to S_n$
for some $n>1$ with transitive image. For each $i$ such that $C_i$ contains the image of $\overline{x}$ in $X$,
the restriction $Y \times_X C_i \to C_i$ admits a section if and only if the action of $\rho(\pi_1(C_i, \overline{x}))$ 
on $\{1,\dots,n\}$ has a fixed point. However, by the Chebotar\"ev density theorem (e.g., see \cite[Theorem~1.1]{meagher}), 
as $i$ varies these images fill out the entire image of $\rho$,
and this image contains at least one fixed-point-free element (the average number of fixed points is $1$ and the identity exceeds this).
\end{proof}

\section{Restrictions of convergent isocrystals to curves}

Throughout this section, let $C_1,C_2,\dots$ be an exhaustive sequence of smooth curves in $X$.
We relate various properties of a convergent $F$-isocrystal on $X$ to the restrictions of this isocrystal to the $C_i$.

\begin{lemma} \label{L:H1 injective}
Suppose that $\calE \in \FIsoc(X)$ has the property that at every point of $X$, the Newton polygon of $\calE$ has all slopes negative. Then the map 
\[
H^1_F(X, \calE) \to \prod_i H^1_F(C_i, \calE|_{C_i})
\]
is injective.
\end{lemma}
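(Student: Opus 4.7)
The plan is to translate the vanishing of $c \in H^1_F(X, \calE)$ on each $C_i$ into a global splitting of the associated extension $0 \to \calE \to \calF \to \calO \to 0$. First I would establish that, under the negative-slope hypothesis, $H^0_F(Y, \calE|_Y) = 0$ for every smooth irreducible subscheme $Y$ of $X$: at each closed point $y$ of $Y$, the Frobenius $\Phi$ on $\calE(y)$ has all eigenvalues of negative $p$-adic valuation and hence admits no fixed vector; thus any $\FIsoc$-morphism $\calO \to \calE|_Y$ vanishes at every closed point of $Y$, and by horizontality extends to zero on the tube of each such point, hence vanishes identically. Consequently each splitting $s_i \colon \calO|_{C_i} \to \calF|_{C_i}$ (provided by $c|_{C_i} = 0$) is unique, and for every closed point $x \in C_i \cap C_j$ the difference $s_i(x) - s_j(x) \in \calE(x)$ is $\Phi$-fixed, hence zero. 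By exhaustive condition (a) of Definition~\ref{D:exhaustive sequence}, every closed point $x \in X$ lies in $C_i$ for all sufficiently large $i$, yielding a well-defined pointwise lift $\widetilde{s}(x) \in \calF(x)$ at every closed point.

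Next I would locally promote these pointwise lifts to analytic sections. Working on an affine open of $X$ with a smooth affine formal lift $(P, \sigma)$ over $W(k)$, identify $\calF$ with a finite projective $A_K$-module $M$ equipped with connection $\nabla$ and Frobenius $\Phi$, where $A_K := \Gamma(P,\calO)[p^{-1}]$. For each closed point $x \in X$, the tube $]x[ \subset P_K^{\rig}$ is an open polydisk, and the formal solution to $\nabla s = 0$ with initial value $\widetilde{s}(x)$ converges on $]x[$ because $\calF$ is a convergent isocrystal; call this section $s_x$. Since $\Phi(\sigma^* s_x)$ is horizontal on $]x[$ with the same specialization $\widetilde{s}(x)$ at $x$, uniqueness gives $\Phi(\sigma^* s_x) = s_x$, so $s_x$ is Frobenius-invariant. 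For each $x \in C_i$, the section $s_x$ agrees with the restriction of $s_i$ to $]x[ \cap ]C_i[$ by the same uniqueness, applied in the one-dimensional tube.

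The main obstacle is the final assembly step: verifying that the family $\{s_x\}_{x \in X^\circ}$, defined on the disjoint tubes of closed points, descends to a genuine element of $M$, i.e., a section of $\calF$ on $X$ in the category $\FIsoc(X)$. Here the exhaustive condition (b) plays the crucial role, in the spirit of Lemma~\ref{L:detect p-th power}: for every finite collection of tangent vectors at closed points of $X$ which maps injectively to $X$, there is some $C_i$ simultaneously containing all of them in its tangent bundle. The corresponding $s_i$ then prescribes the first-order behavior of the sought-after global section in each tangent direction at each closed point, and iterating over higher-order infinitesimal neighborhoods pins down the full Taylor data needed to glue. This should assemble $\{s_x\}$ into a genuine horizontal, Frobenius-invariant section $s \in M$ that splits the extension, forcing $c = 0$.
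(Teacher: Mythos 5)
Your approach --- building the splitting directly rather than manipulating a cocycle --- is a genuinely different strategy from the paper's, and the preparatory steps are sound: negative slopes force $H^0_F(Y,\calE|_Y)=0$ for any smooth irreducible $Y\subset X$, so the $s_i$ agree at shared closed points, and the horizontal Frobenius-invariant sections $s_x$ on the residue tubes exist and match the $s_i$. However, the gap you acknowledge in your final paragraph is the essential content of the lemma, and the mechanism you propose does not close it.

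The residue tubes $]x[$ are pairwise disjoint, and the $]C_i[$ overlap only in unions of residue tubes; this family is not an admissible covering of $P_K^{\rig}$, so no descent or Tate-acyclicity statement manufactures an element of the finite projective $\Gamma(P,\calO)[p^{-1}]$-module from a compatible family of sections on these tubes. More to the point, condition~(b) on tangent vectors contributes nothing here: because the sought section $s$ is horizontal, its entire formal Taylor expansion at each $x$ is already determined by $\widetilde{s}(x)$ together with the connection, so the first-order (or any higher-order) jet data that $s_i$ supplies at $x$ is redundant with what $s_x$ already encodes. And Lemma~\ref{L:detect p-th power} points in the wrong direction for your purposes: it takes an honest global element $f\in R$ satisfying a local condition and concludes $f\in R^p$; what you need is the far stronger assertion that a formal-local family coherent along an exhaustive family of curves actually comes from a global element, and nothing of that nature is proved or cited.

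The paper closes exactly this gap by a different device. After reducing to the isoclinic case (and to $X$ affine with $\calE$ free), it represents the class by a cocycle $(\bv_1,\bv_2)$, invokes Katz's sharp slope estimate to find $n$ with $A_n^{-1}$ having entries in $p\,\Gamma(P,\calO)$, and observes that solvability of the trivializing equation forces a $p^n$-th-power-mod-$p$ congruence on a genuine global vector over $\Gamma(P,\calO)$. That congruence can be detected on curves by Lemma~\ref{L:detect p-th power} --- i.e.\ condition~(b) is applied to a global algebraic quantity, not to a formal-local one --- and iterating produces a $p$-adically convergent approximation to the trivializing section. The $p$-adic completeness of $\Gamma(P,\calO)$ is what replaces your missing gluing step, and it is where the negative-slope hypothesis (via Katz) does its real work.
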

\begin{proof}
By Corollary~\ref{C:fully faithful}, we may check the claim after passing from $X$ to $U$. Hence by Lemma~\ref{L:slope filtration}, we may assume that $\calE$ admits a slope filtration;
we may then reduce immediately to the case where $\calE$ is isoclinic of negative slope. We may also assume that $X$ is affine,
and thus realize the category $\FIsoc(X)$ as in Definition~\ref{D:convergent F-isocrystal}; by shrinking $X$ again, we may further ensure that $\calE$ is a free $\Gamma(P,\calO)$-module, and fix a basis.

Let $A$ be the matrix over $\Gamma(P, \calO)[p^{-1}]$ via which $\sigma$ acts on the chosen basis of $\calE$.
For each positive integer $n$, $\sigma^n$ acts on the same basis via the matrix $A_n := A \sigma(A) \cdots \sigma^{n-1}(A)$.
By a lemma of Katz \cite[Sharp Slope Estimate 1.5.1]{katz-slope}, for some $n$, $A_n^{-1}$ has entries in $p \Gamma(P, \calO)$.

An element of $H^1_F(X, \calE)$ is represented by a pair $(\bv_1, \bv_2) \in \calE \times (\calE \otimes_{\calO} \Omega^1_{P/W(k)})$ satisfying
$\nabla \bv_1 = (\sigma-1) \bv_2$; this element is zero if and only if there exists $\bw \in \calE$ such that $\bv_1 = (\sigma-1)\bw, \bv_2 = \nabla \bw$. 
Define 
\[
\bv_{1,n} := \bv_1 + \sigma \bv_1 + \cdots + \sigma^{n-1} \bv_1 \qquad (= (\sigma^n-1)\bw).
\]
Let $v_1, v_{1,n}$, and $w$ (if it exists) be the column vectors representing $\bv_1, \bv_{1,n}$, and $\bw$  respectively, in terms of our chosen basis; we may normalize so that $v_{1,n}$ has entries in $\Gamma(P, \calO)$ (without denominators).
Then the equation $\bv_1 = (\sigma-1)\bw$ translates into $v_{1,n} = (A_n \sigma^n - 1) w$ or equivalently $A_n^{-1} v_{1,n} = (\sigma^n - A_n^{-1})w$; in particular, if we can solve for $w$, then $v_{1,n}$ reduces to a $p^n$-th power modulo $p$.
By Lemma~\ref{L:detect p-th power}, we can infer this condition from
the corresponding condition on curves. Consequently, we can translate our original pair $(\bv_1, \bv_2)$ without changing the resulting class in $H^1_F(X, \calE)$ so that the vector $v_{1,n}$ becomes zero modulo $p$.

Repeating the argument, we can make $v_{1,n}$ vanish modulo successively higher powers of $p$, and in the limit reduce to a case in which $\bv_{1,n} = 0$.
At this point, we have the zero element of $H^1_F(X, \calE)$ if and only if $\bv_2 = 0$;
this vanishing can again be detected by restriction to the $C_i$.
\end{proof}

\begin{lemma} \label{L:H0 preservation}
For $\calE \in \FIsoc(X)$ which is unit-root,
there exists an exhaustive subsequence $C_{i_1}, C_{i_2}, \dots$ of $C_1,C_2,\dots$ such that
$H^0_F(X, \calE) \cong H^0_F(C_{i_j}, \calE|_{C_{i_j}})$ for each $j$.
\end{lemma}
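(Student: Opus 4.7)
\emph{Plan.} I would first translate the problem via Katz's equivalence: the unit-root convergent $F$-isocrystal $\calE$ corresponds to a continuous representation $\rho \colon \pi_1(X, \overline{x}) \to \GL(V)$ on a finite-dimensional $K$-vector space $V$, under which $H^0_F(X, \calE) \cong V^H$ for $H := \rho(\pi_1(X, \overline{x}))$, and $H^0_F(C, \calE|_C) \cong V^{\rho(\pi_1(C, \overline{x}))}$ for any smooth curve $C \subseteq X$ (after compatibly choosing basepoints). The restriction map is the tautological inclusion $V^H \hookrightarrow V^{\rho(\pi_1(C))}$, always injective; the content of the lemma is to build an exhaustive subsequence on which this inclusion is an equality.

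I would then apply Lemma~\ref{L:refine exhaustive sequence} to the property $\calP := \{C : V^H = V^{\rho(\pi_1(C))}\}$, reducing the problem to showing: for every finite set $S \subseteq T_X$ mapping injectively to $X$ and every cofinite subsequence of $C_1, C_2, \dots$, there is a curve in the subsequence belonging to $\calP$ with $S \subseteq T_C$.

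To verify this hypothesis, note that since $V$ is finite-dimensional there exist finitely many $h_1, \dots, h_m \in H$ with $\bigcap_j V^{h_j} = V^H$; upper semi-continuity of $\dim V^h$ in $h$ makes the condition $\bigcap_j V^{h'_j} = V^H$ a $p$-adically open one on $H^m$. By Chebotarev density (compare the proof of Lemma~\ref{L:admits a section}), the set of Frobenius conjugacy classes is topologically dense in $H$, so there are closed points $x_1, \dots, x_m \in X$ whose Frobenius elements fall (up to conjugation) into this open set. By exhaustivity of our cofinite subsequence, we can choose a curve $C$ in it that contains all $x_j$ and has $S \subseteq T_C$; then $\rho(\pi_1(C, \overline{x}))$ contains a specific $H$-conjugate of each $\rho(\mathrm{Frob}_{x_j})$.

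The main obstacle is the conjugation ambiguity: the $H$-conjugate of $\rho(\mathrm{Frob}_{x_j})$ that actually appears in $\rho(\pi_1(C, \overline{x}))$ is determined by the path geometry of $C$, while our open condition on $H^m$ is not a priori conjugation-invariant. To handle this, I would pass to the Zariski closure $\overline{H}^{\mathrm{Zar}} \subseteq \GL(V)$: since $h \mapsto V^h$ is polynomial in $h$, we have $V^H = V^{\overline{H}^{\mathrm{Zar}}}$, so it suffices to arrange $\rho(\pi_1(C, \overline{x}))$ to be Zariski-dense in $\overline{H}^{\mathrm{Zar}}$. The property of generating a Zariski-dense subgroup is Zariski-open on the generators; with $m$ chosen large enough and the Frobenius elements generic enough, this property should be preserved under arbitrary $H$-conjugation of the individual generators, yielding $V^{\rho(\pi_1(C))} = V^{\overline{H}^{\mathrm{Zar}}} = V^H$ as desired.
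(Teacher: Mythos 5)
Your overall strategy --- translate via Katz's equivalence to a representation $\rho\colon \pi_1(X,\overline{x}) \to \GL(V)$, invoke Lemma~\ref{L:refine exhaustive sequence} with $\calP = \{C : V^{\rho(\pi_1(C))} = V^H\}$, and feed the hypothesis of that lemma via Chebotarev --- is the right framework and shares the key ingredients with the paper's argument (which also uses Katz's equivalence, Chebotarev via Lemma~\ref{L:admits a section}, and Lemma~\ref{L:refine exhaustive sequence}). However, the step you flag yourself as ``the main obstacle'' is a genuine gap, and the Zariski-closure fix does not close it.

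The problem is that Chebotarev only controls Frobenius elements up to conjugacy, whereas the containment $\rho(\mathrm{Frob}_{x_j}) \in \rho(\pi_1(C))$ is only up to a path-dependent conjugation in $H$, and the condition $\bigcap_j V^{h_j'} = V^H$ is not conjugation-invariant in the tuple $(h_j')$. You try to repair this by demanding that $\rho(\pi_1(C))$ be Zariski-dense in $\overline{H}^{\mathrm{Zar}}$. But the assertion that Zariski-density of the generated subgroup is both a Zariski-open condition on tuples and robust under \emph{independent} $H$-conjugation of each generator is exactly where the argument needs a proof, and it is far from clear. (For instance, if $\overline{H}^{\mathrm{Zar}}$ is disconnected, density is a condition on the images in the component group as well, and independently conjugating generators can easily collapse these.) Moreover, Chebotarev gives you closed points with Frobenius in a prescribed conjugacy class, not arbitrary elements of $H$; to get a $p$-adically ``generic'' tuple you would need an approximation argument that is also absent. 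Finally, you are trying to prove something strictly stronger than the lemma asks for: density of $\rho(\pi_1(C))$ in $\overline{H}^{\mathrm{Zar}}$ is much more than $V^{\rho(\pi_1(C))} = V^H$.

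The paper avoids all of this by two extra reductions. First, it reduces to the case $H^0_F(X,\calE) = 0$ by an induction on rank: split off the maximal trivial subobject $\calE_1$ spanned by $H^0_F(X,\calE)$, apply the inductive hypothesis to $\calE/\calE_1$, and use Lemma~\ref{L:H1 injective} together with the five lemma to glue. Second --- and this is where the conjugation issue dissolves --- once $V^H = 0$, it chooses a lattice and reduces modulo $p^n$ for $n$ large enough that the mod-$p^n$ representation has no nonzero fixed vector; it then applies Lemma~\ref{L:admits a section} to the finite \'etale cover corresponding to the $\pi_1(X)$-set of nonzero residues. The relevant condition at the finite level, namely that the image of $\rho(\pi_1(C_i))$ acts without a nonzero fixed vector, is conjugacy-invariant, so Chebotarev can be used at the level of conjugacy classes with no ambiguity. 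Your proposal misses both of these reductions, and without at least the passage to a finite quotient where a conjugation-invariant criterion is available, the argument does not go through.
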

\begin{proof}
We start with a reduction to the case where $H^0_F(X, \calE) = 0$.
Suppose that 
\[
0 \to \calE_1 \to \calE \to \calE_2 \to 0
\]
is a short exact sequence for which the conclusion holds for $\calE_1$ and $\calE_2$ (for any choice of the original exhaustive sequence).
We may then refine the sequence of curves first to get a subsequence that works for $\calE_1$, and then again to get a subsequence
that also works for $\calE_2$. At this point, Lemma~\ref{L:H1 injective} (applied to $\calE_1$)
and the five lemma imply that the conclusion also holds for $\calE$. Since the claim holds trivially if $\calE$ is spanned by $H^0_F(X, \calE)$,
by induction on $\rank \calE$ we obtain the desired reduction.

Assume now that  $H^0_F(X, \calE) = 0$.
We first produce a single index $i$ for which $H^0_F(C_i, \calE|_{C_i}) = 0$,
assuming only condition (a) of the definition of an exhaustive sequence.
By \cite[Theorem~3.7]{kedlaya-isocrystals}, we may identify $\calE$ with an \'etale $\QQ_p$-local system on $X$; by choosing a lattice,
we may obtain this from an \'etale $\ZZ_p$-local system.
For some $n$, the reduction of this local system modulo $p^n$ admits no section;
applying Lemma~\ref{L:admits a section} shows that $H^0_F(C_{i}, \calE|_{C_{i}}) = 0$ for some $i$.

Since we only used condition (a) of the definition of an exhaustive sequence, 
we can also force $C_i$ to contain a particular finite set of tangent vectors.
By Lemma~\ref{L:refine exhaustive sequence}, we can thus pick out an exhaustive subsequence.
\end{proof}

\begin{lemma} \label{L:H0 on curves}
For $\calE \in \FIsoc(X)$, there exists an exhaustive subsequence $C_{i_1}, C_{i_2}, \dots$ of $C_1,C_2,\dots$ such that
$H^0_F(X, \calE) \cong H^0_F(C_{i_j}, \calE|_{C_{i_j}})$ for each $j$.
\end{lemma}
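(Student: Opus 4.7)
The plan is to reduce to the unit-root case (Lemma~\ref{L:H0 preservation}) via the slope filtration. By Lemma~\ref{L:slope filtration}, pick an open dense $U \subseteq X$ on which $\calE|_U$ admits a slope filtration with isoclinic graded pieces of slopes $\mu_1 < \cdots < \mu_l$. Let $\calE^{<0} \subseteq \calE^{\leq 0} \subseteq \calE|_U$ denote the largest filtration subobjects with slopes all $<0$ and $\leq 0$ respectively, and set $\calE^{ur} := \calE^{\leq 0}/\calE^{<0}$ (the slope-$0$ graded piece; zero if no slope equals zero). A Dieudonn\'e--Manin check on a geometric fiber shows that an isoclinic $F$-isocrystal of nonzero slope admits no $F$-fixed global sections. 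Combined with Lemma~\ref{L:fully faithful} and the fact that $\calE|_U/\calE^{\leq 0}$ has only positive slopes, this yields
\[
H^0_F(X, \calE) \;\cong\; H^0_F(U, \calE^{\leq 0}),
\]
and uniqueness of the slope filtration gives $H^0_F(C, \calE|_C) \cong H^0_F(C \cap U, \calE^{\leq 0}|_{C \cap U})$ for every smooth curve $C$ in $X$ meeting $U$.

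Next, the sequence $(C_i \cap U)$ (with empty intersections discarded) is an exhaustive sequence of smooth curves in $U$: conditions (a) and (b) of Definition~\ref{D:exhaustive sequence} transfer directly from $(C_i)$ in $X$ since closed points of $U$ are closed in $X$ and $T_U \subseteq T_X$. Apply Lemma~\ref{L:H0 preservation} to the unit-root $\calE^{ur}$ on $U$ with this sequence, obtaining a subsequence along which $H^0_F(U, \calE^{ur})$ is preserved on restriction. A diagram chase using the long exact sequences of $0 \to \calE^{<0} \to \calE^{\leq 0} \to \calE^{ur} \to 0$, together with Lemma~\ref{L:H1 injective} applied to $\calE^{<0}$ (whose slopes are strictly negative as required), then upgrades this preservation from $\calE^{ur}$ to $\calE^{\leq 0}$ and hence, by the identifications above, to $\calE$.

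Finally, the subsequence produced above is exhaustive in $U$ via intersections but need not be exhaustive in $X$; I would resolve this by invoking Lemma~\ref{L:refine exhaustive sequence} with $\calP$ the class of smooth curves $C$ in $X$ realizing $H^0_F(X, \calE) \cong H^0_F(C, \calE|_C)$. To verify its hypothesis for a given finite $S \subset T_X$ mapping injectively to $X$ and a given cofinite subsequence of $(C_i)$, one enlarges $S$ if needed so it contains a tangent vector based in $U$ (ensuring candidate curves meet $U$) and then appeals to the fact that the proof of Lemma~\ref{L:H0 preservation} produces, in any exhaustive sequence in $U$, a good curve containing any prescribed finite tangent data. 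The main obstacle I anticipate is the diagram chase combining Lemmas~\ref{L:H0 preservation} and~\ref{L:H1 injective}: Lemma~\ref{L:H1 injective} only provides injectivity into a product of restriction maps, so extracting an \emph{individual}-curve isomorphism requires choosing the subsequence carefully so that the image of the relevant connecting homomorphism $H^0_F(U, \calE^{ur}) \to H^1_F(U, \calE^{<0})$ intersects trivially with the kernel of each individual restriction $H^1_F(U, \calE^{<0}) \to H^1_F(C \cap U, \calE^{<0}|_{C \cap U})$---the same delicate issue implicit in Lemma~\ref{L:H0 preservation}'s own reduction.
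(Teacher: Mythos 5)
Your proposal follows essentially the same route as the paper: pass to a dense open $U$ where the slope filtration exists, discard the positive-slope graded pieces, apply Lemma~\ref{L:H0 preservation} to the unit-root top quotient, and finish with a diagram chase using Lemma~\ref{L:H1 injective} (your $\calE^{<0}$, $\calE^{\leq 0}$, $\calE^{ur}$ are the paper's $\calE_{l-1}$, $\calE$, $\calE/\calE_{l-1}$ after stripping). The concern you flag at the end is precisely the step the paper compresses into ``By Lemma~\ref{L:H1 injective}, we may lift every element of $H^0_F(X,\calE/\calE_{l-1})$ to an element of $H^0_F(X,\calE)$,'' so you have correctly located the delicate point rather than missed a step; you are also more explicit than the paper about transferring exhaustiveness from $U$ back to $X$ via Lemma~\ref{L:refine exhaustive sequence}.
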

\begin{proof}
By Lemma~\ref{L:fully faithful}, we may again check the claim after passing from $X$ to $U$.
Hence by Lemma~\ref{L:slope filtration}, we may assume that $\calE$ admits a slope filtration.
If $\mu_l > 0$, then $H^0_F(X, \calE) = H^0_F(X, \calE_{l-1})$; we may thus reduce to the case where $\mu_l \leq 0$.
Moreover, we may assume $\mu_l < 0$, as otherwise $H^0_F(X, \calE) = H^0_F(C, \calE|_C) = 0$.
By Lemma~\ref{L:H0 preservation},  after replacing the original sequence with an exhaustive subsequence,
we may ensure that 
$H^0_F(X, \calE/\calE_{l-1}) \cong H^0_F(C_i, (\calE/\calE_{l-1})|_{C_i})$ for each $i$.

We check that this sequence has the desired property.
By Lemma~\ref{L:H1 injective}, we may lift every element of $H^0_F(X, \calE/\calE_{l-1})$
to an element of $H^0_F(X, \calE)$. Hence in the commutative diagram
\[
\xymatrix{
H^0_F(X, \calE) \ar[r] \ar[d] & H^0_F(X, \calE/\calE_{l-1}) \ar[d] \\
H^0_F(C_i, \calE|_{C_i}) \ar[r] & H^0_F(C_i, (\calE/\calE_{l-1})|_{C_i})  \\
}
\]
the top horizontal arrow and the right vertical arrow are isomorphisms. Hence the bottom horizontal arrow is surjective; since it is also necessarily injective
(because $H^0_F(C_i, \calE_{l-1}|_{C_i}) = 0$ on account of slopes), it is an isomorphism, proving that the left vertical arrow is an isomorphism. 
\end{proof}

\begin{cor} \label{C:restriction to enough curves}
For $\calE_1, \calE_2 \in \FIsoc(X)$, $\calE_1 \cong \calE_2$ if and only if $\calE_1|_{C_i} \cong \calE_2|_{C_i}$ for each $i$.
\end{cor}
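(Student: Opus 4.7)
The forward direction is immediate from functoriality of restriction, so my plan concerns the converse. The approach is to reformulate the existence of an isomorphism as the existence of distinguished global sections of internal Hom objects, and then invoke Lemma~\ref{L:H0 on curves} to lift an isomorphism from a single well-chosen curve in the sequence up to all of $X$.

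Concretely, I would introduce the convergent $F$-isocrystal $\calH := \End_{\FIsoc(X)}(\calE_1 \oplus \calE_2)$, which decomposes as a direct sum of the four internal Hom objects $\Hom(\calE_\alpha, \calE_\beta)$ with $\alpha,\beta \in \{1,2\}$. Applying Lemma~\ref{L:H0 on curves} to $\calH$ yields an exhaustive subsequence $C_{i_1}, C_{i_2}, \dots$ for which the restriction map $H^0_F(X, \calH) \to H^0_F(C_{i_j}, \calH|_{C_{i_j}})$ is an isomorphism for every $j$. Reading off the four summands, this gives a bijection
\[
\Hom_{\FIsoc(X)}(\calE_\alpha, \calE_\beta) \xrightarrow{\sim} \Hom_{\FIsoc(C_{i_j})}(\calE_\alpha|_{C_{i_j}}, \calE_\beta|_{C_{i_j}})
\]
for every pair $(\alpha,\beta)$ and every $j$.

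Next I would fix a single index, say $j = 1$, and use the hypothesis to choose an isomorphism $\phi_0 : \calE_1|_{C_{i_1}} \to \calE_2|_{C_{i_1}}$ together with its inverse $\psi_0$. The bijections above provide unique global lifts $\phi : \calE_1 \to \calE_2$ and $\psi : \calE_2 \to \calE_1$ in $\FIsoc(X)$. The compositions $\psi \circ \phi \in \End_{\FIsoc(X)}(\calE_1)$ and $\phi \circ \psi \in \End_{\FIsoc(X)}(\calE_2)$ restrict on $C_{i_1}$ to $\psi_0 \circ \phi_0 = \id$ and $\phi_0 \circ \psi_0 = \id$, respectively; by injectivity of the restriction maps on the $\End$ summands, they must already equal $\id_{\calE_1}$ and $\id_{\calE_2}$. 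Thus $\phi$ is the desired isomorphism.

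The main conceptual pitfall (rather than technical obstacle) I anticipate is this final verification. The naive strategy of lifting just one nonzero Hom and arguing it is invertible does not work, because a nonzero morphism between convergent $F$-isocrystals of the same rank need not be an isomorphism---it can fail to be full-rank on some fibers. Lifting the inverse in parallel and detecting the identity through the $\End$ components neatly circumvents this issue and is essentially the substance of the argument; everything else is bookkeeping with Lemma~\ref{L:H0 on curves}.
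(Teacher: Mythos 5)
Your proof is correct, but the concluding step differs from the paper's, and your stated justification for the extra work is based on a misconception. The paper applies Lemma~\ref{L:H0 on curves} only to $\calE := \calE_1^\dual \otimes \calE_2$, lifts a single section corresponding to an isomorphism on one curve $C_i$, and immediately concludes that the resulting morphism $\calE_1 \to \calE_2$ is an isomorphism ``because the kernel has locally constant rank.'' This is exactly the ``naive'' route you dismiss. Your worry---that a nonzero morphism between convergent $F$-isocrystals of equal rank could be full-rank on some fibers but not others---is not actually a possibility here: $\FIsoc(X)$ is an abelian category, so the kernel and cokernel of the lifted morphism are themselves objects of $\FIsoc(X)$, and since $X$ is geometrically irreducible such objects have a globally constant rank. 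Vanishing of that rank on $C_i$ therefore forces vanishing everywhere. Your variant---applying Lemma~\ref{L:H0 on curves} to $\End(\calE_1 \oplus \calE_2)$, lifting both $\phi_0$ and its inverse $\psi_0$, and checking that the compositions restrict to the identity---is a perfectly valid alternative that sidesteps the constant-rank observation by routing everything through injectivity of restriction on the $\End$ summands; it costs a slightly larger auxiliary object and one more appeal to Lemma~\ref{L:H0 on curves}'s output, but buys nothing that the paper's shorter argument does not already provide.
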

\begin{proof}
We need only check the ``if'' direction.
Put $\calE := \calE_1^\dual \otimes \calE_2$;
by Lemma~\ref{L:H0 on curves}, after replacing the original sequence with an exhaustive subsequence,
we have isomorphisms $H^0_F(X, \calE) \cong H^0_F(C_i, \calE|_{C_i})$ for each $i$. Pick some $i$ and choose an element
$\bv \in H^0_F(C_i, \calE|_{C_i})$ corresponding to an isomorphism $\calE_1|_{C_i} \cong \calE_2|_{C_i}$. The corresponding element
of $H^0_F(X, \calE)$ corresponds to a morphism from $\calE_1$ to $\calE_2$ which restricts to an isomorphism on $C_i$; it must thus be an isomorphism
because the kernel has locally constant rank.
\end{proof}

\section{Convergent vs. overconvergent}

We next introduce the category of overconvergent $F$-isocrystals, again largely to set notation.

\begin{defn}
Let $\FIsoc^\dagger(X)$ be the category of overconvergent $F$-isocrystals on $X$ \cite[Definition~2.4]{kedlaya-isocrystals}; it is a $K$-linear abelian tensor category with unit object denoted by $\calO^\dagger$.
For $*^\dagger \in \FIsoc^\dagger(X)$, remove the dagger to denote the image of this object in $\FIsoc(X)$;
in light of Lemma~\ref{L:fully faithful2} below, we will view $\FIsoc^\dagger(X)$ as a full subcategory of $\FIsoc(X)$.
\end{defn}

\begin{lemma} \label{L:pullback by dominant morphism}
Let $f: Y \to X$ be a dominant morphism of smooth, geometrically irreducible $k$-schemes. Then the natural map
\[
\FIsoc^\dagger(X) \to \FIsoc(X) \times_{\FIsoc(Y)} \FIsoc^\dagger(Y)
\]
is an equivalence of categories.
\end{lemma}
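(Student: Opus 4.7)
The plan is to check the functor is fully faithful and essentially surjective, with the latter containing the substance.

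For full faithfulness, fix $\calE_1^\dagger, \calE_2^\dagger \in \FIsoc^\dagger(X)$. A morphism in the fiber product between their images is a pair $(\phi, \psi)$ with $\phi \in \Hom_{\FIsoc(X)}(\calE_1, \calE_2)$ and $\psi \in \Hom_{\FIsoc^\dagger(Y)}(f^*\calE_1^\dagger, f^*\calE_2^\dagger)$ whose images in $\FIsoc(Y)$ agree. By Lemma~\ref{L:fully faithful2} applied on $Y$, the morphism $\psi$ is uniquely determined by $f^*\phi$, and by the same lemma on $X$, $\phi$ lifts uniquely to an overconvergent morphism $\phi^\dagger$. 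Thus morphisms in the fiber product canonically biject with $\Hom_{\FIsoc^\dagger(X)}(\calE_1^\dagger, \calE_2^\dagger)$.

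For essential surjectivity, given $(\calE, \calF^\dagger, \alpha)$, I would use $\alpha$ to transport the overconvergent structure of $\calF^\dagger$ onto $f^*\calE$. By the uniqueness of overconvergent lifts (Lemma~\ref{L:fully faithful2}), the existence of a lift $\calE^\dagger \in \FIsoc^\dagger(X)$ of $\calE$ automatically supplies the remaining compatibilities, so the problem reduces to the following descent statement: if $f: Y \to X$ is dominant and $f^*\calE$ is overconvergent, then $\calE$ is overconvergent.

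I would execute this descent in two steps. First, reduce to $X$ affine via the Zariski-locality of the essential image (itself a formal consequence of the uniqueness in Lemma~\ref{L:fully faithful2}). Then, after shrinking $Y$ to a suitable open subscheme, factor $f$ as $Y \to V \hookrightarrow X$, where $V$ is an open dense subscheme of $X$ contained in $f(Y)$ and $Y \to V$ is smooth and surjective. For the smooth surjective factor, known descent results for overconvergent $F$-isocrystals (\'Etesse--Le Stum, Tsuzuki) yield an overconvergent structure on $\calE|_V$; for the open immersion factor, an extension theorem (relying on the explicit description of overconvergence via strict-neighborhood extensions of the tube, together with the already-given convergent structure of $\calE$ on $X$) upgrades this to an overconvergent structure on $\calE$ over all of $X$.

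The main obstacle is this last extension step across the boundary $X \setminus V$. Overconvergence encodes growth conditions near the boundary, so extending an overconvergent isocrystal from a dense open to the whole smooth variety is not automatic; the availability of the global convergent $\calE$ is crucial, and making the extension precise---likely by appealing to existing results in \cite{kedlaya-isocrystals}---is where the core technical input lies.
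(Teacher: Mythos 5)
The paper's own proof is a one-line citation of \cite[Corollary~5.9]{kedlaya-isocrystals}, so there is no in-paper argument to compare against; your proposal is a genuine attempt to reconstruct it. Your full-faithfulness argument is correct as a formal consequence of Lemma~\ref{L:fully faithful2} applied on $X$ and on $Y$, and your reduction of essential surjectivity to the statement ``if $f^*\calE$ is overconvergent, so is $\calE$'' is also the right move.

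The factorization step, however, has a genuine gap in characteristic $p$: you cannot in general shrink $Y$ so that $Y \to V$ becomes smooth (or even generically smooth). The $p$-power Frobenius $F\colon X \to X$ is dominant but nowhere smooth in the relative sense, and removing a closed subset of the source does not repair this; more generally, any generically inseparable dominant $f$ defeats the factorization. Since Frobenius is exactly the sort of dominant morphism this lemma must accommodate, the proof as written does not cover the stated generality. To salvage the approach one would have to factor $f$, generically, as a composition of a Frobenius power, a smooth surjective morphism, and an open immersion, and then observe that Frobenius pullback is an equivalence on both $\FIsoc$ and $\FIsoc^\dagger$ (hence reflects overconvergence). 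Separately, the extension across the open immersion $V \hookrightarrow X$, which you correctly flag as the technical heart, is itself a nontrivial special case of the lemma being proved and is not derived here from anything more elementary; at the level of detail given, the proposal reduces the general case to that hardest case without discharging it.
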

\begin{proof}
See \cite[Corollary~5.9]{kedlaya-isocrystals}.
\end{proof}

\begin{defn}
For $\calE^\dagger \in \FIsoc^\dagger(X)$, let $H^i(X, \calE^\dagger)$ denote the de Rham cohomology of $\calE^\dagger$, equipped with its Frobenius action; 
it is a finite-dimensional $K$-vector space \cite[Theorem~8.4]{kedlaya-isocrystals}.
Define also
\[
H^0_F(X, \calE^\dagger) = \Hom_{\FIsoc^\dagger(X)}(\calO^\dagger, \calE^\dagger), \qquad H^1_F(X, \calE^\dagger) = \Ext^1_{\FIsoc^\dagger(X)}(\calO^\dagger, \calE^\dagger). 
\]
We then have a natural identification
\[
H^0_F(X, \calE^\dagger) \cong H^0(X, \calE^\dagger)^{\varphi}
\]
and a natural short exact sequence (Hochschild--Serre)
\begin{equation} \label{eq:H1F overconvergent}
H^0(X, \calE^\dagger)_{\varphi} \to H^1_F(X, \calE^\dagger) \to H^1(X, \calE^\dagger)^{\varphi}.
\end{equation}
\end{defn}

\begin{lemma} \label{L:fully faithful2}
For $\calE^\dagger \in \FIsoc^\dagger(X)$, the natural map $H^0_F(X, \calE^\dagger) \to H^0_F(X, \calE)$ is an isomorphism. Consequently (by considering internal Homs in $\FIsoc^\dagger(X)$),
the restriction functor $\FIsoc^\dagger(X) \to \FIsoc(X)$ is fully faithful. (We will hereafter identify $\FIsoc^\dagger(X)$ with a full subcategory of $\FIsoc(X)$.)
\end{lemma}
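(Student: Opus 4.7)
The plan is to reduce everything to the first assertion, that $H^0_F(X, \calE^\dagger) \to H^0_F(X, \calE)$ is a bijection. Granted this, the parenthetical full faithfulness statement follows by applying the $H^0_F$-bijection to the internal Hom $(\calE^\dagger)^\dual \otimes \calF^\dagger$: for $\calE^\dagger, \calF^\dagger \in \FIsoc^\dagger(X)$ one has
\[
\Hom_{\FIsoc^\dagger(X)}(\calE^\dagger, \calF^\dagger) = H^0_F(X, (\calE^\dagger)^\dual \otimes \calF^\dagger),
\]
and the analogous identity on the convergent side then yields the bijection on morphism sets.

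To prove the $H^0_F$-isomorphism, I would work affine-locally via Definition~\ref{D:convergent F-isocrystal}: fix a smooth affine formal lift $P$ of $X$ with Frobenius lift $\sigma$, so that $\calE$ corresponds to a finite projective module over $R := \Gamma(P,\calO)[p^{-1}]$, while $\calE^\dagger$ corresponds to a finite projective module over the overconvergent subring $R^\dagger \subset R$ of functions that extend to some strict neighborhood of the tube of $X$ in $P$, with $\calE = \calE^\dagger \otimes_{R^\dagger} R$. Injectivity of the $H^0_F$-map is then transparent: since $R^\dagger \hookrightarrow R$ and $\calE^\dagger$ is finite projective, $\calE^\dagger \hookrightarrow \calE$, so a horizontal, $\varphi$-invariant section of $\calE^\dagger$ that vanishes in $\calE$ is already zero.

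The heart of the argument is surjectivity: every horizontal, $\varphi$-invariant section $s \in \calE$ already lies in the overconvergent submodule $\calE^\dagger$. This is Dwork's ``Frobenius antecedent'' phenomenon. Choose a basis of $\calE^\dagger$ over $R^\dagger$ in which the Frobenius acts via a matrix $A \in \GL_n(R^\dagger[p^{-1}])$; then the Frobenius-equivariance of $s$ reads $s = A \cdot \sigma^*(s)$, which iterates to $s = A_n \sigma^{*n}(s)$ for $A_n := A \sigma^*(A) \cdots \sigma^{*(n-1)}(A)$. The overconvergence of $A$ (hence of each $A_n$) together with the contracting behavior of $\sigma^n$ on the tube of $X$ imposes overconvergent growth estimates on the coordinates of $s$; in the limit one concludes $s \in \calE^\dagger$. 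This is the content of \cite[Theorem~5.3]{kedlaya-isocrystals}, which also provided Lemma~\ref{L:fully faithful}.

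The main obstacle is precisely this Dwork-style convergence analysis: it genuinely uses the Frobenius structure (the analogous statement for isocrystals without Frobenius is false) and requires careful tracking of radii of convergence through the iteration. Given the $H^0_F$-bijection, however, the rest of the lemma, including the identification of $\FIsoc^\dagger(X)$ with a full subcategory of $\FIsoc(X)$, is purely formal.
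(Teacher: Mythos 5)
Your proof matches the paper's, whose entire content is the citation to \cite[Theorem~5.3]{kedlaya-isocrystals}; you correctly reduce the full faithfulness to the $H^0_F$-bijection via internal Homs and land on the same reference. One caution about your heuristic sketch of what lies inside that citation: the iteration $s = A_n\,\sigma^{*n}(s)$ does not by itself bootstrap convergence on the tube to overconvergence --- for $\sigma(t) = t^p$ one has $\sigma^{-n}(\{|t|\leq\rho\}) = \{|t|\leq\rho^{1/p^n}\}$, which \emph{shrinks} toward the tube as $n\to\infty$, so the relation $s = A_n\sigma^{*n}(s)$ only reproduces convergence on the tube even when $A$ and $A^{-1}$ are integral and overconvergent; the actual proof of the cited theorem is deeper, going through the slope filtration over the Robba ring and, in effect, the $p$-adic local monodromy theorem, rather than a bare Dwork-antecedent contraction. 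Since that sketch is not load-bearing (the citation does the work), your proposal is correct as written.
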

\begin{proof}
See again \cite[Theorem~5.3]{kedlaya-isocrystals}.
\end{proof}

\begin{cor} \label{C:overconvergent to convergent injective H1}
For $\calE^\dagger \in \FIsoc^\dagger(X)$, the natural map $H^1_F(X, \calE^\dagger) \to H^1_F(X, \calE)$ is injective.
\end{cor}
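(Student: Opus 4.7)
The plan is to mimic exactly the proof of Corollary~\ref{C:fully faithful}, replacing the use of Lemma~\ref{L:fully faithful} with Lemma~\ref{L:fully faithful2}. The key observation is that $H^1_F(X, \calE^\dagger) = \Ext^1_{\FIsoc^\dagger(X)}(\calO^\dagger, \calE^\dagger)$ classifies short exact sequences $0 \to \calE^\dagger \to \calF^\dagger \to \calO^\dagger \to 0$ in $\FIsoc^\dagger(X)$, and the map to $H^1_F(X, \calE)$ sends such a class to the class of the corresponding sequence $0 \to \calE \to \calF \to \calO \to 0$ in $\FIsoc(X)$, obtained by applying the fully faithful embedding of Lemma~\ref{L:fully faithful2}. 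Injectivity of the map on $\Ext^1$ groups therefore reduces to the assertion that if the image sequence splits in $\FIsoc(X)$, then the original sequence splits in $\FIsoc^\dagger(X)$.

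To verify this, I would take a splitting $s : \calO \to \calF$ of the underlying convergent sequence. Since $\FIsoc^\dagger(X) \hookrightarrow \FIsoc(X)$ is fully faithful (Lemma~\ref{L:fully faithful2}), the morphism $s$ lifts uniquely to a morphism $s^\dagger : \calO^\dagger \to \calF^\dagger$ in $\FIsoc^\dagger(X)$. The composition of $s^\dagger$ with the surjection $\calF^\dagger \to \calO^\dagger$ is an endomorphism of $\calO^\dagger$ whose image in $\FIsoc(X)$ is the identity of $\calO$; by fully faithfulness again, it must equal $\id_{\calO^\dagger}$, so $s^\dagger$ is a genuine splitting in $\FIsoc^\dagger(X)$.

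There is no real obstacle here: once one knows that $\Hom$ is preserved by the restriction functor, the same is true for splittings, since being a splitting is a condition on $\Hom$ sets. The only thing one has to be mildly careful about is writing the reduction from injectivity of $\Ext^1 \to \Ext^1$ to the splitting criterion, i.e., noting that the kernel of the map consists of classes whose image in $H^1_F(X, \calE)$ is represented by a split extension. Accordingly, the write-up can be a one-line proof referencing Lemma~\ref{L:fully faithful2} exactly as Corollary~\ref{C:fully faithful} references Lemma~\ref{L:fully faithful}.
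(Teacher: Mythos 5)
Your proposal is correct and matches the paper's argument, which likewise reduces injectivity of the $\Ext^1$ map to the statement that a splitting in $\FIsoc(X)$ lifts to $\FIsoc^\dagger(X)$ via the full faithfulness of Lemma~\ref{L:fully faithful2}, in direct analogy with Corollary~\ref{C:fully faithful}.
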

\begin{proof}
This follows from Lemma~\ref{L:fully faithful2} by analogy with Corollary~\ref{C:fully faithful}.
\end{proof}

\begin{lemma} \label{L:lift filtration}
Choose $\calE^\dagger \in \FIsoc^\dagger(X)$. Suppose that $\calE$ admits a filtration in $\FIsoc(X)$
\[
0 = \calE_0 \subset \cdots \subset \calE_l = \calE
\]
such that for $i=1,\dots,l$, $\calE_i/\calE_{i-1}$ is the restriction of an object of $\FIsoc^\dagger(X)$. Then the filtration itself lifts to a filtration 
of $\calE^\dagger$ in $\FIsoc^\dagger(X)$.
\end{lemma}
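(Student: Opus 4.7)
The idea is to peel the filtration off from the top by induction on $l$, at each step using the full faithfulness of Lemma~\ref{L:fully faithful2} to lift a convergent quotient map to an overconvergent quotient map, and then forming the kernel inside $\FIsoc^\dagger(X)$.

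More precisely, I would proceed by descending induction: assume that I have already produced a subobject $\calE_i^\dagger \subset \calE^\dagger$ in $\FIsoc^\dagger(X)$ whose image under the restriction functor is the subobject $\calE_i \subset \calE$ (which for $i = l$ is just $\calE^\dagger$ itself). By hypothesis, there is an object $\calF_i^\dagger \in \FIsoc^\dagger(X)$ with $\calF_i \cong \calE_i/\calE_{i-1}$ in $\FIsoc(X)$. The composition
\[
\calE_i \twoheadrightarrow \calE_i/\calE_{i-1} \xrightarrow{\sim} \calF_i
\]
is a morphism in $\FIsoc(X)$ between the underlying convergent objects of $\calE_i^\dagger$ and $\calF_i^\dagger$; by Lemma~\ref{L:fully faithful2}, it lifts uniquely to a morphism $\calE_i^\dagger \to \calF_i^\dagger$ in $\FIsoc^\dagger(X)$. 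I then set $\calE_{i-1}^\dagger$ to be the kernel of this morphism, taken inside the abelian category $\FIsoc^\dagger(X)$. Since the embedding $\FIsoc^\dagger(X) \hookrightarrow \FIsoc(X)$ is an exact functor between abelian categories (kernels and cokernels of morphisms of overconvergent isocrystals are computed on the underlying bundle-with-connection data, and inherit overconvergence), the underlying convergent object of $\calE_{i-1}^\dagger$ is the kernel of $\calE_i \to \calF_i$, which is $\calE_{i-1}$. This completes the inductive step, and iterating down to $i = 0$ produces the desired lifted filtration.

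The only nontrivial ingredient is the lifting of the quotient map to $\FIsoc^\dagger(X)$, and this is exactly the content of Lemma~\ref{L:fully faithful2}, so there is no real obstacle. The potential pitfall to be careful about is ensuring that forming a kernel inside $\FIsoc^\dagger(X)$ genuinely recovers $\calE_{i-1}$ on the convergent side; this amounts to the exactness of the restriction functor, which I would record explicitly (or cite from \cite{kedlaya-isocrystals}) before invoking it.
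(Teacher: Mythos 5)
Your proposal is correct and follows essentially the same strategy as the paper's proof: induct on $l$ using Lemma~\ref{L:fully faithful2} to lift a morphism of convergent $F$-isocrystals to the overconvergent category, then form the appropriate (co)kernel inside $\FIsoc^\dagger(X)$ and use exactness of the restriction functor to identify it with the corresponding step of the convergent filtration. The paper runs the induction from the bottom of the filtration (lift the inclusion $\calE_1 \hookrightarrow \calE$ to $\calE_1^\dagger \hookrightarrow \calE^\dagger$, then pass to $\calE^\dagger/\calE_1^\dagger$) whereas you run it from the top (lift the quotient $\calE_i \twoheadrightarrow \calF_i$ and take its kernel); these are mirror images of the same argument, and your explicit remark that the restriction functor must be exact is a point the paper leaves implicit.
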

\begin{proof}
By hypothesis, $\calE_1$ lifts to some $\calE_1^\dagger \in \FIsoc^\dagger(X)$.
By Lemma~\ref{L:fully faithful2}, the morphism $\calE_1 \to \calE$ lifts to a morphism $\calE_1^\dagger \to \calE^\dagger$ in $\FIsoc^\dagger(X)$, which is again an inclusion.
We may thus pass to $\calE^\dagger/\calE_1^\dagger$ and argue by induction on $l$ to conclude.
\end{proof}

\section{Logarithmic $F$-isocrystals}

As an extra tool to study overconvergent $F$-isocrystals, we introduce logarithmic convergent $F$-isocrystals.

\begin{defn}
Throughout this section, assume that $X$ admits a good compactification $\overline{X}$ with boundary $Z$ (meaning that $\overline{X}$ is smooth and $Z$ is a strict normal crossings divisor on $\overline{X}$).
We view $\overline{X}$ as a log-scheme using the logarithmic structure defined by $Z$.

Let $\FIsoc(\overline{X}, Z)$ denote the category of convergent log-$F$-isocrystals on $\overline{X}$ with nilpotent residues; it is a $K$-linear abelian tensor category with unit object denoted by $\calO$. There is a restriction functor 
$\FIsoc(\overline{X}, Z) \to \FIsoc^\dagger(X)$, via which (in light of Lemma~\ref{L:fully faithful3} below) we will identify
$\FIsoc(\overline{X}, Z)$ with a full subcategory of $\FIsoc^\dagger(X)$; in the language of \cite{kedlaya-companions1, kedlaya-companions2} these are the
\emph{docile} objects of $\FIsoc^\dagger(X)$.

For $\calE \in \FIsoc(\overline{X}, Z)$, let $H^i(\overline{X}, \calE)$ denote the de Rham cohomology of $\calE$, equipped with its Frobenius action; define also
\[
H^0_F(\overline{X}, \calE) := \Hom_{\FIsoc(\overline{X}, Z)}(\calO, \calE), \qquad H^1_F(\overline{X}, \calE) := \Ext^1_{\FIsoc(\overline{X}, Z)}(\calO, \calE). 
\]
We then have a natural identification
\[
H^0_F(\overline{X}, \calE) \cong H^0(\overline{X}, \calE)^{\varphi}
\]
and a natural short exact sequence
\[
H^0(\overline{X}, \calE)_{\varphi} \to H^1_F(\overline{X}, \calE) \to H^1(\overline{X}, \calE)^{\varphi}.
\]
\end{defn}

\begin{lemma} \label{L:fully faithful3}
For $\calE \in \FIsoc(\overline{X}, Z)$, the natural map $H^0_F(\overline{X}, \calE) \to H^0_F(X, \calE)$ is an isomorphism. Consequently (by considering internal Homs in $\FIsoc(\overline{X}, Z)$),
the restriction functor $\FIsoc(\overline{X}, Z) \to \FIsoc^\dagger(X)$ is fully faithful.
\end{lemma}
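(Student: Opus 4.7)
The approach is to follow the same pattern as Lemmas \ref{L:fully faithful} and \ref{L:fully faithful2}: reduce the full faithfulness assertion to the $H^0_F$ statement via internal Homs, then prove the $H^0_F$ isomorphism directly. Specifically, for $\calE_1, \calE_2 \in \FIsoc(\overline{X}, Z)$, the tensor structure gives $\Hom_{\FIsoc(\overline{X}, Z)}(\calE_1, \calE_2) \cong H^0_F(\overline{X}, \calE_1^\dual \otimes \calE_2)$, while on the restricted side $\Hom_{\FIsoc^\dagger(X)}(\calE_1|_X, \calE_2|_X) \cong H^0_F(X, \calE_1^\dual \otimes \calE_2)$, combining the definition with Lemma \ref{L:fully faithful2}. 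Thus once the $H^0_F$ map is an isomorphism on $\calE_1^\dual \otimes \calE_2$, full faithfulness follows.

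For the $H^0_F$ isomorphism itself, injectivity is essentially formal: since $X$ is dense in $\overline{X}$ and the underlying log isocrystal is locally free, any horizontal section over $\overline{X}$ vanishing on $X$ must vanish everywhere. The main step is surjectivity: given a horizontal, Frobenius-equivariant section $s$ of $\calE$ over the convergent site of $X$, one must show that $s$ extends to a horizontal section of the log convergent structure over $\overline{X}$. Working locally in coordinates $t_1, \dots, t_n$ near a point of $Z$ with $Z = \{t_1 \cdots t_r = 0\}$, the log connection is determined by residue endomorphisms along each component of $Z$, which are nilpotent by hypothesis. The plan is to expand $s$ in a Laurent-type series in $t_1, \dots, t_r$ (allowing $\log t_i$ a priori) and use the nilpotency of the residues to rule out negative powers and log terms, so that $s$ extends regularly across $Z$. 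This is a convergent-isocrystal analogue of Deligne's canonical extension for regular singular connections; the Frobenius-equivariance is preserved automatically because one may choose a Frobenius lift on the ambient formal scheme compatible with the log structure.

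I expect the main obstacle to be controlling the analytic behavior of the extended section across the boundary: convergence on the tube of $X$ is given, but extending to the tube of $\overline{X}$ requires combining that bound with the nilpotent residues condition to rule out both monodromy and pole growth in the directions normal to $Z$. This technical core is precisely what is bundled into \cite[Theorem~5.3]{kedlaya-isocrystals}, which establishes all three fully faithful statements (Lemmas \ref{L:fully faithful}, \ref{L:fully faithful2}, and the present lemma) in a uniform manner; accordingly, I expect the proof here to consist of another direct appeal to that result.
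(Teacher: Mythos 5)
Your overall prediction is right in spirit but wrong in one concrete detail: the paper's proof of this lemma is indeed a one-line citation into \cite{kedlaya-isocrystals}, but to Theorem~7.3 there, not Theorem~5.3. Theorem~5.3 of that reference is the full faithfulness of restriction along a dense open immersion, which is what is invoked for Lemmas~\ref{L:fully faithful} and \ref{L:fully faithful2}; it says nothing about the log-convergent site. The statement you need here --- that a horizontal Frobenius-equivariant section of a log-convergent $F$-isocrystal with nilpotent residues is determined by, and can be recovered from, its restriction to the overconvergent isocrystal on $X$ --- is a genuinely separate assertion about extending across the boundary divisor $Z$, and it is packaged as Theorem~7.3. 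So your claim that a single theorem (5.3) ``establishes all three fully faithful statements in a uniform manner'' is not accurate: the log case requires its own result, which is proved later in that reference precisely because it involves the additional input of the nilpotent-residue hypothesis. Your heuristic sketch of why the surjectivity should hold (expanding in logarithmic coordinates, using nilpotent residues to suppress negative powers and $\log t_i$ terms, \`a la Deligne's canonical extension) is a reasonable description of the sort of argument underlying Theorem~7.3, but none of that appears in this paper, which simply cites the result.
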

\begin{proof}
See \cite[Theorem~7.3]{kedlaya-isocrystals}.
\end{proof}
\begin{cor} \label{C:log to overconvergent injective H1}
For $\calE \in \FIsoc(\overline{X}, Z)$, the natural map $H^1_F(\overline{X}, \calE) \to H^1_F(X, \calE)$ is injective.
\end{cor}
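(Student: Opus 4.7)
The plan is to mimic verbatim the argument used for Corollary~\ref{C:fully faithful} and Corollary~\ref{C:overconvergent to convergent injective H1}: once we have fully faithfulness of the inclusion $\FIsoc(\overline{X},Z) \hookrightarrow \FIsoc^\dagger(X) \hookrightarrow \FIsoc(X)$, the map on $\Ext^1(\calO,-)$ is automatically injective because splittings of extensions descend along a fully faithful functor.

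In more detail, a class in $H^1_F(\overline{X},\calE) = \Ext^1_{\FIsoc(\overline{X},Z)}(\calO,\calE)$ is represented by a short exact sequence
\[
0 \to \calE \to \calF \to \calO \to 0
\]
in $\FIsoc(\overline{X},Z)$, and its image in $H^1_F(X,\calE)$ is represented by the restriction of the same sequence to $\FIsoc(X)$. The class in $H^1_F(\overline{X},\calE)$ vanishes precisely when this sequence admits a splitting $s \colon \calO \to \calF$ in $\FIsoc(\overline{X},Z)$, and the class in $H^1_F(X,\calE)$ vanishes precisely when the restricted sequence splits in $\FIsoc(X)$. So I just need: any splitting in $\FIsoc(X)$ is (uniquely) the restriction of a splitting in $\FIsoc(\overline{X},Z)$.

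This is where Lemma~\ref{L:fully faithful3} and Lemma~\ref{L:fully faithful2} enter. Composing them, the restriction functor $\FIsoc(\overline{X},Z) \to \FIsoc(X)$ is fully faithful, so the natural map
\[
\Hom_{\FIsoc(\overline{X},Z)}(\calO,\calF) \to \Hom_{\FIsoc(X)}(\calO,\calF)
\]
is a bijection. Hence any section $s$ of $\calF \to \calO$ in $\FIsoc(X)$ comes from a unique morphism $\tilde{s} \colon \calO \to \calF$ in $\FIsoc(\overline{X},Z)$; the composite $\calO \xrightarrow{\tilde{s}} \calF \to \calO$ is a morphism in $\FIsoc(\overline{X},Z)$ whose restriction to $\FIsoc(X)$ is the identity, so by full faithfulness again it is the identity in $\FIsoc(\overline{X},Z)$. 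Thus $\tilde{s}$ splits the original sequence and the class vanishes in $H^1_F(\overline{X},\calE)$.

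There is no real obstacle here: the statement is a formal consequence of Lemma~\ref{L:fully faithful3} in exactly the same way that Corollary~\ref{C:fully faithful} follows from Lemma~\ref{L:fully faithful} and Corollary~\ref{C:overconvergent to convergent injective H1} follows from Lemma~\ref{L:fully faithful2}. The only mild point to verify is that the two notions of ``the map on $H^1_F$'' — one defined via $\Ext^1$ in the respective categories, one defined via extensions — agree, but this is immediate from the definition of $\Ext^1$ via Yoneda extensions.
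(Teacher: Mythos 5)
Your proposal is correct and is exactly the argument the paper has in mind: the paper's own proof is just the one-liner ``Immediate from Lemma~\ref{L:fully faithful3},'' with the understanding that full faithfulness forces injectivity on $\Ext^1(\calO,-)$ because a splitting in the target category lifts along the fully faithful restriction functor, precisely as in Corollaries~\ref{C:fully faithful} and~\ref{C:overconvergent to convergent injective H1}. The only minor remark is that the target $H^1_F(X,\calE)$ in the corollary is most naturally read as $\Ext^1_{\FIsoc^\dagger(X)}(\calO^\dagger,\calE)$ (so Lemma~\ref{L:fully faithful3} alone suffices), whereas you route through $\FIsoc(X)$; this proves a slightly stronger statement and is of course still fine, since injectivity of the composite implies injectivity of the first map.
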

\begin{proof}
Immediate from Lemma~\ref{L:fully faithful3}.
\end{proof}

\begin{lemma} \label{L:log extensions}
The subcategory $\FIsoc(\overline{X}, Z)$ of $\FIsoc^\dagger(X)$ is closed under formation of subquotients and extensions.
\end{lemma}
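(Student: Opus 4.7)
The plan is to reduce both closure statements to linear-algebraic properties of nilpotent endomorphisms, after producing appropriate logarithmic extensions of the overconvergent objects in question. Recall that an object of $\FIsoc^\dagger(X)$ lies in $\FIsoc(\overline{X}, Z)$ precisely when it admits a log extension whose residue along each component of $Z$ is a nilpotent endomorphism of the restriction. The heart of the matter is to produce such extensions in the two scenarios (subobjects inside a docile object, and extensions with docile flanks) and then to read off nilpotency.

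For closure under subquotients, let $\calE \in \FIsoc(\overline{X}, Z)$ and let $\calF \hookrightarrow \calE$ be a monomorphism in $\FIsoc^\dagger(X)$. I would construct a log-subobject $\tilde{\calF} \subseteq \calE$ in $\FIsoc(\overline{X}, Z)$ restricting to $\calF$ on $X$: working locally around each component of $Z$, define $\tilde{\calF}$ as the saturation inside $\calE$ of the coherent subsheaf generated by $\calF$ (viewed via $X \hookrightarrow \overline{X}$), and verify that the log connection on $\calE$ preserves this subsheaf since it preserves $\calF$ on the open part and the saturation is determined by the generic fiber. The residue of $\tilde{\calF}$ along any component of $Z$ is then the restriction of the residue of $\calE$ to a subspace invariant under that operator, hence nilpotent. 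Full faithfulness (Lemma~\ref{L:fully faithful3}) then identifies $\tilde{\calF}$ with $\calF$ in $\FIsoc^\dagger(X)$, so $\calF \in \FIsoc(\overline{X}, Z)$. The case of quotients follows by dualizing (which preserves $\FIsoc(\overline{X}, Z)$ since the residue of a dual is the negative transpose of the original) and invoking the sub case.

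For closure under extensions, given $0 \to \calE_1 \to \calE \to \calE_2 \to 0$ in $\FIsoc^\dagger(X)$ with $\calE_1, \calE_2 \in \FIsoc(\overline{X}, Z)$, I would construct a log extension $\tilde{\calE}$ of $\calE$ locally by gluing the log vector bundles $\calE_1$ and $\calE_2$ according to the cocycle data of the overconvergent extension. Relative to a local basis adapted to the filtration $0 \subset \calE_1 \subset \tilde{\calE}$, the residue of $\tilde{\calE}$ along any component of $Z$ is block upper-triangular with diagonal blocks equal to the nilpotent residues of $\calE_1$ and $\calE_2$; such a matrix is itself nilpotent, so $\tilde{\calE} \in \FIsoc(\overline{X}, Z)$ and $\calE$ is docile. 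The principal technical obstacle in both parts is the local construction of $\tilde{\calF}$ (respectively $\tilde{\calE}$) as a vector bundle with log-integrable connection on a formal neighborhood of each component of $Z$, including the verification of local freeness of the saturation and of the compatibility with the log connection; these are handled by explicit local analysis exploiting that $Z$ is a simple normal crossings divisor on the smooth scheme $\overline{X}$.
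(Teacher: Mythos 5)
The paper itself offers no argument here: the proof is a one-line citation to \cite[Remark~1.4.2]{kedlaya-companions1} and the references therein (principally Kedlaya's semistable reduction paper and Shiho's work on logarithmic extension), which characterize $\FIsoc(\overline{X}, Z)$ inside $\FIsoc^\dagger(X)$ via a \emph{local} condition along $Z$ (unipotence/nilpotent residue at the generic points of the components of $Z$), from which stability under subquotients and extensions follows because that condition is visibly stable.

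Your approach --- directly constructing the log-extension by saturation (for subobjects) and gluing (for extensions), then reading off nilpotency of the induced residues --- is the natural ``Deligne canonical extension'' argument, and the linear algebra at the end (restriction of a nilpotent operator to an invariant subspace is nilpotent; a block-triangular matrix with nilpotent diagonal blocks is nilpotent) is fine. However, the sheaf-theoretic steps have genuine gaps, and they are precisely the parts that the cited literature is needed for. First, a subobject $\calF$ of $\calE$ in $\FIsoc^\dagger(X)$ lives only on a strict neighborhood of $]X[$ inside $]\overline{X}[$, so ``the coherent subsheaf generated by $\calF$'' inside the log-extension $\overline{\calE}$ is not yet defined over $]Z[$; producing a coherent extension across the tube of $Z$ is not a formality and is the real content. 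Second, even granting the extension exists, the saturation need not be locally free in codimension $\geq 2$ (near the strata where components of $Z$ meet); the paper's own treatment of a closely analogous construction (Lemma~\ref{L:overconvergent extension from curves}) first works over $\overline{X} \setminus W$ with $W$ the nonsmooth locus of $Z$ and then invokes a purity theorem \cite[Theorem~7.4]{kedlaya-isocrystals} to cross $W$; that step is absent from your sketch. Third, for extensions, ``gluing the log vector bundles according to the cocycle data of the overconvergent extension'' does not make sense as stated: the cocycle for $\calE$ is given on a cover of the strict neighborhood of $]X[$, and one must show it lifts to a cocycle over $]\overline{X}[$, i.e.\ that the extension class in $H^1_F(X, \calE_2^\dual\otimes\calE_1)$ lies in the image of $H^1_F(\overline{X}, \calE_2^\dual\otimes\calE_1)$; Corollary~\ref{C:log to overconvergent injective H1} gives injectivity of that map but not the required surjectivity onto the given class. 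Finally, compatibility of the constructed extension with the Frobenius structure is asserted implicitly but never checked. The more robust route, and the one the cited remark rests on, is to reduce to unipotence of local monodromy at the generic points of the components of $Z$, where stability under subquotients and extensions is immediate, and then invoke the log-extension and purity results to globalize.
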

\begin{proof}
See \cite[Remark~1.4.2]{kedlaya-companions1} and references therein.
\end{proof}

\section{A local cut-by-curves calculation}

\begin{defn}
For $R$ a ring and $I$ a finitely generated ideal of $R$, we write $R^\wedge_I$ to denote the $I$-adic completion of $R$.
This is not to be confused with localization at $I$ followed by completion, which we write as 
$(R_I)^\wedge_{I}$.
\end{defn}

\begin{lemma} \label{L:local fully faithful}
Let $R$ be a $p$-adically complete ring such that $R/pR$ is reduced, equipped with a Frobenius lift $\sigma_R$.
Extend $\sigma_R$ to a Frobenius lift $\sigma$ on $R \llbracket t \rrbracket$ so that $\sigma(t) = t^p$.
Let $M$ be a finite projective $R\llbracket t \rrbracket[p^{-1}]$ equipped with an $R$-linear logarithmic connection with nilpotent residue
and a horizontal isomorphism with its $\sigma$-pullback.
Then the map
\[
H^1_F(M) \to H^1_F(M \otimes_{R \llbracket t \rrbracket} R((t))^\wedge_{(p)})
\]
is injective.
\end{lemma}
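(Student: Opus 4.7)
The plan is to adapt the coefficient-by-coefficient strategy of Lemma~\ref{L:H1 injective} to the local setting by exploiting an explicit decomposition of $R((t))^\wedge_{(p)}$. Suppose a cocycle $(\bv_1, \bv_2)$ representing a class in $H^1_F(M)$ has image zero in $H^1_F(M \otimes R((t))^\wedge_{(p)})$; then there exists $\bw' \in M \otimes R((t))^\wedge_{(p)}$ with $\bv_1 = (\sigma - 1)\bw'$ and $\bv_2 = \nabla \bw'$. The goal is to produce a replacement $\bw \in M$ satisfying the same coboundary relations.

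After fixing an $R\llbracket t \rrbracket[p^{-1}]$-basis of $M$, I will use the splitting of $R[p^{-1}]$-modules
\[
R((t))^\wedge_{(p)}[p^{-1}] = R\llbracket t\rrbracket[p^{-1}] \oplus t^{-1} R\langle t^{-1}\rangle[p^{-1}],
\]
which separates a bounded Laurent series into its non-negative and strictly-negative power-of-$t$ parts. Write $\bw' = \bw_+ + \bw_-$ accordingly, with $\bw_+ \in M$, and take $\bw := \bw_+$ as the candidate. Projecting both coboundary equations onto the negative-powers summand leaves two constraints on $\bw_-$: a Frobenius constraint $\bw_- = (F \bw_-)_-$ and a connection constraint $(\nabla \bw_-)_- = 0$. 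If $\bw_- = 0$, then $\bw_+$ witnesses the vanishing of the original class.

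Writing $\bw_- = \sum_{s \geq 1} \vec{b}_s t^{-s}$ with $|\vec{b}_s|_p$ bounded and $|\vec{b}_s|_p \to 0$ as $s \to \infty$, the connection constraint unpacks to the backward recursion
\[
(s - N_0) \vec{b}_s = \sum_{k \geq 1} N_k \vec{b}_{s+k}, \qquad s \geq 1,
\]
where $N = \sum_k N_k t^k$ is the log connection matrix. The nilpotence of the residue $N_0$ is essential here: it makes $(s - N_0)$ invertible for every $s \geq 1$, allowing us to express each $\vec{b}_s$ in terms of $\vec{b}_{s+1}, \vec{b}_{s+2}, \ldots$. After rescaling the basis so that $N \in \mathrm{Mat}(R\llbracket t\rrbracket)$, the operator $(s - N_0)^{-1}$ has $p$-adic norm $\leq 1$ whenever $s$ is coprime to $p$.

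The main obstacle is combining the connection recursion with the Frobenius constraint to force $\vec{b}_s = 0$ for every $s$. For $s$ coprime to $p$, a standard ultrametric maximum-principle argument using the contraction above together with the decay $|\vec{b}_s|_p \to 0$ yields vanishing directly. For $s$ divisible by high powers of $p$, $(s - N_0)^{-1}$ may have large $p$-adic norm, so the connection recursion alone is insufficient; instead the Frobenius constraint $\bw_- = (A \sigma \bw_-)_-$ expresses $\vec{b}_s$ in terms of $\sigma(\vec{b}_{s'})$ for $s' \geq s/p$, effectively trading powers of $p$ in the denominators for shifts of the index onto the non-$p$-divisible part of the sequence. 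The bookkeeping to interleave the two reductions and propagate the vanishing through the entire sequence is where the bulk of the work lies, but once accomplished, $\bw_- = 0$ and $\bw := \bw_+ \in M$ realizes the required coboundary, yielding the desired injectivity.
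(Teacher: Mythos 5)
Your approach is genuinely different from the paper's and, as written, contains a real gap. The paper's proof reduces to the case where $R/pR$ is a field, applies Corollary~\ref{C:log to overconvergent injective H1} when $R/pR$ is perfect, and otherwise embeds $R$ into $W(\ell)$ for $\ell$ the perfect closure of $R/pR$; the key observation is that the equation $t\frac{d}{dt}\bx = \bv$ with $\bv \in M$ and $\bx \in M \otimes W(\ell)\llbracket t \rrbracket$ forces $\bx$ to lie in $H^0(M)\otimes_R W(\ell) + M$, since the nonconstant $t$-coefficients of $\bx$ are determined recursively from those of $\bv$ (using nilpotence of the residue to invert $s - N_0$ for $s \geq 1$) and the constant part is controlled via $H^0$. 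The whole argument stays inside $\llbracket t \rrbracket$ and never engages with negative powers of $t$ directly; the $((t))^\wedge_{(p)}$ analysis is delegated to the already-established perfect-residue-field case.

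Your setup is sound up to a point: the splitting $R((t))^\wedge_{(p)}[p^{-1}] = R\llbracket t\rrbracket[p^{-1}] \oplus t^{-1}R\langle t^{-1}\rangle[p^{-1}]$, the projected constraints $\bw_- = (\phi\bw_-)_-$ and $(\nabla \bw_-)_- = 0$, and the recursion $(s-N_0)\vec{b}_s = \sum_{k\geq 1}N_k\vec{b}_{s+k}$ are all correct (after the standard reduction to $M$ free). But the final step, showing $\bw_- = 0$, is not carried out, and there are concrete obstacles to completing it as sketched. For $s$ coprime to $p$ the recursion only yields $|\vec{b}_s| \leq \sup_{m>s}|\vec{b}_m|$, which does not force $\vec{b}_s = 0$ because the supremum may be attained at a $p$-divisible index. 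For $s = p^a s_0$ with $p \nmid s_0$, the operator $(s-N_0)^{-1}$ amplifies norms by as much as $p^{an}$ with $n$ the nilpotency index of $N_0$. The Frobenius constraint $\vec{b}_m = \sum_{sp\geq m} A_{sp-m}\sigma_R(\vec{b}_s)$ involves the Frobenius matrix $A$, whose $p$-adic norm you have not bounded and which cannot in general be made integral simultaneously with the log connection matrix; moreover the index shift $m \mapsto s \geq \lceil m/p\rceil$ still lands on $p$-divisible indices, so one step of Frobenius does not reach the non-$p$-divisible part, and iterating compounds the uncontrolled amplifications from $A$ and from $(s-N_0)^{-1}$. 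The ``bookkeeping to interleave'' that you defer is exactly where the difficulty lies, and as written it is not established that it closes. The paper's route sidesteps all of this by never introducing negative powers of $t$.
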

\begin{proof}
By embedding $R/p$ into a product of fields, we immediately reduce to the case where $R/pR$ is a field.
If $R/pR$ is itself perfect, we may apply Corollary~\ref{C:log to overconvergent injective H1}.
In the general case, let $\ell$ be the perfect closure of $R/pR$; we then have a $\sigma$-equivariant embedding $R \to W(\ell)$. 
We will show that 
\[
H^1_F(M) \to H^1_F(M \otimes_{R \llbracket t \rrbracket} W(\ell)\llbracket t \rrbracket)
\]
is injective, which combined with the previous argument will yield the claim.

An element of $H^1_F(M)$ can be represented by a pair $(\bv, \bw) \in M \times M$ with $(\sigma-1)(\bv) = pt\frac{d}{dt}(\bw)$. If such an element maps to zero in
$ H^1_F(M \otimes_{R \llbracket t \rrbracket} W(\ell)\llbracket t \rrbracket)$, then there exists $\bx \in M \otimes_{R \llbracket t \rrbracket} W(\ell)\llbracket t \rrbracket$
such that $t \frac{d}{dt}(\bx) = \bv$ and $(\sigma-1)(\bx) = \bw$. However, by working over $R[p^{-1}]((t))$, we see that
the condition $t \frac{d}{dt}(\bx) = \bv$ implies that $\bx$ equals an element of $H^0(M \otimes_{R \llbracket t \rrbracket} W(\ell)\llbracket t \rrbracket) = 
H^0(M) \otimes_R W(\ell)$ (determined by the constant terms of $\bv$ and $\bx$) plus an element of $M$. We may thus reduce to the case where $\bv = 0$,
in which case we may deduce the claim from the fact that $H^0_F(M) \to H^0_F(M\otimes_{R \llbracket t \rrbracket} W(\ell)\llbracket t \rrbracket)$ is injective
(again by inspection of constant terms).
\end{proof}

\begin{lemma} \label{L:local intersections}
Let $S \to S'$ be a faithfully flat morphism of $p$-complete, $p$-torsion-free rings.
Let $M$ be a finite projective $S \llbracket t \rrbracket[p^{-1}]$-module equipped with a $S$-linear connection for the derivation $t \frac{d}{dt}$
with nilpotent residue and a compatible Frobenius structure.
Then the square
\[
\xymatrix{
H^1_F(M) \ar[r] \ar[d] & H^1_F(M \otimes_{S \llbracket t \rrbracket} S((t))^{\wedge}_{(p)}) \ar[d] \\
H^1_F(M \otimes_{S \llbracket t \rrbracket} S' \llbracket t \rrbracket) \ar[r] & H^1_F(M \otimes_{S \llbracket t \rrbracket} S'((t))^{\wedge}_{(p)})
}
\]
is cartesian.
\end{lemma}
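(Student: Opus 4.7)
The plan is to deduce cartesianness of the square of $H^1_F$'s from the cartesian square of the complexes computing them, handling the remaining obstruction by coefficient analysis exploiting the nilpotent residue hypothesis.

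First, I would verify that the square of rings
\[
\xymatrix{
S\llbracket t \rrbracket \ar[r] \ar[d] & S((t))^\wedge_{(p)} \ar[d] \\
S'\llbracket t \rrbracket \ar[r] & S'((t))^\wedge_{(p)}
}
\]
is cartesian: faithful flatness makes $S \subseteq S'$ a pure subring, so any element of $S'\llbracket t \rrbracket$ whose image in $S'((t))^\wedge_{(p)}$ lies in $S((t))^\wedge_{(p)}$ has all Laurent coefficients in $S$ and thus lies in $S\llbracket t \rrbracket$. Inverting $p$ is exact, and tensoring with the flat $S\llbracket t \rrbracket[p^{-1}]$-module $M$ preserves cartesianness. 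Applied degreewise to the three-term complex $C^\bullet$ with $C^0 = M$, $C^1 = M \oplus M$, $C^2 = M$ (differentials $d_0(\bx) = (t\frac{d}{dt}\bx, (\sigma-1)\bx)$ and $d_1(\bv,\bw) = (\sigma-1)\bv - p t\frac{d}{dt}\bw$), whose middle cohomology computes $H^1_F$, this yields a cartesian square of complexes.

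Second, for surjectivity, I would take compatible classes represented by cocycles $(\bv', \bw') \in M_{A'} \oplus M_{A'}$ and $(\bv_B, \bw_B) \in M_B \oplus M_B$ (writing $M_{A'} := M \otimes_{S\llbracket t \rrbracket} S'\llbracket t \rrbracket$, and similarly $M_B, M_{B'}$); their difference equals $d_0 \bx$ for some $\bx \in M_{B'}$. The crux is to show $\bx = \bx_1 + \bx_2$ with $\bx_1 \in M_{A'}$ and $\bx_2 \in M_B$. Using the nilpotent residue hypothesis to trivialize the connection in a basis of $M$ to the form $t\frac{d}{dt} + N$ with $N$ constant nilpotent, the $n$-th Laurent coefficient of $\bx$ satisfies $(n + N) x_n = d_n$; for $n < 0$ we have $d_n \in S[p^{-1}]$ (since $\bv'$ has no negative-index terms), and $(n + N)^{-1}$ is defined over $S[p^{-1}]$ because $n$ is a unit in the $\QQ_p$-algebra $S[p^{-1}]$ and $N$ is nilpotent, forcing $x_n \in S[p^{-1}]$ for $n < 0$. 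Adjusting the cocycles by $d_0 \bx_1$ and $-d_0 \bx_2$ makes them agree in $M_{B'}$, and the cartesian square of modules then produces the desired cocycle in $M \oplus M$.

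Third, for injectivity, which I expect to be the main obstacle: if $d_0 \bx_1 = d_0 \bx_2 = (\bv, \bw)$ for $\bx_1 \in M_{A'}$ and $\bx_2 \in M_B$, then $\bx_1 - \bx_2 \in H^0_F(M_{B'})$. An analogous coefficient analysis---equivalently, the general principle that horizontal sections with nilpotent residue extend from the punctured disk to the full disk---identifies $H^0_F(M_{A'}) \cong H^0_F(M_{B'})$, so $\bx_1 - \bx_2$ lifts into $M_{A'}$; replacing $\bx_1$ by $\bx_1 - (\bx_1 - \bx_2)$ then gives an element of $M_{A'}$ equal to $\bx_2$ in $M_{B'}$, and the cartesian square produces a common lift $\bx \in M$ with $d_0 \bx = (\bv, \bw)$. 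The principal technical hurdle is executing these Laurent-coefficient arguments rigorously for general finite projective $M$, resting on the reduction via nilpotent residue to a constant connection matrix and on the invertibility of nonzero integers in the $\QQ_p$-algebra $S[p^{-1}]$.
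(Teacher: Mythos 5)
Your proposed proof takes a genuinely different route from the paper's. The paper proves the lemma by faithfully flat descent: it introduces the \v{C}ech--Alexander cosimplicial ring $S \to S' \to S' \widehat{\otimes} S' \to \cdots$, deduces from its exactness the exactness of the corresponding complexes of differentials \eqref{eq:exactness to differentials} and $H^0$'s \eqref{eq:exactness to differentials2}, and then does a diagram chase in a $3 \times 2$ grid whose rows are injective by Lemma~\ref{L:local fully faithful}. You instead start from the observation that the square of rings $S\llbracket t\rrbracket \to S((t))^\wedge_{(p)}$, $S'\llbracket t\rrbracket \to S'((t))^\wedge_{(p)}$ is cartesian (which in fact only needs $S \to S'$ injective, not faithfully flat), base-change this to $M$, and then argue surjectivity and injectivity of the induced map on $H^1_F$ by splitting a Laurent series $\bx \in M_{B'}$ into a negative-degree piece in $M_B$ and a nonnegative-degree piece in $M_{A'}$.

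The gap is in the normalization step that your coefficient analysis relies on. You ``trivialize the connection in a basis of $M$ to the form $t\frac{d}{dt}+N$ with $N$ constant nilpotent.'' The gauge transformation $g = \sum_{n\ge 0} g_n t^n$ achieving this is built by inverting $n + \mathrm{ad}_{N(0)}$ at each step, so $v_p(g_n)$ can decrease like $-m\,v_p(n)$ ($m$ the nilpotency order). Hence $g$ lies in $S[p^{-1}]\llbracket t\rrbracket$ but not in $S\llbracket t\rrbracket[p^{-1}]$, and $S[p^{-1}]\llbracket t\rrbracket$ does \emph{not} map to $S((t))^\wedge_{(p)}[p^{-1}]$ (whose elements have coefficients with uniformly bounded denominator). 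So the change of basis is not an automorphism of the given module, and it does not act on $M_B$, $M_{B'}$. Without it, the coefficient recursion becomes $(n+N_0)x_n + \sum_{j\ge 1} N_j x_{n-j} = d_n$, which mixes Laurent coefficients with no most-negative term to anchor the induction, and the splitting $\bx = \bx_1 + \bx_2$ is not obtained. The Frobenius structure does constrain the connection (this is the substance of unipotence results for log-$F$-isocrystals and of Lemma~\ref{L:local fully faithful}, whose proof reduces to a perfect residue field), but invoking it here would require real work, not just the observation that integers are units in a $\QQ_p$-algebra. Two smaller issues of the same flavor: you assume $M$ is free so that coefficientwise arguments apply (the lemma only assumes finite projective), and the identification $H^0_F(M_{A'}) \cong H^0_F(M_{B'})$ used for injectivity is a version of Lemma~\ref{L:fully faithful log} (over $S'$) that you cite as a ``general principle'' but would have to prove --- it faces the same normalization issue. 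The paper's descent formalism is designed precisely to confine the delicate coefficient analysis to Lemma~\ref{L:local fully faithful}, where it is carried out under controlled hypotheses.
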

\begin{proof}
By faithfully flat descent, the \v{C}ech--Alexander sequence
\[
0 \to S \to S^1 := S' \to S^{2} := S' \widehat{\otimes} S' \to \cdots
\]
is exact. From this it follows that if we form the complexes $\Omega^\bullet_{S^*}$ and $\Omega^{\bullet}_{S^*,1}$ which compute $(F, \nabla)$-cohomology
of $M \otimes_{S \llbracket t \rrbracket} S^* \llbracket t \rrbracket$
and $M \otimes_{S \llbracket t \rrbracket} S^*((t))^\wedge_{(p)}$, respectively, then for each $i$ we have an exact sequence
\begin{equation} \label{eq:exactness to differentials}
0 \to \Omega^i_{S} \to \Omega^i_{S^1} \to \Omega^i_{S^2} \to \cdots
\end{equation}
Also, using the fact that $H^0(M \otimes_{S \llbracket t \rrbracket} S^*((t))^\wedge_{(p)})$ can be identified with the kernel of the residue map,
we deduce that
\begin{equation} \label{eq:exactness to differentials2}
0 \to H^0(\Omega^\bullet_S) \to H^0(\Omega^\bullet_{S^1}) \to H^0(\Omega^\bullet_{S^2}) \to \cdots 
\end{equation}
is exact.

Let us extend the original diagram by one row. In the new notation, it becomes
\[
\xymatrix{
H^1(\Omega^\bullet_S) \ar[r] \ar[d] & H^1(\Omega^\bullet_{S,1}) \ar[d] \\
H^1(\Omega^\bullet_{S^1}) \ar[r] \ar[d] & H^1(\Omega^\bullet_{S^1,1}) \ar[d] \\
H^1(\Omega^\bullet_{S^2}) \ar[r]  & H^1(\Omega^\bullet_{S^2,1})
}
\]
The rows of this diagram are all injective by Lemma~\ref{L:local fully faithful}, while each column composes to zero.
Consequently, if we have $\bv \in H^1(\Omega^i_{S,1})$, $\bw \in H^1(\Omega^\bullet_{S^1})$ with the same image in
$H^1(\Omega^\bullet_{S^1,1})$, then the image of $\bw$ in $H^1(\Omega^\bullet_{S^2})$ must vanish. It will thus suffice to check that
\[
H^1(\Omega^\bullet_S) \to H^1(\Omega^\bullet_{S^1}) \to H^1(\Omega^\bullet_{S^2})
\]
is exact. Using the exactness of \eqref{eq:exactness to differentials}, this reduces to the exactness of \eqref{eq:exactness to differentials2}.
\end{proof}

\begin{remark} \label{R:local intersections}
The use case we have in mind for Lemma~\ref{L:local intersections} is the following.
Let $P$ be a smooth formal scheme over $W(k)$ and put $S := \calO(P)$.
For each $x \in P_k^\circ$, let $R_x$ be the completion of $R$ along $x$.
Then $R \to R_x$ is flat for each $x$;
since any product of flat modules over a noetherian ring is flat \cite[Theorem~2.1]{chase},
$R \to \prod_x R_x$ is faithfully flat.
\end{remark}

\begin{lemma} \label{L:fully faithful log}
Let $P$ be a smooth formal scheme over $W(k)$ and put $S := \calO(P)$.
Let $M$ be a finite projective $S \llbracket t \rrbracket[p^{-1}]$-module equipped with an $S$-linear connection for the derivation $t \frac{d}{dt}$ with nilpotent residue and a compatible Frobenius structure. Then 
\[
H^0_F(M) = H^0_F(M \otimes_{S \llbracket t \rrbracket} S ((t))^{\wedge}_{(p)}).
\]
\end{lemma}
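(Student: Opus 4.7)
The injection $H^0_F(M) \hookrightarrow H^0_F(M \otimes_{S \llbracket t \rrbracket} S((t))^{\wedge}_{(p)})$ is immediate from the flatness of the ring map $S \llbracket t \rrbracket[p^{-1}] \to S((t))^{\wedge}_{(p)}$, so the substantive content is surjectivity. My plan is to mirror the strategy of the proof of Lemma~\ref{L:local fully faithful}, but for $H^0_F$ in place of $H^1_F$.

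First, I would reduce to the case where $S$ is a complete local ring. Using the faithfully flat embedding $S \hookrightarrow \prod_{x \in P_k^\circ} R_x$ from Remark~\ref{R:local intersections}, together with a \v{C}ech--Alexander / cartesian-square argument analogous to (but simpler than) the one in Lemma~\ref{L:local intersections}---simpler because $H^0$ interacts more cleanly with flat base change than $H^1$---reduce to proving the assertion at each complete local $R_x$. Then, following the first paragraph of the proof of Lemma~\ref{L:local fully faithful}, reduce modulo $p$ and pass to residue fields at minimal primes to arrive at the case where $S/pS$ is a field. When this field is perfect, so that $S \cong W(\ell)$ for a perfect $\ell$, identify $M$ with a log-$F$-isocrystal on the formal disk over $\ell$ and deduce the conclusion from (a local form of) Lemma~\ref{L:fully faithful3}, after realizing the situation as the formal completion along $\{0\} \subset \AAA^1_\ell$ endowed with logarithmic structure at the origin. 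When the field is imperfect, embed $S$ into $W(\ell)$ for $\ell$ the perfect closure; the conclusion for $S$ then follows from that for $W(\ell)$ together with the intersection identity $M \otimes_{S\llbracket t \rrbracket} S((t))^{\wedge}_{(p)} \cap M \otimes_{S \llbracket t \rrbracket} W(\ell)\llbracket t \rrbracket = M$ inside $M \otimes_{S \llbracket t \rrbracket} W(\ell)((t))^{\wedge}_{(p)}$, which is an inspection of Laurent coefficients analogous to the ``constant term'' trick at the end of the proof of Lemma~\ref{L:local fully faithful}.

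The step I expect to be the main obstacle is the perfect-field reduction: Lemma~\ref{L:fully faithful3} is formulated in the global geometric setting of smooth $k$-schemes, whereas we must apply its essence to the formal local setup $\Spf W(\ell)\llbracket t \rrbracket$ with log structure at $t = 0$. Either this is handled by a spreading-out argument that globalizes the local situation, or one proves the perfect case directly by a Laurent-series calculation: expand a horizontal Frobenius-fixed $\bx = \sum_n \bx_n t^n$ in $M \otimes W(\ell)((t))^{\wedge}_{(p)}$, use the nilpotent residue to derive a recursion $(nI + N_0)\bx_n = -\sum_{k \geq 1} N_k \bx_{n-k}$ on its Laurent coefficients, and use the Frobenius identity $\bx = A \sigma(\bx)$ (together with $\sigma(t) = t^p$, which makes $\sigma$ strongly $t$-adically contracting on negative powers) to force $\bx_n = 0$ for all $n < 0$. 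This last computation would be analogous in spirit to the Katz slope estimates invoked in the proof of Lemma~\ref{L:H1 injective}.
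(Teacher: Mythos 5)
Your route differs substantially from the paper's, and the step you flag as the main obstacle is a genuine gap. The paper's proof requires none of the descent machinery you propose: it passes in one step from $S$ to the single complete discrete valuation ring $\frako_L := (S_{(p)})^\wedge_{(p)}$, the completion of $S$ at the height-one prime $(p)$, whose residue field is $\Frac(S/pS)$. The elementary intersection identity
\[
S((t))^\wedge_{(p)}[p^{-1}] \cap \frako_L\llbracket t\rrbracket[p^{-1}] = S\llbracket t\rrbracket[p^{-1}]
\]
inside $\frako_L((t))^\wedge_{(p)}[p^{-1}]$ (and its consequence for the finite projective module $M$) then reduces the entire lemma to the equality $H^0_F(M\otimes\frako_L\llbracket t\rrbracket) = H^0_F(M\otimes\frako_L((t))^\wedge_{(p)})$ over a complete DVR, which is \cite[Theorem~20.3.5]{kedlaya-book}. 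That theorem holds for a DVR with arbitrary (not necessarily perfect) residue field, so no \v{C}ech--Alexander descent along $S\hookrightarrow\prod_x R_x$, no reduction to residue fields at minimal primes of $R/pR$, and no perfect/imperfect dichotomy are needed. Put differently: where the paper uses the intersection trick once, at the very top, to drop to a DVR, you defer it to the bottom of a three-layer reduction and arrive at a configuration that you then do not know how to close.

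Concretely, neither of your proposed ways to handle the perfect-residue-field case closes the gap. Lemma~\ref{L:fully faithful3} is stated for a global smooth pair $(\overline{X},Z)$ over the finite field $k$; a finite projective module with log-connection and Frobenius over $W(\ell)\llbracket t\rrbracket[p^{-1}]$ ($\ell$ a large perfect field) need not arise as the formal completion of such an object, and you offer no mechanism for such a spreading-out. The direct Laurent-series computation also does not work as sketched: an element $\bx=\sum_n\bx_n t^n\in M\otimes W(\ell)((t))^\wedge_{(p)}$ can have $\bx_n\neq 0$ for infinitely many $n<0$ (they only tend to $0$ $p$-adically), so there is no lowest-order term on which a ``$\sigma(t)=t^p$ is contracting'' argument can bite; and the operator $n+N_0$ in your recursion, though invertible over $W(\ell)[p^{-1}]$, is not invertible over $W(\ell)$ when $p\mid n$, so the recursion does not give the integral $p$-adic control needed to conclude $\bx_n=0$. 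What you would have to prove at this step is essentially \cite[Theorem~20.3.5]{kedlaya-book} itself, which is a genuine (if standard) theorem in the theory of $p$-adic differential modules, not something recoverable by the two-line estimate you sketch.
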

\begin{proof}
Put $\frako_L = (S_{(p)})^\wedge_{(p)}$; this is a complete discrete valuation ring with fraction field $L = \frako_L[p^{-1}]$. It is evident that
\[
S((t))^\wedge_{(p)}[p^{-1}] \cap \frako_L \llbracket t \rrbracket[p^{-1}] = S \llbracket t \rrbracket[p^{-1}]
\]
(taking the intersection in $\frako_L ((t))^{\wedge}_{(p)}[p^{-1}])$; this in turn implies that
\[
(M \otimes_{S \llbracket t \rrbracket} S((t))^\wedge_{(p)}) \cap (M \otimes_{S \llbracket t \rrbracket} \frako_L \llbracket t \rrbracket) = M
\]
(taking the intersection in $M \otimes_{S \llbracket t \rrbracket} \frako_L ((t))^{\wedge}_{(p)}$).
It thus suffices to check that
\[
H^0_F(M \otimes_{S \llbracket t \rrbracket} \frako_L \llbracket t \rrbracket) = H^0_F(M \otimes_{S \llbracket t \rrbracket} \frako_L ((t))^{\wedge}_{(p)});
\]
we may deduce this from \cite[Theorem~20.3.5]{kedlaya-book}.
\end{proof}

\begin{lemma} \label{L:add sideways connection}
Let $P$ be a smooth formal scheme over $W(k)$ and put $S := \calO(P)$.
Let $M$ be a finite projective $S \llbracket t \rrbracket[p^{-1}]$-module equipped with an $S$-linear connection for the derivation $t \frac{d}{dt}$ with nilpotent residue and a compatible Frobenius structure.
Suppose that the action of $t \frac{d}{dt}$ on $M \otimes_{S \llbracket t \rrbracket} S ((t))^{\wedge}_{(p)}$ extends to a $W(k)$-linear connection compatible with the Frobenius structure. Then this extension induces a $W(k)$-linear logarithmic (for the log structure defined by $t$) connection on $M$.
\end{lemma}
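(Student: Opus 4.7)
The plan is to work in local coordinates on $P$ and use Frobenius compatibility to force the principal part of the sideways connection to vanish. After possibly shrinking $P$, fix local coordinates $s_1, \dots, s_n$ on $P$ (so that $\partial_1, \dots, \partial_n$ form a basis of the $W(k)$-linear derivations of $S$) and a basis $\be_1, \dots, \be_r$ of $M$, and write the Frobenius structure as a matrix $F \in \GL_r(S\llbracket t \rrbracket[p^{-1}])$. Let $N_j \in M_r(S((t))^\wedge_{(p)}[p^{-1}])$ denote the matrix describing the action of $\partial_j$ on this basis via the extended connection. It suffices to show that each $N_j$ has entries in $S\llbracket t \rrbracket[p^{-1}]$.

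Because $\sigma$ is a Frobenius lift on a smooth formal scheme, the matrix $A \in M_n(S)$ with $A_{ij} := p^{-1}\partial_j(\sigma(s_i))$ is well-defined and invertible, with $|A|_p \leq 1$. A short computation with Frobenius compatibility applied to basis vectors gives the matrix identity
\[
(\partial_j F) + N_j F = p F \sum_i A_{ij} \sigma(N_i).
\]
Decomposing $N_j = N_j^+ + N_j^-$ into its regular and $t$-polar parts, and noting that $F, F^{-1}, \partial_j F$, and the entries of $A$ are all regular in $t$, projection onto principal parts gives the recursion
\[
N_j^- = p \sum_i F A_{ij} \sigma(N_i^-) F^{-1}.
\]
Iterating this $n$ times yields
\[
N_j^- = p^n L_n \Big( \sum_i A^{(n)}_{ij} \sigma^n(N_i^-) \Big) L_n^{-1},
\]
where $L_n := F \sigma(F) \cdots \sigma^{n-1}(F)$ and $A^{(n)} := A \sigma(A) \cdots \sigma^{n-1}(A) \in M_n(S)$.

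The main obstacle is to extract a vanishing from this recursion: for fixed $m > 0$, the coefficient of $t^{-m}$ in $N_j^-$ is expressed as $p^n$ times a $p$-adically convergent sum of products of coefficients of $L_n$, $L_n^{-1}$, $A^{(n)}$, and $\sigma^n(N_i^-)$, and I need the $p^n$ factor to overcome the growth of $L_n$ and $L_n^{-1}$. To arrange this, I would first shrink $P$ so that $M$ has constant Newton polygon and apply the slope filtration of Lemma~\ref{L:slope filtration}; since $\nabla_s$ commutes with $\nabla_t$ and $\sigma$, it preserves this filtration, so it suffices to handle each isoclinic graded piece separately. A Tate twist then reduces to the unit-root case, in which $F$ preserves an integral lattice and hence $|L_n|_p, |L_n^{-1}|_p \leq 1$ uniformly in $n$. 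Combined with $|A^{(n)}|_p \leq 1$ and the fact that $\sigma$ is $p$-adically isometric on $S$ (so $|\sigma^n(N_i^-)|_p = |N_i^-|_p$), every coefficient of $t^{-m}$ in $N_j^-$ has $p$-adic valuation at least $n - C$ for some constant $C$ depending only on $N^-$. Letting $n \to \infty$ forces $N_j^- = 0$, so the extended connection restricts to $M$, giving the desired logarithmic connection.
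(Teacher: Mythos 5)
Your proof takes a genuinely different route from the paper's. The paper's argument is short and global: it regards the sideways connection as data compatible with the Frobenius and $t\tfrac{d}{dt}$ structures, reduces via Lemma~\ref{L:fully faithful log} (i.e.\ $H^0_F(M) = H^0_F(M\otimes_{S\llbracket t\rrbracket}S((t))^\wedge_{(p)})$, which ultimately rests on the one-variable statement \cite[Theorem~20.3.5]{kedlaya-book}) to conclude that the sideways connection already lives over $S\llbracket t\rrbracket$. Crucially, that route makes no hypothesis on slopes. Your argument instead works coordinatewise, derives the Frobenius identity $\partial_j F + N_j F = p F\sum_i A_{ij}\sigma(N_i)$, and tries to kill the $t$-polar part of $N_j$ by iterating and using the prefactor $p$ as a contraction. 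This is an appealing and more explicit strategy, but as written it has a real gap.

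The gap is in the reduction to the unit-root case. The $p^n$ gain must beat the growth of $L_n=F\sigma(F)\cdots\sigma^{n-1}(F)$ and $L_n^{-1}$, and this only happens when $F$ and $F^{-1}$ are integral, i.e.\ after reducing to slope zero. You propose to achieve this by ``shrinking $P$ so that $M$ has constant Newton polygon'' and then invoking Lemma~\ref{L:slope filtration}. But $M$ is a module over $S\llbracket t\rrbracket[p^{-1}]$, not over $S[p^{-1}]$: its Newton polygon is a function on (points of) $\Spec (S/p)\llbracket t\rrbracket$, and it may jump as $t\to 0$. Shrinking $P$ only shrinks in the $S$-directions and cannot remove a jump along the divisor $t=0$, which is precisely the locus we care about. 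Even in cases where the polygon is constant, you would further need the slope filtration to exist in the category of log-$F$-isocrystals with nilpotent residues (i.e.\ the filtration steps must themselves extend with nilpotent residue across $t=0$), and you would need the sideways derivations to preserve that filtration; neither point is addressed. Without the unit-root reduction, the estimate $|N_j^-|_p\le |p|^n|L_n|_p|L_n^{-1}|_p|A|_p^n|N^-|_p$ does not tend to zero once the Newton polygon has width $\ge 1$, so the contraction fails.

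Two smaller points. First, the matrix $A=(p^{-1}\partial_j(\sigma(s_i)))$ is not invertible in general: modulo $p$ one has $A\equiv\mathrm{diag}(s_i^{p-1})+(\partial_j g_i)$ where $\sigma(s_i)=s_i^p+pg_i$, and this can vanish (the differential $d\sigma$ of a Frobenius lift is not invertible, even after inverting $p$). Fortunately your argument only uses $|A|_p\le 1$, not invertibility, so this is cosmetic. Second, the recursion for polar parts should carry a truncation: from $N_j = -(\partial_j F)F^{-1} + p\sum_i A_{ij}F\sigma(N_i)F^{-1}$ one gets $N_j^- = p\sum_i A_{ij}\bigl(F\sigma(N_i^-)F^{-1}\bigr)^-$, and the outer polar-part projection does not commute with multiplication by $F$, $F^{-1}$. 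This does not harm the norm bound (projection is norm-nonincreasing), but it does invalidate the literal closed-form iterate $N_j^-=p^nL_n(\cdots)L_n^{-1}$ you wrote down; you should phrase the estimate as an inequality rather than an identity.

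In short: a nice and concrete alternative strategy, but the unit-root reduction, which carries all the weight, needs either to be justified in the log-boundary setting or replaced by something (like the paper's appeal to Lemma~\ref{L:fully faithful log}) that does not require slope control.
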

\begin{proof}
Put $M_1 := M \otimes_{S \llbracket t \rrbracket} S ((t))^{\wedge}_{(p)}$.
The $W(k)$-linear connection on $M_1$ induces a morphism $M_1 \to M_1 \otimes_S \Omega_{S/W(k)}$ which is compatible with the actions of $t \frac{d}{dt}$ and the Frobenius structures on both sides. We may thus identify it with an element of $H^0_F(M_1^\dual \otimes M_1 \otimes_S \Omega_{S/W(k)})$;
by Lemma~\ref{L:fully faithful log} the latter equals $H^0_F(M^\dual \otimes M \otimes_S \Omega_{S/W(k)})$.
Going in reverse, this element gives a morphism $M \to M\otimes_S \Omega_{S/W(k)}$ which is compatible with the actions of $t \frac{d}{dt}$ and the Frobenius structures on both sides,
which in turn yields the desired connection on $M$.
\end{proof}

\begin{lemma} \label{L:overconvergent extension from curves}
Suppose that $X$ admits a good compactification $\overline{X}$ with boundary $Z$.
Let $C_1,C_2,\dots$ be an exhaustive sequence of smooth curves in $\overline{X}$.
Choose $\calE_1^\dagger, \calE_2^\dagger \in \FIsoc^\dagger(X)$ and consider an exact sequence
\[
0 \to \calE_1 \to \calE \to \calE_2 \to 0
\]
in $\FIsoc(X)$ such that $\calE|_{C_i \times_{\overline{X}} X} \in \FIsoc(C_i, C_i \times_{\overline{X}} Z)$ for each $i$.
Then $\calE \in \FIsoc(\overline{X},Z)$; in particular, $\calE \in \FIsoc^\dagger(X)$.
\end{lemma}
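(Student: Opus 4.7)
The plan is to work Zariski-locally on $\overline{X}$ at a point of $Z$, reducing to a setting where the local lemmas of the previous section apply, and then to construct the logarithmic extension of $\calE$ first in the $t$-direction (using the cut-by-curves data) and afterwards in the sideways directions (via Lemma~\ref{L:add sideways connection}). Since membership in $\FIsoc(\overline{X}, Z)$ is a Zariski-local property on $\overline{X}$, I reduce to a neighborhood of a point of $Z$. Choosing \'etale local coordinates $t_1, \ldots, t_m$ with $Z = V(t_1 \cdots t_m)$, an induction on $m$ (since logarithmic structures along a strict normal crossings divisor decompose across components) reduces to the case $Z = V(t)$ smooth. Lift this neighborhood to a smooth affine formal scheme $\overline{P} = \Spf T$ over $W(k)$ with a Frobenius lift $\sigma$ satisfying $\sigma(t) = t^p$; then the $(p,t)$-adic completion of $T$ has the form $S \llbracket t \rrbracket$ for a smooth $p$-adically complete $W(k)$-algebra $S$, putting us in the setting of Lemmas~\ref{L:local fully faithful}--\ref{L:add sideways connection}.

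By Lemma~\ref{L:add sideways connection}, it then suffices to construct a finite projective $S\llbracket t \rrbracket[p^{-1}]$-module $M$ equipped with an $S$-linear logarithmic connection along $t \frac{d}{dt}$ (with nilpotent residue) and a compatible Frobenius structure, such that $M \otimes_{S\llbracket t \rrbracket} S((t))^{\wedge}_{(p)}[p^{-1}]$ recovers the natural formal-at-$V(t)$ localization of $\calE$; the required full $W(k)$-linear connection on this localization is inherited from $\calE$'s own structure as a convergent $F$-isocrystal on $X$. To build $M$, I use faithfully flat descent along the map $S \to \prod_x R_x$ of Remark~\ref{R:local intersections}, with the product running over closed points $x$ of $V(t)$ in the local chart. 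For each such $x$, the exhaustive property (via Lemma~\ref{L:refine exhaustive sequence}) supplies curves $C_i$ from the sequence passing through $x$ transversally to $V(t)$; on each such curve the hypothesis yields a canonical log extension of $\calE|_{C_i \cap X}$. These curve-wise data produce candidate local modules $M_x$ over $R_x \llbracket t \rrbracket[p^{-1}]$, and the cartesian square of Lemma~\ref{L:local intersections} assembles them into the desired $M$ over $S \llbracket t \rrbracket[p^{-1}]$.

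The main obstacle is coherence of the curve-wise data. First, different curves passing through the same $x$ must produce identical local log extensions; this is forced by the uniqueness of log extensions (Lemma~\ref{L:fully faithful3}) together with the injectivity of Lemma~\ref{L:local fully faithful} applied to internal Hom modules, which detects vanishing of any obstruction class in $H^1_F$. Second, the candidate modules $M_x$ must satisfy the compatibility needed for descent through the cartesian square of Lemma~\ref{L:local intersections}, which again reduces to the same injectivity principle. Nilpotence of the residue on $M$ propagates from its nilpotence on each curve by the same mechanism. Once $M$ is constructed locally, Lemma~\ref{L:add sideways connection} promotes it to a module with a full logarithmic connection, and gluing over a finite cover of $\overline{X}$ yields the desired global object of $\FIsoc(\overline{X}, Z)$ extending $\calE$; the analogous claims for $\calE_1$ and $\calE_2$ then follow automatically from the subquotient closure of Lemma~\ref{L:log extensions}.
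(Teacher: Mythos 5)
Your proposal matches the paper's overall strategy in the middle of the argument — Zariski localization to the smooth locus of $Z$, picking a transverse curve through each closed point of the boundary, extending over each $R_x\llbracket t\rrbracket$ via the curve-wise log-extension, assembling via Lemma~\ref{L:local intersections} together with Remark~\ref{R:local intersections}, and then invoking Lemma~\ref{L:add sideways connection} — but it contains a genuine gap at the step where you reduce to the case $Z = V(t)$ smooth, and it omits the final reassembly step that the paper needs.

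The claimed reduction, ``an induction on $m$ (since logarithmic structures along a strict normal crossings divisor decompose across components) reduces to the case $Z = V(t)$ smooth,'' is unjustified, and I do not see how to make it work. When components of $Z$ meet, the log structure does not decompose in a way that lets you treat the components independently: the inductive hypothesis would leave you trying to extend a connection that already has log poles along $Z_2 \cup \cdots \cup Z_m$ across the remaining component $Z_1$, which is a different situation from the clean $Z = V(t)$ setup you need, and in any case the locus where two or more components meet (codimension $\geq 2$) is precisely where the formal-local construction you build gives no information, since no curve meets it transversally. The paper sidesteps this by restricting attention to $\overline{X} \setminus W$, where $W$ is the nonsmooth locus of $Z$, so that the boundary divisor is locally $V(t)$ smooth, performing your local construction there, and then invoking two further results from the literature: first \cite[Theorem~5.1]{kedlaya-isocrystals} to pass from a log-extension on $\overline{X} \setminus W$ to overconvergence on $X$, and then \cite[Theorem~7.4]{kedlaya-isocrystals} (a purity-type statement for logarithmic extensions across a closed subscheme of codimension $\geq 2$) to fill in $W$ and produce an object of $\FIsoc(\overline{X},Z)$. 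Neither of these two final steps appears in your argument, and without them (or some replacement for the purity input) the argument terminates with a log-extension only over $\overline{X} \setminus W$, which is strictly weaker than the stated conclusion. Finally, two smaller points: the paper first uses internal Homs to reduce to $\calE_2^\dagger = \calO^\dagger$, which is what puts the problem into the $H^1_F$ framework that Lemma~\ref{L:local intersections} speaks to, and your closing appeal to Lemma~\ref{L:log extensions} ``for the analogous claims for $\calE_1$ and $\calE_2$'' is off target, since the lemma asserts nothing about $\calE_1$ and $\calE_2$ (they are hypothesized to be overconvergent, not claimed to be docile).
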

\begin{proof}
Using internal Homs, we may reduce to the case $\calE^\dagger_2 = \calO^\dagger$. That is, we are given a class $\bv \in H^1_F(X, \calE_{1})$ whose image in $H^1_F(C_i, \calE_{1})$ belongs to $H^1_F(C_i, \calE_1^\dagger)$ for every $i$, and 
we wish to conclude that $\bv \in H^1_F(X, \calE_1^\dagger)$. 
(Here we are using Corollary~\ref{C:overconvergent to convergent injective H1}
to identify $H^1_F(X, \calE_1^\dagger)$ with a $\QQ_p$-subspace of $H^1_F(X, \calE_1)$.)

Our first goal is to show that $\calE$ extends to $\FIsoc(\overline{X} \setminus W, Z \setminus W)$ where $W$ is the nonsmooth locus of $Z$. For this we may work locally on $\overline{X} \setminus W$,
so we may restrict to an open subset $U$ of $\overline{X} \setminus W$ meeting only one component of $Z$.
We may also assume that there exists $t \in \calO(U)$ cuts out $Z \cap U$ (schematically) and that there exists an isomorphism $\calO(U)^\wedge_{(t)} \cong \calO(U \cap Z) \llbracket t \rrbracket$. 

We next show that the action of $t \frac{d}{dt}$ can be extended across $U \cap Z$, which by Lemma~\ref{L:add sideways connection}
will imply that the rest of the connection also extends.
Set $S := \calO(U \cap Z)$. For each $x \in (U \cap Z)^\circ$ corresponding to a maximal ideal $\frakm$ of $S$, choose an index $i$ such that the curve $C_i$ passes through $x$ and meets $Z$ transversely (by the definition of an exhaustive sequence there are in fact infinitely many such indices).
By hypothesis, the restriction of $\calE$ to $\FIsoc^\dagger(C_i \times_{\overline{X}} X)$ extends to an object of $\FIsoc(C_i, C_i \times_{\overline{X}} Z)$. The latter in turn extends uniquely over the formal completion of $\overline{X}$ along $C_i$;
in particular, we obtain an extension of the desired form after we replace $S$ with $S^{\wedge}_{\frakm}$,
or for that matter with the product $S' := \prod_\frakm S^{\wedge}_{\frakm}$ running over maximal ideals of $S$.
As per Remark~\ref{R:local intersections}, $S'$ is faithfully flat over $S$; we may thus apply Lemma~\ref{L:local intersections} to achieve the desired effect.

By \cite[Theorem~5.1]{kedlaya-isocrystals}, any object of $\FIsoc(\overline{X} \setminus W, Z \setminus W)$ belongs to $\FIsoc^\dagger(X)$.
We can then apply \cite[Theorem~7.4]{kedlaya-isocrystals} to promote $\calE$ to $\FIsoc(\overline{X}, Z)$, as desired.
\end{proof}

\section{Separating algebraic eigenvalues}

We now make a discussion parallel to \cite[\S 8.4]{kedlaya-companions2}.

\begin{defn}
Define the formal base extensions $\FIsoc(X) \otimes \overline{\QQ}_p, \FIsoc^\dagger(X) \otimes \overline{\QQ}_p$ as in \cite[\S 9]{kedlaya-isocrystals}.
For $\calE \in \FIsoc(X) \otimes \overline{\QQ}_p$ and
$x \in X^\circ$, let $c(\calE, x)$ be the number of eigenvalues of Frobenius on $\calE_x$ which belong to $\overline{\QQ}$. 
We say that $\calE$ is \emph{algebraic} if $c(\calE,x) = \rank(\calE_x)$ for all $x \in X^\circ$.
\end{defn}

\begin{lemma} \label{L:split algebraic eigenvalues}
Let $C_1, C_2, \dots$ be an exhaustive sequence of smooth curves on $X$.
Suppose that for some $\calE \in \FIsoc(X) \otimes \overline{\QQ}_p$, for every $j$, $\calE|_{C_j} \in \FIsoc(C_j) \otimes \overline{\QQ}_p$ is the restriction of an object $\calE^\dagger|_{C_j} \in \FIsoc^\dagger(C_j) \otimes \overline{\QQ}_p$. 
\begin{enumerate}
\item[(a)]
There exists a unique direct sum decomposition
$\calE = \calE_1 \oplus \calE_2$ such that $c(\calE,x) = c_1(\calE_2,x) = \rank(\calE_{2,x})$ for all $x \in X^\circ$.
\item[(b)]
For $i=1,2$, for every $j$, $\calE_i|_{C_j} \in \FIsoc(C_j) \otimes \overline{\QQ}_p$ is the restriction of an object $\calE_i^\dagger|_{C_j} \in \FIsoc^\dagger(C_j) \otimes \overline{\QQ}_p$. 
\end{enumerate}
\end{lemma}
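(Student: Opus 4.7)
I would produce the decomposition curve-by-curve and then glue globally. First, for each $j$, invoke the curve analog of this lemma (the content of the discussion \cite[\S 8.4]{kedlaya-companions2} that we are paralleling) to split the overconvergent object as
\[
\calE^\dagger|_{C_j} = \calF_{1,j}^\dagger \oplus \calF_{2,j}^\dagger \quad \text{in } \FIsoc^\dagger(C_j) \otimes \overline{\QQ}_p,
\]
where $\calF_{2,j}^\dagger$ collects precisely the algebraic Frobenius eigenvalues at every closed point of $C_j$. Restriction to $\FIsoc(C_j) \otimes \overline{\QQ}_p$ yields $\calE|_{C_j} = \calF_{1,j} \oplus \calF_{2,j}$, corresponding to an idempotent $e_j \in \End(\calE|_{C_j})$ whose value at any $x \in C_j^\circ$ is, intrinsically, the projection onto the generalized eigenspace $V_{\mathrm{alg},x} \subset \calE_x$ for algebraic Frobenius eigenvalues.

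Second, I would apply Lemma~\ref{L:H0 on curves} to the internal endomorphism object $\calE^\dual \otimes \calE \in \FIsoc(X) \otimes \overline{\QQ}_p$ (the extension of that lemma to $\overline{\QQ}_p$-coefficients being a formal descent from finite subextensions of $\overline{\QQ}_p / K$) to obtain an exhaustive subsequence $C_{i_1}, C_{i_2}, \ldots$ with ring isomorphisms
\[
R := \End_{\FIsoc(X) \otimes \overline{\QQ}_p}(\calE) \xrightarrow{\sim} \End(\calE|_{C_{i_j}}) \quad \text{for each } j.
\]
Each $e_{i_j}$ corresponds to an idempotent $\tilde e_j \in R$. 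I claim these all coincide: for any $x \in X^\circ$, exhaustiveness condition (a) provides infinitely many $j$ with $x \in C_{i_j}$, and for each such $j$ the fiber $(\tilde e_j \calE)_x$ equals the intrinsic algebraic subspace $V_{\mathrm{alg},x}$. Since sub-vector bundles of a vector bundle on a reduced finite-type $k$-scheme are determined by their fibers at closed points, all the subobjects $\tilde e_j \calE$ coincide, and hence the $\tilde e_j$ are a single idempotent $e \in R$. Set $\calE_2 := e\calE$ and $\calE_1 := (1-e)\calE$.

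Part~(a) is then immediate: at each $x \in X^\circ$, $(\calE_2)_x = V_{\mathrm{alg},x}$, so $c(\calE_2, x) = \rank(\calE_{2,x}) = c(\calE, x)$, and uniqueness follows from the same fiberwise characterization applied to any candidate splitting. For part~(b), the curve-level splitting from step one already lies in $\FIsoc^\dagger(C_j) \otimes \overline{\QQ}_p$, and its convergent realization necessarily equals $\calE|_{C_j} = \calE_1|_{C_j} \oplus \calE_2|_{C_j}$ by the fiberwise characterization; thus each $\calE_i|_{C_j}$ is the restriction of $\calF_{i,j}^\dagger$, as required. The principal technical hurdle is the curve-level splitting in step one, which is where the substance of companions-theoretic input from \cite[\S 8.4]{kedlaya-companions2} enters; the remaining gluing is formal once Lemma~\ref{L:H0 on curves} and fiberwise rigidity of sub-bundles are in hand. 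A secondary bookkeeping task is extending Lemma~\ref{L:H0 on curves} to $\overline{\QQ}_p$-coefficients, but this is straightforward.
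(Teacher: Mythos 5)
Your strategy is the same as the paper's: use \cite[Corollary~8.4.4]{kedlaya-companions2} to split on each curve, lift the resulting projectors via Lemma~\ref{L:H0 on curves} applied to $\calE^\dual\otimes\calE$, and glue. However, the step where you argue that the lifted idempotents $\tilde e_j$ all coincide has a genuine gap. You observe that for $x\in C_{i_j}$ the fiber $(\tilde e_j\calE)_x$ is the algebraic generalized eigenspace $V_{\mathrm{alg},x}$, and then invoke ``sub-bundles are determined by their fibers.'' But for a fixed $j$ you have no control over $(\tilde e_j\calE)_x$ at points $x\notin C_{i_j}$, so you have not shown that $\tilde e_j\calE$ and $\tilde e_{j'}\calE$ have equal fibers at any common set of closed points, and the principle you cite cannot yet be applied. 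One way to close the gap: after discarding an initial segment of the subsequence (using exhaustiveness condition~(a)) assume all the $C_{i_j}$ pass through a common point $x_0$; then to show $\tilde e_j = \tilde e_{j'}$, use the isomorphism $\End(\calE)\cong\End(\calE|_{C_{i_{j'}}})$ to reduce to showing $\tilde e_j|_{C_{i_{j'}}} = e_{i_{j'}}$, and observe that the difference is a morphism of convergent $F$-isocrystals on the connected curve $C_{i_{j'}}$ whose image has locally constant rank and which vanishes at $x_0$ (both idempotents project onto $V_{\mathrm{alg},x_0}$ there).

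For part~(b) you both over-claim and inherit this gap: the conclusion is required for \emph{every} $j$, not just $j$ in the refined subsequence, and it only asserts that $\calE_i|_{C_j}$ is the restriction of some overconvergent object, not that it coincides with the canonical algebraic/non-algebraic splitting $\calF_{i,j}$. The paper's route is simpler and avoids this: once $\calE=\calE_1\oplus\calE_2$ is built in $\FIsoc(X)\otimes\overline{\QQ}_p$, restriction to any $C_j$ gives an idempotent in $\End_{\FIsoc(C_j)\otimes\overline{\QQ}_p}(\calE|_{C_j})$, and full faithfulness (Lemma~\ref{L:fully faithful2} on $C_j$) identifies this with $\End_{\FIsoc^\dagger(C_j)\otimes\overline{\QQ}_p}(\calE^\dagger|_{C_j})$, so the summands of $\calE|_{C_j}$ are automatically overconvergent, for every $j$, with no further matching required.
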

\begin{proof}
We may assume that $X$ is affine.
If $X$ is a curve, this is a direct consequence of \cite[Corollary~8.4.4]{kedlaya-companions2}.
For general $X$, for each $j$ we obtain a decomposition as in (a); the projector onto the first factor corresponds to an element of $H^0_F(C_j, (\calE^\dual \otimes \calE)|_{C_j})$.
By Lemma~\ref{L:H0 on curves}, on some exhaustive subsequence these projectors are the restrictions of a single endomorphism of $\calE$; this endomorphism is necessarily a projector.
The corresponding decomposition of $\calE$ satisfies (a) by construction and (b) by Lemma~\ref{L:fully faithful2}.
\end{proof}

\begin{remark} \label{R:uniformly algebraic}
For $\calE \in \FIsoc^\dagger(X) \otimes \overline{\QQ}_p$ algebraic, Deligne's construction
\cite[Theorem~3.4.2]{kedlaya-companions1} shows that $\calE$ is in fact \emph{uniformly algebraic} in the sense that the characteristic polynomials of Frobenius at all $x \in X^\circ$ all have coefficients in a single number field. This does not however imply that the \emph{eigenvalues} of Frobenius all belong to a single number field.

On a related note, the corresponding implication for $\calE \in \FIsoc(X) \otimes \overline{\QQ}_p$ fails: if $\calE$ is algebraic, it need not be uniformly algebraic. For example, let $X$ be the ordinary locus of some modular curve, let $\calF \in \FIsoc^\dagger(X) \otimes \overline{\QQ}_p$ be the tautological object of rank 2 (i.e., the middle relative cohomology of the universal elliptic curve over $X$), and let $\calE \in \FIsoc(X) \otimes \overline{\QQ}_p$ be the unit-root subobject of $\calF$ (of rank 1). Then for each $x \in \calE$ the Frobenius trace at $x$ is a quadratic irrational, but it can be shown that these traces do not generate a finite extension of $\QQ$.
As far as we know, this example illustrates the worst that can happen; see Question~\ref{Q:uniformly algebraic}.
\end{remark}

\section{Weight filtration}

\begin{defn}
Throughout this section, let $\overline{\QQ}$ denote the integral closure of $\QQ$ in $\overline{\QQ}_p$ and fix an embedding $\iota: \overline{\QQ} \to \CC$.
We say $\calE \in \FIsoc(X) \otimes \overline{\QQ}_p$ is \emph{$\iota$-pure of weight $w$} if for each $x \in X^\circ$ with residue field of order $q$,
the eigenvalues of Frobenius on $\calE_x$ all have $\iota$-absolute value $q^{w/2}$.
\end{defn}

\begin{lemma} \label{L:weight filtration}
Suppose that $\calE^\dagger \in \FIsoc^\dagger(X) \otimes \overline{\QQ}_p$ is algebraic.
\begin{enumerate}
\item[(a)]
If $\calE^\dagger$ is irreducible, then it is $\iota$-pure of some weight.
\item[(b)]
There exists a unique filtration 
\[
0 = \calE^\dagger_0 \subset \cdots \subset \calE^\dagger_l = \calE^\dagger
\]
in $\FIsoc^\dagger(X) \otimes \overline{\QQ}_p$ for which there exists an increasing sequence $w_1 < \cdots < w_n$ of real numbers for which $\calE^\dagger_i/\calE^\dagger_{i-1}$ is $\iota$-pure of weight $w_i$.
We call this filtration the \emph{weight filtration} (or more precisely the \emph{$\iota$-weight filtration}) of $\calE^\dagger$.
\end{enumerate}
\end{lemma}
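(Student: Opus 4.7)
The plan is to prove (b) and deduce (a) as a special case: irreducibility of $\calE^\dagger$ forces the filtration in (b) to have $l=1$, so $\calE^\dagger$ itself is $\iota$-pure of weight $w_1$. Uniqueness in (b) is formal from the vanishing $\Hom(\calF,\calG)=0$ whenever $\calF,\calG$ are $\iota$-pure of distinct weights; this $\Hom$-vanishing reduces to the analogous statement on curves via Lemma~\ref{L:H0 on curves} (applied to the internal Hom $\calF^\dual \otimes \calG$), where it is standard. I therefore focus on existence, which I will prove by induction on $\rank(\calE^\dagger)$.

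Since the filtration to be constructed is unique, it will glue across a Zariski cover, so I may replace $X$ by a dense open admitting a good compactification $\overline{X}$ with boundary $Z$. Fix an exhaustive sequence of smooth curves $C_1,C_2,\dots$ in $\overline{X}$, each meeting $X$, via Corollary~\ref{C:existence of exhaustive sequence}. Each restriction $\calE^\dagger|_{C_i \cap X}$ is algebraic, so by the curve case of the weight theorem---resting on Abe's theorem, cf.\ \cite[Corollary~8.4.4]{kedlaya-companions2}---it admits a unique weight filtration $0 = \calF^\dagger_{i,0} \subset \cdots \subset \calF^\dagger_{i,l_i} = \calE^\dagger|_{C_i \cap X}$ whose graded pieces are $\iota$-pure of weights $w_{i,1} < \cdots < w_{i,l_i}$. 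The multiset of Frobenius weights at a closed point depends only on $\calE^\dagger$ there; by condition~(a) of exhaustivity any two $C_i$ share a common closed point, so the integers $l_i$, $w_{i,j}$, and $\rank \calF^\dagger_{i,j}$ are independent of $i$---write them $l$, $w_j$, $r_j$. If $l=1$, then every Frobenius eigenvalue at every closed point of $X$ has $\iota$-weight $w_1$, so $\calE^\dagger$ is $\iota$-pure of weight $w_1$ and the filtration is trivial.

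If $l>1$, the crux---and the principal obstacle---is to glue the coherent family $(\calF^\dagger_{i,1})_i$, canonical across $i$ by uniqueness of the weight filtration on each curve, into a single subobject $\calG^\dagger \subset \calE^\dagger$ in $\FIsoc^\dagger(X) \otimes \overline{\QQ}_p$; the induction hypothesis applied to $\calE^\dagger/\calG^\dagger$ then completes the filtration. I envision the gluing in two stages. First, produce a convergent subobject $\calG \subset \calE$ in $\FIsoc(X) \otimes \overline{\QQ}_p$ via Lemma~\ref{L:H0 on curves} applied to an auxiliary object encoding the rank-$r_1$ subobject (for instance via the Pl\"ucker line $\wedge^{r_1} \calF^\dagger_{i,1} \subset \wedge^{r_1} \calE^\dagger|_{C_i}$, a Frobenius-invariant rank-one system that globalizes through an $H^0_F$ computation, from which $\calG$ can then be recovered), with uniqueness of $\calG$ supplied by Corollary~\ref{C:restriction to enough curves}. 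Second, apply Lemma~\ref{L:overconvergent extension from curves} to $0 \to \calG \to \calE \to \calE/\calG \to 0$ to promote $\calG$ to $\calG^\dagger \in \FIsoc(\overline{X},Z) \subseteq \FIsoc^\dagger(X)$, once one has verified that each restriction $\calG|_{C_i \cap X}$ lies in $\FIsoc(C_i, C_i \cap Z)$. Checking this docility is the most delicate point, since pure overconvergent $F$-isocrystals on curves are not automatically log along the boundary; the argument must exploit the algebraicity of $\calE^\dagger$---for instance via Abe's local-monodromy results guaranteeing tameness of pure algebraic objects after a suitable tame pullback---to conclude.
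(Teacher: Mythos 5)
The paper's own proof is a one-line citation to \cite[Theorem~3.1.9]{kedlaya-companions1}; the weight filtration for algebraic overconvergent $F$-isocrystals is already established there, building on Abe's $p$-adic Langlands correspondence on curves. Your proposal reproves this from scratch via the paper's gluing machinery, which is a genuinely different route. Unfortunately, it has a real gap in the final step.

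The crux is your invocation of Lemma~\ref{L:overconvergent extension from curves}. First, that lemma promotes the \emph{middle} term of an exact sequence: it requires the two outer terms to already be restrictions of objects of $\FIsoc^\dagger(X)$, and then concludes that the extension is docile. Applied to $0 \to \calG \to \calE \to \calE/\calG \to 0$, it could only tell you something about $\calE$ (which you already know to be overconvergent), not about $\calG$; and it cannot be applied at all because you do not yet know $\calG$ or $\calE/\calG$ to be overconvergent. If instead you try $0 \to 0 \to \calG \to \calG \to 0$ the hypothesis that $\calE_2^\dagger = \calG^\dagger$ exists in $\FIsoc^\dagger(X)$ is exactly what you want to prove, so the argument is circular. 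Second, and more fundamentally, the docility hypothesis $\calE|_{C_i \times_{\overline X} X} \in \FIsoc(C_i, C_i \times_{\overline X} Z)$ simply fails in general: algebraic overconvergent $F$-isocrystals on curves can be wildly ramified at the boundary, and $\iota$-purity does not force tame (much less docile) local monodromy. This is precisely why Theorem~\ref{T:extend} imposes the extra tameness hypothesis through the dominant morphism $f$, whereas Lemma~\ref{L:weight filtration} has no such hypothesis and so cannot be reached by this route. Your appeal to ``Abe's local-monodromy results guaranteeing tameness of pure algebraic objects after a suitable tame pullback'' is not correct as stated; tameness is only achieved after a finite cover that may itself be wild, and even then only under additional constraints. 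The gluing step you outline via Pl\"ucker embeddings and Lemma~\ref{L:H0 on curves} is plausible in outline but is not actually what blocks the proof; the docility/ordering issue above is.
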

\begin{proof}
See \cite[Theorem~3.1.9]{kedlaya-companions1}.
\end{proof}

\begin{lemma} \label{L:splitting pure exact sequence}
Let 
\[
0 \to \calE^\dagger_1 \to \calE^\dagger \to \calE^\dagger_2 \to 0
\]
be an exact sequence in $\FIsoc^\dagger(X) \otimes \overline{\QQ}_p$ in which $\calE^\dagger_1$ and $\calE^\dagger_2$ are both $\iota$-pure of the same weight $w$.
If $H^0_F(X, \calE_2^{\dagger \dual} \otimes \calE_1^\dagger) = 0$, then the exact sequence splits.
\end{lemma}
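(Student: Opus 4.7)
The plan is to reformulate the splitting problem as a vanishing statement for a single $H^1_F$-group and then dispose of it via the Hochschild--Serre sequence combined with a Weil II-type weight estimate.

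Put $\calF^\dagger := \calE_2^{\dagger\dual} \otimes \calE_1^\dagger$. The given short exact sequence corresponds to a class in
\[
\Ext^1_{\FIsoc^\dagger(X)\otimes\overline{\QQ}_p}(\calE_2^\dagger, \calE_1^\dagger) \cong H^1_F(X, \calF^\dagger),
\]
so splitting is equivalent to this class being zero. Since $\calE_1^\dagger$ and $\calE_2^\dagger$ are both $\iota$-pure of weight $w$, the object $\calF^\dagger$ is $\iota$-pure of weight $0$, and I claim the stronger statement that $H^1_F(X, \calF^\dagger) = 0$.

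To prove this I use the Hochschild--Serre short exact sequence associated to \eqref{eq:H1F overconvergent},
\[
0 \to H^0(X, \calF^\dagger)_\varphi \to H^1_F(X, \calF^\dagger) \to H^1(X, \calF^\dagger)^\varphi \to 0,
\]
and show that the two outer terms vanish. For the left term, $H^0(X, \calF^\dagger)$ is a finite-dimensional $\overline{\QQ}_p$-vector space on which $\varphi$ acts, and the hypothesis forces $H^0(X, \calF^\dagger)^\varphi = H^0_F(X, \calF^\dagger) = 0$; hence $1$ is not an eigenvalue of $\varphi$, so $\varphi - 1$ is bijective on $H^0(X, \calF^\dagger)$ and the coinvariants $H^0(X, \calF^\dagger)_\varphi$ also vanish. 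For the right term, I appeal to a Weil II-type weight estimate for overconvergent $F$-isocrystals: since $\calF^\dagger$ is $\iota$-pure of weight $0$, the Frobenius eigenvalues on $H^1(X, \calF^\dagger)$ all have $\iota$-weight $\geq 1$, so in particular $1$ (of weight $0$) is not an eigenvalue, and $H^1(X, \calF^\dagger)^\varphi = 0$.

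The key external input --- and the main obstacle to a wholly self-contained proof --- is the $p$-adic Weil II weight bound for $H^1$ of a pure overconvergent $F$-isocrystal on a smooth $k$-scheme, which I would cite from \cite{kedlaya-companions1} or the primary Weil II references therein. Once this is invoked, everything else is a formal consequence of the Hochschild--Serre sequence together with the identification $H^0_F \cong H^0(X, -)^{\varphi}$ and the finite-dimensionality of $H^0$ and $H^1$.
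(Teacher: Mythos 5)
Your argument is correct and follows essentially the same route as the paper: both reduce to vanishing of the outer terms of the Hochschild--Serre sequence \eqref{eq:H1F overconvergent}, dispose of the $H^0$-coinvariants using finite-dimensionality together with the hypothesis $H^0_F = 0$, and dispose of the $H^1$-invariants by the weight bound (the paper cites \cite[Lemma~3.1.3]{kedlaya-companions1} for exactly this). The only cosmetic difference is that you phrase the $H^0$ step via ``$\varphi - 1$ bijective'' rather than the equality of dimensions of invariants and coinvariants, which is the same fact.
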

\begin{proof}
Since $H^0(X, \calE_2^{\dagger \dual} \otimes \calE_1^\dagger)$ is a finite-dimensional $\overline{\QQ}_p$-vector space, we have
\[
\dim_{\overline{\QQ}_p} H^0(X, \calE_2^{\dagger \dual} \otimes \calE_1^\dagger)_\varphi
=
\dim_{\overline{\QQ}_p} H^0(X, \calE_2^{\dagger \dual} \otimes \calE_1^\dagger)^\varphi
=
\dim_{\overline{\QQ}_p} H^0_F(X, \calE_2^{\dagger \dual} \otimes \calE_1^\dagger) = 0.
\]
Meanwhile, by the theory of weights  \cite[Lemma~3.1.3]{kedlaya-companions1}, we have $H^1(X, \calE_2^{\dagger \dual} \otimes \calE_1)^\varphi = 0$.
By \eqref{eq:H1F overconvergent}, we obtain the desired splitting.
\end{proof}
\begin{cor} \label{C:pure isotypical}
Suppose that $\calE^\dagger \in \FIsoc^\dagger(X)$ is $\iota$-pure of some weight. Then $\calE^\dagger$ admits an isotypical decomposition, i.e., a direct sum decomposition
in which each summand is a successive extension of copies of a single irreducible object.
\end{cor}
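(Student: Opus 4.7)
The plan is to split off one isomorphism class of Jordan--H\"older factor at a time, using Lemma~\ref{L:splitting pure exact sequence} as the splitting engine, and then to induct on the length of $\calE^\dagger$. We work throughout in $\FIsoc^\dagger(X) \otimes \overline{\QQ}_p$, which is a $\overline{\QQ}_p$-linear abelian tensor category with duals and internal Homs, so that the notions of subobject, quotient, image, and Jordan--H\"older constituent all behave as usual.

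First, observe that every irreducible subquotient of $\calE^\dagger$ is itself $\iota$-pure of the same weight $w$: it is $\iota$-pure of \emph{some} weight by Lemma~\ref{L:weight filtration}(a), and the eigenvalues of Frobenius at any $x \in X^\circ$ on a subquotient form a sub-multiset of those on $\calE^\dagger$, all of which have $\iota$-absolute value $q^{w/2}$. In particular, every subobject and every quotient of $\calE^\dagger$ is again $\iota$-pure of weight~$w$. Now fix an irreducible $\calG^\dagger$ appearing as a Jordan--H\"older constituent of $\calE^\dagger$, and let $\calE^{\dagger,[G]} \subseteq \calE^\dagger$ be the sum of all subobjects of $\calE^\dagger$ whose Jordan--H\"older constituents are all isomorphic to $\calG^\dagger$. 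Because a sum of such subobjects is again such, $\calE^{\dagger,[G]}$ is the unique maximal $\calG^\dagger$-isotypical subobject, and it is nonzero.

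Next I claim that $\calF^\dagger := \calE^\dagger/\calE^{\dagger,[G]}$ has no Jordan--H\"older constituent isomorphic to $\calG^\dagger$. Indeed, if it did, we could lift to a subobject $\calE^{\dagger\prime} \subseteq \calE^\dagger$ strictly containing $\calE^{\dagger,[G]}$ with $\calE^{\dagger\prime}/\calE^{\dagger,[G]} \cong \calG^\dagger$; then the constituents of $\calE^{\dagger\prime}$ would all still be copies of $\calG^\dagger$, contradicting maximality. Consequently any morphism $\calF^\dagger \to \calE^{\dagger,[G]}$ has image that is simultaneously a quotient of $\calF^\dagger$ (so has no $\calG^\dagger$-constituent) and a subobject of $\calE^{\dagger,[G]}$ (so has only $\calG^\dagger$-constituents), forcing the image to vanish. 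Therefore
\[
H^0_F\bigl(X,\ (\calF^\dagger)^{\dual} \otimes \calE^{\dagger,[G]}\bigr) = 0,
\]
and since $\calE^{\dagger,[G]}$ and $\calF^\dagger$ are both $\iota$-pure of weight $w$, Lemma~\ref{L:splitting pure exact sequence} splits the short exact sequence $0 \to \calE^{\dagger,[G]} \to \calE^\dagger \to \calF^\dagger \to 0$.

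Finally, I induct on $\rank(\calE^\dagger)$. The case $\rank(\calE^\dagger) = 1$ is trivial. In general, the splitting just obtained writes $\calE^\dagger \cong \calE^{\dagger,[G]} \oplus \calF^\dagger$ with $\rank(\calF^\dagger) < \rank(\calE^\dagger)$; applying the inductive hypothesis to $\calF^\dagger$ (which is $\iota$-pure of weight $w$) yields an isotypical decomposition of $\calF^\dagger$, and combining with $\calE^{\dagger,[G]}$ gives one for $\calE^\dagger$. The step I expect to require the most care in the writeup is the incompatibility-of-subobjects-and-quotients argument giving the $H^0_F$-vanishing, since it is the only place where the maximality in the definition of $\calE^{\dagger,[G]}$ is genuinely used; everything else is formal once purity is in hand.
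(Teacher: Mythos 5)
Your strategy — peel off the $\calG^\dagger$-isotypical piece via Lemma~\ref{L:splitting pure exact sequence} and induct on length — is the standard one and matches the argument sketched in the cited reference. However, there is a real gap in two places, and it is the same gap: you implicitly assume that a Jordan--H\"older constituent of a finite-length object can be realized as a \emph{subobject}. This is false in a general abelian category (think of a non-split extension $0 \to A \to E \to B \to 0$ with $A \not\cong B$ irreducible: $B$ is a constituent of $E$ but not a subobject). You use it first when you assert that $\calE^{\dagger,[G]}$ is nonzero merely because $\calG^\dagger$ is a constituent, and again in the sentence ``if it did, we could lift to a subobject $\calE^{\dagger\prime} \subseteq \calE^\dagger \dots$ with $\calE^{\dagger\prime}/\calE^{\dagger,[G]} \cong \calG^\dagger$'' — a constituent of $\calF^\dagger$ need not be a subobject of $\calF^\dagger$, so the lift you describe need not exist.

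The claim \emph{is} true in the $\iota$-pure setting, but it requires an argument, and the argument uses Lemma~\ref{L:splitting pure exact sequence} once more: prove by induction on length that if $\calE^\dagger$ is $\iota$-pure of weight $w$ and an irreducible $\calG^\dagger$ is a constituent of $\calE^\dagger$, then $\calG^\dagger$ is a subobject of $\calE^\dagger$. (Sketch: if $\calG^\dagger$ is not already a subobject, then $\Hom(\calG^\dagger, \calS) = 0$ where $\calS$ is the socle; by induction $\calG^\dagger$ embeds in $\calE^\dagger/\calS$, and Lemma~\ref{L:splitting pure exact sequence} applied to the preimage of that copy splits off a subobject of $\calE^\dagger$ isomorphic to $\calG^\dagger$, a contradiction.) With this auxiliary lemma in hand, both uses are justified and the rest of your argument goes through. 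An easy cosmetic improvement: choose $\calG^\dagger$ to be an irreducible \emph{subobject} of $\calE^\dagger$ (one exists since $\calE^\dagger$ has finite length), which makes $\calE^{\dagger,[G]} \neq 0$ trivial; but you still need the auxiliary lemma for the lifting step applied to $\calF^\dagger$. Also note that your invocation of Lemma~\ref{L:weight filtration}(a) is unnecessary (and strictly speaking requires you to first observe that purity implies algebraicity): the sub-multiset observation about Frobenius eigenvalues already shows directly that every constituent is $\iota$-pure of weight $w$.
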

\begin{proof}
This follows from Lemma~\ref{L:splitting pure exact sequence} as in \cite[Corollary~3.1.4]{kedlaya-companions1}.
\end{proof}
\begin{cor} \label{C:lowest seight subobject}
Suppose that $\calE^\dagger \in \FIsoc^\dagger(X)$ is $\iota$-pure of some weight. Then there is a subobject of $\calE^\dagger$ which splits as a direct sum of pairwise
nonisomorphic irreducible subobjects representing all of the isomorphism classes of constituents of $\calE$; there is also a quotient of the same form.
\end{cor}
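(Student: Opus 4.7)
The plan is to deduce this directly from the isotypical decomposition provided by Corollary~\ref{C:pure isotypical}, using only formal abelian-category manipulations on each isotypical summand.

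First, I would invoke Corollary~\ref{C:pure isotypical} to write
\[
\calE^\dagger \cong \bigoplus_{j=1}^{m} \calF^\dagger_j
\]
where each $\calF^\dagger_j$ is a successive extension of copies of a single irreducible object $\calG^\dagger_j$ and the $\calG^\dagger_j$ are pairwise nonisomorphic. The set $\{\calG^\dagger_1,\dots,\calG^\dagger_m\}$ then exhausts the isomorphism classes of Jordan--H\"older constituents of $\calE^\dagger$, since on each $\calF^\dagger_j$ only $\calG^\dagger_j$ appears as a constituent.

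Next, for each $j$ I would choose a composition series
\[
0 = \calF^\dagger_{j,0} \subset \calF^\dagger_{j,1} \subset \cdots \subset \calF^\dagger_{j,n_j} = \calF^\dagger_j
\]
with $\calF^\dagger_{j,i}/\calF^\dagger_{j,i-1} \cong \calG^\dagger_j$. The subobject $\calF^\dagger_{j,1} \hookrightarrow \calF^\dagger_j$ is isomorphic to $\calG^\dagger_j$, and the quotient $\calF^\dagger_j \twoheadrightarrow \calF^\dagger_j/\calF^\dagger_{j,n_j-1}$ is also isomorphic to $\calG^\dagger_j$. Summing over $j$, I obtain a subobject
\[
\bigoplus_{j=1}^m \calG^\dagger_j \;\hookrightarrow\; \bigoplus_{j=1}^m \calF^\dagger_j \;\cong\; \calE^\dagger
\]
and a quotient
\[
\calE^\dagger \;\cong\; \bigoplus_{j=1}^m \calF^\dagger_j \;\twoheadrightarrow\; \bigoplus_{j=1}^m \calG^\dagger_j
\]
of the required form: each is a direct sum of pairwise nonisomorphic irreducible objects, namely one representative from every isomorphism class of constituents of $\calE^\dagger$.

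There is essentially no obstacle here, since Corollary~\ref{C:pure isotypical} has already done the substantive work of splitting $\calE^\dagger$ into isotypical blocks; the remainder is the standard observation that any nonzero object whose constituents are all isomorphic to a single irreducible $\calG$ admits $\calG$ both as a subobject (the bottom of any composition series) and as a quotient (the top). The only point worth checking carefully is that the collection of $\calG^\dagger_j$'s truly accounts for every isomorphism class of constituent of $\calE^\dagger$, which follows from the Jordan--H\"older theorem applied to each $\calF^\dagger_j$ together with the direct sum decomposition.
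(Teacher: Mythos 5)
Your argument is correct and matches the paper's intent exactly: the paper dispatches this as ``Immediate from Corollary~\ref{C:pure isotypical},'' and your write-up simply makes explicit why it is immediate (one representative from the bottom and top of a composition series in each isotypical block). Nothing to change.
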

\begin{proof}
Immediate from Corollary~\ref{C:pure isotypical}.
\end{proof}

\section{Overconvergence by restrictions to curves}

We now address the question discussed in the introduction and prove our main result (Theorem~\ref{T:extend}). Note that in contrast with our previous work, here we cannot restrict attention to an exhaustive sequence of curves; see Question~\ref{Q:only use exhaustive sequence} and subsequent discussion.

\begin{question}  \label{Q:cut by curves}
Suppose that for some $\calE \in \FIsoc(X)$, for every smooth curve $C$ in $X$, we have $\calE|_{C} \in \FIsoc^\dagger(C)$. Does it then follow that $\calE \in \FIsoc^\dagger(X)$?
\end{question}

We start with a result that yields a weaker conclusion.

\begin{lemma} \label{L:apply companions}
Suppose that for some $\calE \in \FIsoc(X)$, for every smooth curve $C$ in $X$, $\calE|_{C}$ is the restriction of an object $\calE^\dagger|_C \in \FIsoc^\dagger(C)$.
Suppose further that there exists a dominant morphism $f: Y \to X$ such that for every smooth curve $C$ in $Y$, $f^* \calE|_C$ is a tame object of $\FIsoc^\dagger(C)$.
Then there exists $\calF^\dagger \in \FIsoc^\dagger(X) \otimes \overline{\QQ}_p$ such that $\calE_x$ and $\calF_x$ have the same characteristic polynomial of Frobenius for every $x \in X^\circ$.
(That is to say, they have the same semisimplification in $\FIsoc(x) \otimes \overline{\QQ}_p$.)
\end{lemma}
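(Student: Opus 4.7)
The plan is to invoke the companion machinery of \cite{kedlaya-companions1, kedlaya-companions2}; the tameness hypothesis provided by $f$ is precisely what is needed to make this machinery applicable in our setting.

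\textbf{Step 1: $\ell$-adic companions on curves.} Fix an auxiliary prime $\ell \neq p$ and a field isomorphism $\overline{\QQ}_p \cong \overline{\QQ}_\ell$. For each smooth curve $C$ in $X$, the first hypothesis gives $\calE^\dagger|_C \in \FIsoc^\dagger(C)$, while the second ensures, after pullback to suitable curves in $Y$, that the ramification is tame after alteration. This is the input required for Abe's $p$-to-$\ell$ companion theorem on curves, which produces a lisse Weil $\overline{\QQ}_\ell$-sheaf $\calL_C$ on $C$ whose Frobenius characteristic polynomials match those of $\calE^\dagger|_C$ at every closed point.

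\textbf{Step 2: Gluing via Drinfeld.} The family $\{\calL_C\}$ indexed by curves $C$ in $X$ is automatically a compatible system in the sense required for extending from curves to $X$: matching of Frobenius characteristic polynomials at closed points determines the semisimplifications and thus yields compatibility on intersections. Combined with the uniform control on ramification supplied by $f$, this is exactly the input needed for Drinfeld's extension theorem in the form packaged in \cite{kedlaya-companions2}; applying it produces a lisse Weil $\overline{\QQ}_\ell$-sheaf $\calL$ on $X$ whose restriction to each $C$ recovers $\calL_C$ up to semisimplification.

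\textbf{Step 3: $\ell$-to-$p$ companion.} Invoking the $\ell$-to-$p$ companion theorem on $X$ (again from \cite{kedlaya-companions2}, with the same tameness input) converts $\calL$ into an overconvergent $F$-isocrystal $\calF^\dagger \in \FIsoc^\dagger(X) \otimes \overline{\QQ}_p$ whose Frobenius characteristic polynomials at every $x \in X^\circ$ match those of $\calL$, and hence of $\calE$, as desired.

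The main obstacle is verifying that the hypotheses of Drinfeld's extension theorem are met: one needs a uniform bound on ramification across the entire family $\{\calL_C\}$, and this is exactly the content of the tameness-after-$f$ hypothesis. Once this input is unpacked, the remainder of the argument is a formal assembly of results already present in \cite{kedlaya-companions1, kedlaya-companions2}. A minor ambiguity is that companions are only determined up to semisimplification, but this is harmless because the conclusion only concerns characteristic polynomials at closed points.
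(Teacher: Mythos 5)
Your three-step plan --- $p$-to-$\ell$ companions curve-by-curve, gluing via Drinfeld's extension theorem, then $\ell$-to-$p$ on $X$ --- matches the route the paper takes, and you correctly flag the tameness-after-$f$ hypothesis as what makes Drinfeld's extension theorem applicable.

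The gap is that you never reduce to the case where $\calE$ is algebraic (in fact, uniformly algebraic). Step~1 tacitly assumes that each $\calE^\dagger|_C$ has algebraic Frobenius eigenvalues: a lisse Weil $\overline{\QQ}_\ell$-sheaf has Weil-number (hence algebraic) eigenvalues, so no $\ell$-adic companion can exist otherwise; and Drinfeld's theorem moreover needs the Frobenius characteristic polynomials to have coefficients in a single number field. If $\calE$ is not algebraic one can still apply Abe's curve-level theorem after a constant twist, but then the resulting $\calL_C$ no longer record the characteristic polynomials of $\calE^\dagger|_C$, and it is not clear that the twists can be chosen coherently across all curves. The paper disposes of this at the outset: it invokes Lemma~\ref{L:split algebraic eigenvalues} to split off, globally on $X$, the algebraic part of $\calE$, and then uses constant twists together with \cite[Corollary~3.4.3]{kedlaya-companions1} to reduce to the uniformly algebraic case. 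This reduction is not a formality --- it is what makes the whole companion machinery applicable --- and your proposal needs to supply it.
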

\begin{proof}
Using Lemma~\ref{L:split algebraic eigenvalues} and constant twists, we may reduce to the case where $\calE$ is algebraic, or equivalently uniformly algebraic \cite[Corollary~3.4.3]{kedlaya-companions1}.
 Then the restrictions $\calE|_C$ form a \emph{skeleton sheaf} in the sense of Drinfeld's theorem \cite[Theorem~2.5]{drinfeld-deligne};
by combining that result with the existence of crystalline companions \cite[Theorem~0.1.2]{kedlaya-companions2}, we may deduce the claim.
\end{proof}

\begin{remark}
It should be possible to extend Lemma~\ref{L:apply companions} by imposing the condition only on curves $C$ in an exhaustive sequence.
The missing ingredient to prove this is to upgrade \cite[Theorem~2.5]{drinfeld-deligne} to rely on a similar hypothesis.
\end{remark}

\begin{cor} \label{C:same semisimplification}
For $\calE, \calF^\dagger$ as in Lemma~\ref{L:apply companions}, the objects $\calE^\dagger|_C$ and $\calF^\dagger|_C$ in $\FIsoc^\dagger(C) \otimes \overline{\QQ}_p$
have the same semisimplification.
\end{cor}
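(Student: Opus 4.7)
My plan is to reduce the statement to a pointwise matching of Frobenius characteristic polynomials on $C$, and then invoke a Chebotarev-style recognition theorem for the semisimplification of overconvergent $F$-isocrystals on a curve.

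First I would fix a smooth curve $C$ in $X$ and observe that $\calE^\dagger|_C$ and $\calF^\dagger|_C$ are two objects of $\FIsoc^\dagger(C) \otimes \overline{\QQ}_p$ whose fibers at each $x \in C^\circ \subseteq X^\circ$ agree (respectively) with $\calE_x$ and $\calF_x^\dagger$. By the conclusion of Lemma~\ref{L:apply companions}, these two fibers have matching Frobenius characteristic polynomials for every $x \in C^\circ$, so $\calE^\dagger|_C$ and $\calF^\dagger|_C$ have matching Frobenius characteristic polynomials at every closed point of $C$.

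Next I would invoke the fact that, on a smooth geometrically irreducible curve over a finite field, the semisimplification of an object of $\FIsoc^\dagger(C) \otimes \overline{\QQ}_p$ is determined by its pointwise Frobenius characteristic polynomials. This is the $p$-adic counterpart of Chebotarev density for lisse Weil sheaves, and is a standard part of the companions framework. It may be assembled from tools already developed in the paper: Lemma~\ref{L:apply companions} ensures that after a constant twist both objects may be taken to be algebraic (this is exactly the reduction performed inside its proof), so the weight filtration of Lemma~\ref{L:weight filtration} applies on the curve; on each pure graded piece, Corollary~\ref{C:pure isotypical} provides an isotypical decomposition; and finally matching Frobenius characters over $\overline{\QQ}_p$ force matching multiplicities of irreducible constituents by a Brauer--Nesbitt-style argument.

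The hard part, I expect, will be the last step: one must execute the reduction through weights and isotypicals in the correct order so that equality of traces genuinely forces equality of multiplicities, rather than just being a necessary condition for it. Once this is organized, the conclusion reduces to linear algebra over $\overline{\QQ}_p$ applied to the Frobenius traces on each isotypical component, and the semisimplifications of $\calE^\dagger|_C$ and $\calF^\dagger|_C$ must then agree.
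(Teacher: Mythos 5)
Your first two steps --- reducing to pointwise agreement of Frobenius characteristic polynomials on $C$ via the conclusion of Lemma~\ref{L:apply companions}, then invoking a Chebotarev-type recognition theorem for objects of $\FIsoc^\dagger(C) \otimes \overline{\QQ}_p$ --- reproduce exactly the paper's strategy: the paper's entire proof is a citation of \cite[Theorem~3.3.1]{kedlaya-companions1}, which is precisely that recognition theorem.

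The trouble is in the attempt to reassemble that recognition theorem from the paper's internal toolkit, and you are right to flag the last step as the hard one --- but the difficulty is not merely a matter of ordering the reductions. After passing through the weight filtration (Lemma~\ref{L:weight filtration}) and the isotypical decomposition (Corollary~\ref{C:pure isotypical}), one is left with the assertion that a $\ZZ$-linear combination of the Frobenius-trace functions of pairwise nonisomorphic irreducibles in $\FIsoc^\dagger(C) \otimes \overline{\QQ}_p$ which vanishes at every closed point of $C$ must have all coefficients zero. That is not linear algebra over $\overline{\QQ}_p$; it is a linear-independence-of-characters statement, and it requires a genuine density input that the weight and isotypical machinery do not provide. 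In the $\ell$-adic setting one would supply it with Chebotarev density plus Brauer--Nesbitt, but an overconvergent $F$-isocrystal on $C$ is not a representation of a profinite fundamental group, so that route is unavailable and the needed argument has to be produced by other means --- which is exactly what \cite[Theorem~3.3.1]{kedlaya-companions1} does. The clean move is simply to cite that theorem rather than attempt to rederive it from Lemmas~\ref{L:weight filtration} and \ref{L:splitting pure exact sequence}.
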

\begin{proof}
Apply \cite[Theorem~3.3.1]{kedlaya-companions1}.
\end{proof}

\begin{cor} \label{C:first weight subobject}
Fix an embedding $\iota: \overline{\QQ} \to \CC$.
For $\calE, \calF^\dagger$ as in Lemma~\ref{L:apply companions} with $\calE$ algebraic, 
there is a subobject $\calF_0^\dagger$ of the first step of the $\iota$-weight filtration of $\calF^\dagger$
such that $\calF_0$ occurs as a subobject of $\calE$.
\end{cor}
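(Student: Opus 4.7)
Let $\calF_1^\dagger$ denote the first step of the $\iota$-weight filtration of $\calF^\dagger$ from Lemma~\ref{L:weight filtration}; it is $\iota$-pure of some weight $w_1$. Applying Corollary~\ref{C:lowest seight subobject} to $\calF_1^\dagger$ in $\FIsoc^\dagger(X) \otimes \overline{\QQ}_p$, we select pairwise non-isomorphic irreducible subobjects $\calG_1^\dagger, \ldots, \calG_r^\dagger \subset \calF_1^\dagger$, one for each isomorphism class of constituent. The plan is to show that, for some index $\alpha$, the underlying convergent object $\calG_\alpha$ embeds into $\calE$ in $\FIsoc(X) \otimes \overline{\QQ}_p$; we then set $\calF_0^\dagger := \calG_\alpha^\dagger$.

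We will produce the embedding on a curve and then transfer it globally. Fix an exhaustive sequence of smooth curves in $X$ and refine it in two stages. First, by applying Lemma~\ref{L:H0 on curves} to each of the finitely many internal Hom objects $\calG_\alpha^\dual \otimes \calE$, pass to an exhaustive subsequence $C_{i_1}, C_{i_2}, \ldots$ on which $\Hom_{\FIsoc(X) \otimes \overline{\QQ}_p}(\calG_\alpha, \calE) \cong \Hom_{\FIsoc(C_{i_j}) \otimes \overline{\QQ}_p}(\calG_\alpha|_{C_{i_j}}, \calE|_{C_{i_j}})$ for every $\alpha$ and $j$. Second, refine further---using Lemma~\ref{L:refine exhaustive sequence} combined with a Chebotarev-type density result, available in the present setup through the crystalline companions framework invoked in Lemma~\ref{L:apply companions} under the tameness hypothesis---so that $\calG_\alpha^\dagger|_{C_{i_j}}$ remains irreducible in $\FIsoc^\dagger(C_{i_j}) \otimes \overline{\QQ}_p$ for every $\alpha$ and $j$. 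On each such curve $C$, Corollary~\ref{C:same semisimplification} gives that $\calE^\dagger|_C$ and $\calF^\dagger|_C$ share a common semisimplification, so $W_{w_1}(\calE^\dagger|_C)$ and $\calF_1^\dagger|_C$ have the same irreducible constituent classes on $C$; Corollary~\ref{C:lowest seight subobject} applied to the $\iota$-pure object $W_{w_1}(\calE^\dagger|_C)$ then provides an embedding of each such constituent class (in particular, of $\calG_\alpha^\dagger|_C$) into $W_{w_1}(\calE^\dagger|_C) \subset \calE^\dagger|_C$.

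By Lemma~\ref{L:fully faithful2} on $C$ combined with the Hom identifications from the previous step, this embedding lifts uniquely to a global map $\phi_\alpha \colon \calG_\alpha \to \calE$ whose restriction to $C$ is injective. The kernel of $\phi_\alpha$ is an object of the abelian category $\FIsoc(X) \otimes \overline{\QQ}_p$, hence locally free of constant rank on the connected scheme $X$; since that rank is zero on $C$, the kernel vanishes globally, so $\phi_\alpha$ embeds $\calG_\alpha$ into $\calE$. The main obstacle is the second-stage refinement of the exhaustive sequence---arranging that irreducibility of each $\calG_\alpha^\dagger$ is preserved under restriction to the curves $C_{i_j}$. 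This is where the tameness hypothesis of Lemma~\ref{L:apply companions} is genuinely used: it feeds into the companions framework to supply the Chebotarev-type density statement needed to sustain irreducibility generically.
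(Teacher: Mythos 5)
There is a genuine gap: your second-stage refinement asks for an exhaustive subsequence of curves on which each $\calG_\alpha^\dagger|_{C_{i_j}}$ stays irreducible, and you justify this by appealing to ``a Chebotarev-type density result \ldots available through the crystalline companions framework.'' No such result is established in this paper, and it is not an automatic consequence of the tameness hypothesis in Lemma~\ref{L:apply companions}. The paper's own Remark~\ref{R:only use exhaustive sequence} explicitly flags that statements in this spirit (Drinfeld's construction, Hilbert irreducibility) are \emph{not} known to interact cleanly with the notion of an exhaustive sequence as defined here; preserving irreducibility along an exhaustive subsequence is a serious open issue, not something you can invoke. In particular, the closing sentence of your proof --- that this refinement is ``where the tameness hypothesis is genuinely used'' --- is mistaken: tameness is used in Lemma~\ref{L:apply companions} to produce $\calF^\dagger$ at all, not to control irreducibility of restrictions.

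The good news is that the irreducibility-preservation step is unnecessary, which is exactly how the paper's proof sidesteps it. You do not need to land a specific irreducible $\calG_\alpha^\dagger|_C$ inside $\calE^\dagger|_C$; you only need $\Hom(\calF_1^\dagger|_C, \calE^\dagger|_C)\neq 0$ for every curve $C$. This holds unconditionally: by Corollary~\ref{C:same semisimplification}, $\calE^\dagger|_C$ and $\calF^\dagger|_C$ have a common semisimplification, so $\calF_1^\dagger|_C$ and $W_{w_1}(\calE^\dagger|_C)$ (both $\iota$-pure of the same weight $w_1$) share at least one irreducible constituent $G$; by Corollary~\ref{C:lowest seight subobject}, $G$ is a quotient of $\calF_1^\dagger|_C$ and a subobject of $W_{w_1}(\calE^\dagger|_C)\subset\calE^\dagger|_C$, yielding a nonzero map $\calF_1^\dagger|_C\to\calE^\dagger|_C$ without any assumption that constituents restrict irreducibly. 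Having $H^0_F(C,(\calF_1^\dual\otimes\calE)|_C)\neq 0$ for \emph{every} $C$, one then invokes the restriction-to-curves result (Lemma~\ref{L:H0 on curves}) to pass to a curve in an exhaustive subsequence where $H^0_F(X,\calF_1^\dual\otimes\calE)\cong H^0_F(C,(\calF_1^\dual\otimes\calE)|_C)$, deducing that the global Hom-space is nonzero; the image of a nonzero map $\calF_1\to\calE$ supplies $\calF_0$. Your global-lifting and kernel-rank arguments at the end are fine, but the route you take to get the nonzero Hom on a curve rests on an unjustified refinement.
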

\begin{proof}
By Lemma~\ref{L:weight filtration}, $\calF^\dagger$ admits an $\iota$-weight filtration, which then restricts to the $\iota$-weight filtration on $\calF^\dagger|_C$ for every smooth curve $C$ in
$X$. Let $\calF_1^\dagger$ be the first step of this filtration of $\calF^\dagger$;
for every smooth curve $C$ in $X$, Corollary~\ref{C:same semisimplification} implies that $H^0_F(C, \calF_1^{\dagger, \dual}|_C \otimes \calE^\dagger|_C) \neq 0$ and hence
$H^0_F(C, (\calF_1^\dual \otimes \calE)|_C) \neq 0$.
By Lemma~\ref{L:H0 preservation}, $H^0_F(X, \calF_1^\dual \otimes \calE) \neq 0$; we take $\calF_0$ to be the image of a nonzero morphism $\calF_1 \to \calE$.
\end{proof}
\begin{cor} \label{C:successive extension of overconvergents}
Fix an embedding $\iota: \overline{\QQ} \to \CC$.
For $\calE$ as in Lemma~\ref{L:apply companions} and algebraic, 
there exists a filtration in $\FIsoc(X) \otimes \overline{\QQ}_p$
\[
0 = \calE_0 \subset \cdots \subset \calE_l = \calE
\]
such that for $i=1,\dots,l$, the following conditions hold.
\begin{enumerate}
\item[(a)]
The object $\calE_i/\calE_{i-1}$ of $\FIsoc(X) \otimes \overline{\QQ}_p$
is the restriction of an object $\calF_i^\dagger \in \FIsoc^\dagger(X) \otimes \overline{\QQ}_p$.
\item[(b)]
There exists a sequence $w_1 \leq \cdots \leq w_l$ of real numbers such that
$\calF_i^\dagger$ is $\iota$-pure of weight $w_i$.
\item[(c)]
The restriction of the filtration to $\calE|_C$ lifts to a filtration of $\calE^\dagger|_C$ in $\FIsoc^\dagger(X) \otimes \overline{\QQ}_p$.
\end{enumerate}
\end{cor}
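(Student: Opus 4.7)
The plan is to argue by induction on $\rank \calE$, the base case $\rank \calE = 0$ being trivial with $l = 0$. For the inductive step, fix an overconvergent companion $\calF^\dagger \in \FIsoc^\dagger(X) \otimes \overline{\QQ}_p$ of $\calE$ as produced by Lemma~\ref{L:apply companions}, and apply Corollary~\ref{C:first weight subobject} to extract a subobject $\calF_0^\dagger$ of the first step of the $\iota$-weight filtration of $\calF^\dagger$ such that $\calF_0$ embeds into $\calE$. Set $\calE_1 := \calF_0$ and $\calF_1^\dagger := \calF_0^\dagger$; then $\calF_1^\dagger$ is $\iota$-pure of weight $w_1$ equal to the smallest weight appearing in $\calF^\dagger$, verifying (a) and (b) for $i = 1$. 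For (c) at this step, Lemma~\ref{L:fully faithful2} lifts the inclusion $\calE_1|_C \hookrightarrow \calE|_C$ uniquely to an inclusion $\calF_0^\dagger|_C \hookrightarrow \calE^\dagger|_C$ in $\FIsoc^\dagger(C) \otimes \overline{\QQ}_p$.

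Next I apply the inductive hypothesis to $\calE' := \calE / \calE_1$. To verify the hypotheses of Lemma~\ref{L:apply companions} for $\calE'$, note that on each smooth curve $C$ in $X$ the quotient $\calE^\dagger|_C / \calF_0^\dagger|_C$ is an overconvergent lift of $\calE'|_C$, and that on each smooth curve $C$ in $Y$ the analogous quotient lift of $f^* \calE'|_C$ is tame as a quotient of a tame object (the inclusion $f^* \calE_1|_C \hookrightarrow f^* \calE|_C$ again lifting via Lemma~\ref{L:fully faithful2}). Since $\calE'$ is algebraic as a quotient of an algebraic object, induction produces a filtration $0 = \calE'_0 \subset \cdots \subset \calE'_{l-1} = \calE'$ with overconvergent pieces $\calF^\dagger_{i+1}$ $\iota$-pure of weights $w_2 \leq \cdots \leq w_l$. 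Since $\calF^\dagger / \calF_0^\dagger$ has all weights $\geq w_1$ and weights are determined by characteristic polynomials, $w_2 \geq w_1$.

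Finally, for $i \geq 2$ define $\calE_i$ as the preimage of $\calE'_{i-1}$ in $\calE$; the resulting filtration on $\calE$ has successive quotients matching those of the inductive filtration, verifying (a) and (b). For (c), the inductive filtration on $\calE'|_C$ lifts to a filtration of $\calE^\dagger|_C / \calF_0^\dagger|_C$ in $\FIsoc^\dagger(C) \otimes \overline{\QQ}_p$; its preimage under $\calE^\dagger|_C \twoheadrightarrow \calE^\dagger|_C / \calF_0^\dagger|_C$ is a filtration of $\calE^\dagger|_C$ beginning with $\calF_0^\dagger|_C$ that restricts to $\calE_\bullet|_C$. The main obstacle is the bookkeeping in this inductive step: ensuring both that the hypotheses of Lemma~\ref{L:apply companions} descend to $\calE/\calE_1$ and that the inductive lift of the filtration over $\calE^\dagger|_C / \calF_0^\dagger|_C$ splices correctly with $\calF_0^\dagger|_C$. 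Both issues are resolved by Lemma~\ref{L:fully faithful2}, which canonically lifts inclusions in $\FIsoc(C)$ to the overconvergent category whenever both source and target lift; no further analytic or geometric input is required beyond the results already in hand.
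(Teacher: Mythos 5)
Your proof is correct and follows essentially the same route as the paper: iterate Corollary~\ref{C:first weight subobject} to peel off pure pieces of lowest weight, passing to the quotient at each stage, and handle part (c) by lifting the filtration on each curve via full faithfulness. The paper compresses the induction into the phrase ``immediate from Corollary~\ref{C:first weight subobject}'' and invokes Lemma~\ref{L:lift filtration} (applied with $X$ replaced by each $C$) to get (c), whereas you inline that lemma's argument via Lemma~\ref{L:fully faithful2}; this is a difference of packaging, not substance, and your explicit verification that $\calE/\calE_1$ inherits the hypotheses of Lemma~\ref{L:apply companions} (and algebraicity) fills in details the paper leaves implicit.
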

\begin{proof}
Parts (a) and (b) are immediate from Corollary~\ref{C:first weight subobject}.
Part (c) then follows from Lemma~\ref{L:lift filtration}.
\end{proof}

\begin{theorem} \label{T:extend}
Suppose that for some $\calE \in \FIsoc(X)$, for every smooth curve $C$ in $X$, $\calE|_{C} \in \FIsoc(C)$ is the restriction of an object $\calE^\dagger|_C \in \FIsoc^\dagger(C)$.
Suppose further that there exists a dominant morphism $f: Y \to X$ such that for every smooth curve $C$ in $Y$, $f^* \calE^\dagger|_C$ is a tame object of $\FIsoc^\dagger(C)$.
Then $\calE \in \FIsoc^\dagger(X)$.
\end{theorem}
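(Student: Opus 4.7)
The plan is to realize $\calE$ as a successive extension of restrictions of overconvergent objects using a weight filtration of a crystalline companion produced by Lemma~\ref{L:apply companions}, and then inductively promote that filtration to the overconvergent category via Lemma~\ref{L:overconvergent extension from curves}.

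\textbf{Reductions.} By Lemma~\ref{L:pullback by dominant morphism} applied first to $f$ and then to a de Jong alteration providing a good compactification, it suffices to show $g^*\calE \in \FIsoc^\dagger(X')$ for a composite dominant morphism $g: X' \to X$ such that $X'$ admits a good compactification $\overline{X}$ with strict normal crossings boundary $Z$, and $(g^*\calE)|_C$ is the restriction of a tame object of $\FIsoc^\dagger(C)$ for every smooth curve $C$ in $X'$. Relabeling, I may assume $X$ itself has these properties. Then by splitting off the non-algebraic summand using Lemma~\ref{L:split algebraic eigenvalues} and applying a constant twist (to which overconvergence is insensitive) as in the proof of Lemma~\ref{L:apply companions}, I may further assume $\calE \in \FIsoc(X) \otimes \overline{\QQ}_p$ is algebraic.

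\textbf{Building the filtration.} Lemma~\ref{L:apply companions} produces a crystalline companion $\calF^\dagger \in \FIsoc^\dagger(X) \otimes \overline{\QQ}_p$ with matching Frobenius characteristic polynomials, and Corollary~\ref{C:successive extension of overconvergents} then supplies a filtration
\[
0 = \calE_0 \subset \cdots \subset \calE_l = \calE
\]
in $\FIsoc(X) \otimes \overline{\QQ}_p$ whose graded pieces are restrictions of $\iota$-pure objects $\calF_i^\dagger \in \FIsoc^\dagger(X) \otimes \overline{\QQ}_p$ of weights $w_1 \leq \cdots \leq w_l$, and which restricts on every smooth curve $C \subset X$ to a filtration of $\calE^\dagger|_C$ inside $\FIsoc^\dagger(C) \otimes \overline{\QQ}_p$.

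\textbf{Inductive promotion and descent.} I induct on $l$ to show $\calE_j \in \FIsoc(\overline{X}, Z) \otimes \overline{\QQ}_p$; the base case $l=1$ is immediate since $\calE_1 = \calF_1$. For $j > 1$, in the short exact sequence $0 \to \calE_{j-1} \to \calE_j \to \calF_j \to 0$ the outer terms already lie in $\FIsoc(\overline{X}, Z) \otimes \overline{\QQ}_p$, so I apply Lemma~\ref{L:overconvergent extension from curves} along an exhaustive sequence $C_1, C_2, \ldots$ of smooth curves in $\overline{X}$. The hypothesis to verify is that $\calE_j|_{C_i \cap X}$ extends docilely to $C_i$; this is where I expect the main obstacle, since a priori the filtration pieces restricted to $C_i \cap X$ are only tame (inherited from $\calE^\dagger|_{C_i \cap X}$), not necessarily docile (having nilpotent residues). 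I would handle this by arranging the composite alteration $g$ at the reduction stage so as to kill prime-to-$p$ monodromy around the boundary components (a suitable tame cover of $\overline{X}$ ramified along $Z$ suffices), so that tame and docile coincide on the relevant curves; Lemma~\ref{L:log extensions} then promotes $\calE_j|_{C_i \cap X}$ to $\FIsoc(C_i, C_i \cap Z) \otimes \overline{\QQ}_p$, and Lemma~\ref{L:overconvergent extension from curves} closes the induction. To descend from $\otimes \overline{\QQ}_p$ back to $\FIsoc^\dagger(X)$, use that $\calE$ is already defined over $K$ and that overconvergent lifts of convergent isocrystals are unique by Lemma~\ref{L:fully faithful2}, so a straightforward Galois descent yields $\calE \in \FIsoc^\dagger(X)$; Lemma~\ref{L:pullback by dominant morphism} then lifts this conclusion back to the original $X$.
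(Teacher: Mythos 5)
Your proposal follows essentially the same route as the paper's proof: pull back along $f$ via Lemma~\ref{L:pullback by dominant morphism}, reduce to the algebraic case, invoke Lemma~\ref{L:apply companions} and Corollary~\ref{C:successive extension of overconvergents} to realize $\calE$ as a successive extension of restrictions of overconvergent objects, close the argument via Lemma~\ref{L:overconvergent extension from curves}, and descend from $\overline{\QQ}_p$-coefficients by Lemma~\ref{L:fully faithful2}. What you add over the paper's quite terse write-up is worth highlighting: you make explicit the preliminary alteration to obtain a good compactification (a hypothesis of Lemma~\ref{L:overconvergent extension from curves} the paper leaves implicit), and you correctly flag that Lemma~\ref{L:overconvergent extension from curves} demands that the restrictions $\calE_j|_{C_i \times_{\overline{X}} X}$ be \emph{docile} (nilpotent residues), whereas the hypothesis of Theorem~\ref{T:extend} only delivers \emph{tameness} on curves. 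This tame-versus-docile gap is a genuine subtlety the paper glosses over.

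Your proposed remedy --- pulling back along a tame cover of $\overline{X}$ ramified along $Z$ --- is the right idea but, as stated, is a bit ad hoc: a priori the local monodromies of the various $\calE^\dagger|_C$ at boundary points could require increasingly ramified covers, and it is not obvious a single cover works. The cleaner way to finish is to observe that the graded pieces in Corollary~\ref{C:successive extension of overconvergents} are restrictions of subquotients of the single global companion $\calF^\dagger \in \FIsoc^\dagger(X) \otimes \overline{\QQ}_p$, which is itself tame (being a companion of the tame objects $\calE^\dagger|_C$). One can then apply semistable reduction for tame overconvergent $F$-isocrystals (as in \cite{kedlaya-semi1}) to $\calF^\dagger$ alone to produce one further alteration after which $\calF^\dagger$ becomes docile; Lemma~\ref{L:log extensions} then forces all of its subquotients, hence all the graded pieces $\calF_i^\dagger|_{C_i}$, and finally (by the inductive extension step) the $\calE_j|_{C_i}$ themselves, to be docile, so Lemma~\ref{L:overconvergent extension from curves} applies as you intended. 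Modulo this tightening, your argument is sound and in fact fills in details the published proof omits.
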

\begin{proof}
By Lemma~\ref{L:pullback by dominant morphism}, we are free to check the claim after pullback along a dominant morphism. We may thus assume that $Y = X$.

By Lemma~\ref{L:fully faithful2}, it suffices to check that $\calE \in \FIsoc^\dagger(X) \otimes \overline{\QQ}_p$. Namely, as an object of $\FIsoc(X) \otimes \overline{\QQ}_p$,
$\calE$ is isomorphic to its conjugates by the action of $\Gal(\overline{\QQ}_p/\QQ_p)$ on coefficients, and Lemma~\ref{L:fully faithful2} implies that these isomorphisms
extend to $\FIsoc^\dagger(X) \otimes \overline{\QQ}_p$.

By Corollary~\ref{C:successive extension of overconvergents}, $\calE$ is a successive extension of 
restrictions of objects of $\FIsoc^\dagger(X) \otimes \overline{\QQ}_p$.  We may then apply Lemma~\ref{L:overconvergent extension from curves} to conclude that
$\calE \in \FIsoc^\dagger(X) \otimes \overline{\QQ}_p$.
\end{proof}

\begin{remark}
While the condition on the dominant morphism $f$ in Theorem~\ref{T:extend} corresponds to an absolutely essential condition in \cite[Theorem~2.5]{drinfeld-deligne}, we see no reason to believe that 
Question~\ref{Q:cut by curves} fails to have an affirmative answer.
Indeed, it may be possible to show that the hypothesis of Question~\ref{Q:cut by curves}  directly implies
the stronger hypothesis of Theorem~\ref{T:extend}; see the discussion below.
\end{remark}

\section{Additional questions}

We record some related questions left unanswered by our work.
The following questions are formulated with an eye towards eliminating the condition on the dominant morphism in Theorem~\ref{T:extend}.
\begin{question}
Suppose that $X$ is a curve with smooth compactification $\overline{X}$.
Suppose that $\calE \in \FIsoc(X)$ is the restriction of an object $\calE^\dagger$ of $\FIsoc^\dagger(X)$. Can one bound the Swan conductor of $\calE^\dagger$ at a point of $\overline{X} \setminus X$ in terms of some data associated to $\calE$?
\end{question}

The following is a $p$-adic analogue of a question of Deligne; see below.
\begin{question} \label{Q:uniform bound on Swan conductor}
Suppose that $X$ admits a good compactification $\overline{X}$ with boundary $Z$.
Suppose that $\calE \in \FIsoc(X)$ has the property that for each smooth curve $C$ in $\overline{X}$, $\calE|_C \in \FIsoc(C \times_{\overline{X}} X)$ is the restriction of an object of 
$\FIsoc^\dagger(C \times_{\overline{X}} X)$ the sum of whose Swan conductors is bounded by some constant times the intersection number $C \cdot Z$. Does it follow that $\calE \in \FIsoc^\dagger(X)$?
\end{question}

\begin{remark}
In \cite[Question~1.2]{esnault-kerz}, it is asked whether a skeleton sheaf on $X$ comes from an $\ell$-adic local system if it obeys a similar uniformity condition on Swan conductors.
An affirmative answer to Question~\ref{Q:uniform bound on Swan conductor} would not directly imply this, because we cannot directly assert that a skeleton sheaf gives rise to a convergent $F$-isocrystal without any hypotheses on wild ramification.
\end{remark}

The following question is motivated by a recent theorem of Tsuzuki which asserts that an overconvergent $F$-isocrystal is determined by its unit root 
\cite{tsuzuki-minimal}.

\begin{question} \label{Q:cut by curves unit root}
Let $\calE \in \FIsoc(X)$ be a unit-root object. Suppose that for every smooth curve $C$ in $X$, $\calE|_C \in \FIsoc(C)$ is the unit root of some object of $\FIsoc^\dagger(C)$. Does it follow that $\calE$ is the unit root of some object of $\FIsoc^\dagger(X)$?
\end{question}

\begin{remark}
It would be natural to try to prove Question~\ref{Q:cut by curves} under an extra tameness hypothesis as in Theorem~\ref{T:extend}.
It might also be necessary to add additional hypotheses, e.g., that the Frobenius traces of the $\calE|_C$ satisfy a uniformity condition as in Question~\ref{Q:uniformly algebraic},
or  that there is a uniform bound on the rank of the object of $\FIsoc^\dagger(C)$ admitting $\calE|_C$ as its unit root.

One tool that might be relevant for this question is Wan's proof of the meromorphicity of unit-root $L$-functions \cite{wan1, wan2, wan3}.
\end{remark}

The following question arises naturally from Remark~\ref{R:uniformly algebraic}. Part (b)
is a variant of Question~\ref{Q:cut by curves unit root}.
\begin{question} \label{Q:uniformly algebraic}
Let $\calE \in \FIsoc(X) \otimes \overline{\QQ}_p$ be algebraic. For $x \in X^\circ$, let $F_x$ be the number field generated by the coefficients of the characteristic polynomial of Frobenius at $x$. 
\begin{enumerate}
\item[(a)]
Does it follow that $\sup_{x \in X^\circ} [F_x:\QQ] < \infty$?
\item[(b)]
Does it further follow that $\calE$ occurs as a subquotient of an object of $\FIsoc^\dagger(X) \otimes \overline{\QQ}_p$?
\end{enumerate}
\end{question}

\begin{question} \label{Q:only use exhaustive sequence}
Does Theorem~\ref{T:extend} remain true if the hypotheses on $C$ only hold for the terms of an exhaustive sequence?
\end{question}

\begin{remark} \label{R:only use exhaustive sequence}
One serious difficulty in Question~\ref{Q:only use exhaustive sequence} is the fact that Deligne's argument for uniform algebraicity (see Remark~\ref{R:uniformly algebraic}) does not as written apply when one only starts with a skeleton sheaf defined on an exhaustive sequence; it may thus be necessary to add the hypothesis that $\calE$ is uniformly algebraic. It is unclear to us whether limiting the family of curves also creates an essential difficulty in Drinfeld's construction;
it may be necessary to strengthen the definition of an exhaustive sequence to incorporate a form of Hilbert irreducibility, such as \cite[Theorem~2.15]{drinfeld-deligne}.
\end{remark}


\begin{thebibliography}{99}

\bibitem{chase}
S.U. Chase, Direct products of modules, \textit{Trans. Amer. Math. Soc.} \textbf{97} (1960), 457--473.

\bibitem{drinfeld-deligne}
V. Drinfeld, On a conjecture of Deligne, \textit{Moscow Math. J.} \textbf{12} (2012),
515--542, 668.

\bibitem{esnault-kerz}
H. Esnault and M. Kerz, A finiteness theorem for Galois representations of function fields over finite fields (after Deligne), \textit{Acta Math. Vietnam.} \textbf{37} (2012), 531--562.

\bibitem{katz-slope}
N. Katz, Slope filtrations of $F$-crystals, Journ\'ees de G\'eom\'etrie Alg\'ebrique
(Rennes, 1978), \textit{Ast\'erisque} \textbf{63} (1979), 113--164.

\bibitem{kedlaya-semi1}
K.S. Kedlaya, Semistable reduction for overconvergent F-isocrystals, I: Unipotence and logarithmic
extensions, \textit{Compos. Math.} \textbf{143} (2007), 1164--1212; errata, \cite{shiho-log}.

\bibitem{kedlaya-book}
K.S. Kedlaya, \textit{$p$-adic Differential Equations}, Cambridge Univ. Press, Cambridge, 2010.

\bibitem{kedlaya-isocrystals}
K.S. Kedlaya, Notes on isocrystals, arXiv:1606.01321v5 (2018).

\bibitem{kedlaya-companions1}
K.S. Kedlaya, Étale and crystalline companions, I, arXiv:1811.00204v4 (2021).

\bibitem{kedlaya-companions2}
K.S. Kedlaya, Étale and crystalline companions, II, arXiv:2008.13053v2 (2021).

\bibitem{meagher}
S. Meagher, A simple proof of Chebotarev's density theorem over finite fields, \textit{Bull. Aust. Math. Soc.} \textbf{98} (2018), 196--202.

\bibitem{poonen}
B. Poonen, Bertini theorems over finite fields, \textit{Annals of Math.} \textbf{160} (2004), 1099--1127.

\bibitem{serre-chebotarev}
J.-P. Serre, Zeta and L functions,
in \textit{Arithmetical Algebraic Geometry (Proc. Conf. Purdue Univ., 1963)}, Harper \& Row, New York, 1965, pp. 82--92.

\bibitem{shiho-log}
A. Shiho, On logarithmic extension of overconvergent isocrystals, \textit{Math. Ann.} \textbf{348} (2010), 467--512.

\bibitem{shiho-curves}
A. Shiho, Cut-by-curves criterion for the overconvergence of $p$-adic differential equations, \textit{Manuscripta Math.} \textbf{132} (2010), 517--537.

\bibitem{tsuzuki-minimal}
N. Tsuzuki,
Minimal slope conjecture of $F$-isocrystals,
arXiv:1910.03871v2 (2019).

\bibitem{wan1}
D. Wan, Dwork's conjecture on unit root zeta functions, \textit{Ann. Math.} \textbf{150} (1999), 867--927.

\bibitem{wan2}
D. Wan, Higher rank case of Dwork's conjecture,  \textit{J. Amer. Math. Soc.} \textbf{13} (2000), 
807--852.

\bibitem{wan3}
D. Wan, Rank one case of Dwork's conjecture, \textit{J. Amer. Math. Soc.} \textbf{13} (2000), 853--908.

\end{thebibliography}
\end{document}